\def\1{{\bf 1}}
\def\End{{\rm End}}
\def\dbar{{\bar\partial}}
\def\R{{\mathbb R}}
\def\C{{\mathbb C}}
\def\Z{{\mathbb Z}}
\def\N{{\mathbb N}}
\def\E{{\mathcal E}}
\def\PM{{\mathcal{PM}}}
\def\codim{{\rm codim\,}}
\def\Ok{{\mathcal O}}
\DeclareMathOperator{\Id}{Id}
\DeclareMathOperator{\supp}{supp}
\DeclareMathOperator{\rank}{rank}
\DeclareMathOperator{\im}{im}
\DeclareMathOperator{\coker}{coker}
\DeclareMathOperator{\res}{res}
\DeclareMathOperator{\Hom}{Hom}
\DeclareMathOperator{\id}{id}
\def\V{{\mathcal{V}}}
\def\U{{\mathcal{U}}}
\def\F{{\mathscr{F}}}
\def\G{{\mathscr{G}}}
\def\elem{{e}}
\newcommand{\sing}{\text{\normalfont  sing }}
\newtheorem{thm}{Theorem}[section]
\newtheorem{lma}[thm]{Lemma}
\newtheorem{prop}[thm]{Proposition}
\theoremstyle{definition}
\newtheorem{df}[thm]{Definition}
\theoremstyle{remark}
\newtheorem{preremark}[thm]{Remark}
\newtheorem{preex}[thm]{Example}
\newenvironment{remark}{\begin{preremark}}{\end{preremark}}
\newenvironment{ex}{\begin{preex}}{\end{preex}}
\numberwithin{equation}{section}
\begin{document}

\title[Global Chern currents]{Global Chern currents of coherent sheaves and Baum Bott currents}

\date{\today}

\author[Lucas Kaufmann \& Richard L\"ark\"ang \& Elizabeth Wulcan]
{Lucas Kaufmann \& Richard L\"ark\"ang \& Elizabeth Wulcan}

\address{L.\ Kaufmann \\ Institut Denis Poisson, CNRS, Universit\' e d'Orl\' eans, Rue de Chartres, B.P. 6759, 45067, Orl\' eans cedex 2, FRANCE}

\email{lucas.kaufmann@univ-orleans.fr}

\address{R.\ L\"ark\"ang, E.\ Wulcan \\ Department of Mathematical Sciences\\Chalmers University of Technology and the University of Gothenburg\\SE-412 96
Gothenburg\\SWEDEN}

\email{larkang@chalmers.se, wulcan@chalmers.se}

\begin{abstract}
We provide global extensions of previous results about representations of
characteristic classes of coherent analytic sheaves and of Baum-Bott
residues of holomorphic foliations. We show in the first case that they can be represented by currents with support on the support of the given
coherent analytic sheaf,  and  in the second case,  by currents with support on the singular set of the foliation.
In previous works, we have constructed such representatives provided global resolutions
of the appropriate sheaves existed. In this article, we show that the definition of
Chern classes of Green and the associated techniques, which work on arbitrary complex manifolds without any assumption
on the existence of global resolutions,
may be combined with our previous constructions to yield the desired representatives.

We also prove a transgression formula for such representatives, which is new even in the case when global resolutions exist.
More precisely, the representatives depend on local resolutions of the sheaf, and on choices of metrics and connections on these bundles,
i.e., the currents for two different choices differ by a current of the form $dN$, where $N$ is an explicit current, which in the first case above has support on the support of the
given coherent analytic sheaf, and in the second case above has support on the singular set of the foliation.
\end{abstract}

\maketitle

\section{Introduction}

Let $\G$ be a coherent analytic sheaf on a complex manifold $M$ of dimension $n$.
In case $M$ has the so-called resolution property, i.e., if any coherent analytic sheaf on $M$ admits a resolution  by a complex $(E,\varphi)$
of holomorphic vector bundles of finite length, say $N$, then there is a well-defined notion of Chern forms associated to $\G$, which may be
defined as the alternating product of the Chern and Segre forms associated to connections $D_k$ of the involved vector bundles $E_k$, $k=0,\dots,N$.
This provides an extension of the definition of Chern forms from vector bundles to coherent sheaves, since in case $\G$ is a vector bundle,
and the resolution $E$ is taken to consist of simply $\G$ in degree $0$, the two notions of Chern forms coincide. Furthermore, at the level of
de Rham cohomology classes, the corresponding Chern class so defined is the unique extension of the definition of Chern class from vector bundles
to coherent sheaves which is multiplicative on short exact sequences of vector bundles.

We recall that for example projective manifolds satisfy the resolution property, but also that, as
shown by Voisin, \cite{Voisin}, compact Kähler manifolds do not necessarily satisfy the resolution property. Furthermore, if $M$ is non-compact, the resolution property
always fails, as one may for any discrete sequence $(x_k) \in M$ construct a sheaf $\G$
for which one must have, say, that $\rank E^{x_k}_0 \geq k$ for any resolution
$(E^{x_k},\varphi^{x_k})$ of $\G_{x_k}$.

However,  for any complex manifold $M$ one may find an open cover $\U = (U_\alpha)_{\alpha \in I}$ of $M$ such that
$\G|_{U_\alpha}$ admits a resolution $(E^\alpha,\varphi^\alpha)$ of holomorphic vector bundles
of finite length $N \leq n$. If one equips each complex of vector bundles $E^\alpha$
with a connection $D^\alpha$ (i.e., a tuple of connections $(D^\alpha_N,\dots,D^\alpha_0)$,
where $D^\alpha_k$ is a connection on $E^\alpha_k$), and we fix a partition of unity $(\psi_\alpha)$
subordinate to $\U$, Green showed in \cite{green} that one may associate to this data a Chern form
$c_\ell( (D^\alpha)_{\alpha \in I} )$, which is a $d$-closed smooth form on $M$.  The corresponding de Rham cohomology class only depends on $\G$, and we may denote it as $c_\ell(\G) :=  [c_\ell( (D^\alpha)_{\alpha \in I} )] \in H^{2\ell}_{dR}(M)$. Here $0\leq \ell \leq n$.
In case $M$ has the resolution property, and one takes a single global resolution $(E,\varphi)$
equipped with a connection $D$ and lets
$(E^\alpha,\varphi^\alpha) = (E|_{U_\alpha},\varphi|_{U_\alpha})$ and $D^\alpha = D|_{U_\alpha}$,
then $c_\ell( (D^\alpha)_{\alpha \in I})$ coincides with the Chern form mentioned in the first paragraph.

More generally, for any homogeneous symmetric polynomial $\Phi \in \C[z_1,\ldots,z_n]$,
there is an associated characteristic form $\Phi( (D^\alpha)_{\alpha \in I} )$,
which is a smooth closed form, whose de Rham cohomology class only depends on $\mathcal{G}$.
Recall that the Chern forms are the forms associated to the elementary symmetric
polynomials $\elem_\ell$, $\ell=0,1,\dots,n$.

A coherent sheaf $\G$ will generically be a vector bundle. Let $Z$ denote the
analytic subset of $M$ where $\G$ is not a vector bundle, that is,  the smallest proper analytic subset $Z$ of $M$ such that $\G|_{M \setminus Z}$ is locally free.
In \cite{LW:chern-currents-sheaves} and \cite{KLW}, we showed that associated to
certain coherent sheaves $\G$, \emph{if $\G$ admits a global resolution $(E,\varphi)$},
then one may construct a family of connections $(\widehat{D}^\epsilon)_{\epsilon >0}$ on $E$
such that associated to homogeneous symmetric polynomial $\Phi \in \C[z_1,\ldots,z_n]$
of appropriate degree, the sequence of smooth forms $\Phi(\widehat{D^\epsilon})$ admits a limit
\begin{equation}
    R^\Phi := \lim_{\epsilon \to 0} \Phi(\widehat{D^\epsilon}),
\end{equation}
where $R^\Phi$ is a closed current on $M$ whose de Rham cohomology class represents $\Phi(\G)$.
The key property of the current $R^\Phi$ is that it ``localizes'' $\Phi(\G)$ at the singularities of $\G$,
meaning that $R^\Phi$ has support on $Z$.
In \cite{LW:chern-currents-sheaves}, this is done for sheaves $\G$ whose support $Z$ has codimension $\geq 1$,
and $\Phi$ is of any degree $\geq 1$.
Explicit descriptions are also obtained for $R^\Phi$ when $\deg \Phi \leq \codim Z$.
In \cite{KLW}, this is done for $\G = N\F$ being the normal sheaf of a (singular) holomorphic
foliation $\F$ of $M$, provided the degree of $\Phi$ is larger than the corank of $\F$.
For the compact connected components $Z'$ of $Z$, this also yields representations
of the corresponding so-called Baum-Bott residue of $\F$ along $Z'$, as introduced in
\cite{baum-bott}.

In this article, we show that the constructions from \cite{LW:chern-currents-sheaves} and \cite{KLW} may be generalized to arbitrary complex manifolds $M$,
i.e., that we may drop the assumption of $\G$ having a global resolution $(E,\varphi)$.

Recall that the \emph{(fundamental) cycle} of $\G$ is the cycle
\begin{equation}\label{eq:fundcycle}
    [\G] = \sum_k m_k [Z_k]
\end{equation}
(considered as an integration current), where $Z_k$ are the
irreducible components of $\supp \G$, and $m_k$ is the \emph{geometric multiplicity}
of $Z_k$ in $\G$, see \emph{e.g.}\ \cite[Chapter~1.5]{Fulton}.

Our global generalization of the results in \cite{LW:chern-currents-sheaves} is the following.

\begin{thm} \label{thm:sheavesBasic}
    Let $M$ be a complex manifold of dimension $n$, let $\G$ be a coherent analytic sheaf on $M$,
    such that $Z = \supp \G$ has codimension $p \geq 1$ and let $\Phi$ be a symmetric
    polynomial $\Phi \in \C[z_1,\ldots,z_n]$ of degree $\ell$ with $1 \leq \ell \leq n$.
    Let $(U_\alpha)_{\alpha\in I}$ be a Stein open cover of $M$, and let $N \in \mathbb{N}$ be such that each $\G|_{U_\alpha}$ admits a finite resolution
    $(E^\alpha,\varphi^\alpha)$ of holomorphic vector bundles of length $\leq N$.
    Assume that each $E^\alpha$ is equipped with Hermitian metrics and a connection
    $D^\alpha$ of type $(1,0)$.
    Then, for $\epsilon >0$,  there are explicit connections $\widehat{D}^{\alpha,\epsilon}$ on $E^\alpha$ such that
    the limit
    \begin{equation*}
        R^\Phi := \lim_{\epsilon \to 0} \Phi( (\widehat{D}^{\alpha,\epsilon} )_{\alpha \in I})
    \end{equation*}
    exists as a current, which represents $\Phi(\mathcal{G})$, and which has support on $Z$.
    In addition, if $Z$ has pure codimension $p$, then
    \begin{equation} \label{eq:fundCycle}
        R^{\elem_p} = (-1)^{p-1}(p-1)! [\G].
    \end{equation}
    If $\ell < p$, then
    \begin{equation} \label{eq:vanish1}
        R^{\elem_\ell} = 0,
    \end{equation}
    and if $\ell_1+\cdots+\ell_m \leq p$, where $m \geq 2$, then
    \begin{equation} \label{eq:vanish2}
        R^{\elem_{\ell_1} \cdots \elem_{\ell_m}} = 0.
    \end{equation}
\end{thm}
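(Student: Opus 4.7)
The plan is to combine the local construction of \cite{LW:chern-currents-sheaves} with Green's patching procedure from \cite{green}. On each $U_\alpha$, define $\widehat{D}^{\alpha,\epsilon}$ by applying the construction of \cite{LW:chern-currents-sheaves} to the local data $(E^\alpha,\varphi^\alpha,D^\alpha)$. That construction produces, for each $\alpha$, a family of smooth closed forms $\Phi(\widehat{D}^{\alpha,\epsilon})$ on $U_\alpha$ converging as $\epsilon \to 0$ to a closed current $R^{\Phi,\alpha}$ which is supported on $Z \cap U_\alpha$, represents $\Phi(\G|_{U_\alpha})$, and for which the local analogues of \eqref{eq:fundCycle}--\eqref{eq:vanish2} hold.

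For each fixed $\epsilon > 0$, Green's construction then yields a smooth closed form $\Phi\bigl((\widehat{D}^{\alpha,\epsilon})_{\alpha\in I}\bigr)$ on $M$ representing $\Phi(\G)$, written as a locally finite sum of terms indexed by tuples $(\alpha_0,\dots,\alpha_k)$. The term associated to such a tuple is supported on $U_{\alpha_0}\cap\cdots\cap U_{\alpha_k}$ and has the shape $\psi_{\alpha_0}\, d\psi_{\alpha_1}\wedge\cdots\wedge d\psi_{\alpha_k}\wedge T^\epsilon_{\alpha_0\cdots\alpha_k}$, where $T^\epsilon_{\alpha_0\cdots\alpha_k}$ is a transgression form built from an interpolation between the $\widehat{D}^{\alpha_i,\epsilon}$ transported to a common auxiliary complex (e.g., an iterated mapping cone of quasi-isomorphisms between the $E^{\alpha_i}$) over the intersection.

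The core of the proof is to pass to the limit $\epsilon \to 0$ term by term and show that each term has a well-defined current limit supported on $Z$. Convergence should be obtained by extending the analysis of \cite{LW:chern-currents-sheaves} from a single chart to the mixed/interpolating connections on multi-fold intersections: the matrix-valued quantities appearing in $T^\epsilon_{\alpha_0\cdots\alpha_k}$ are of the same kind that are controlled in \cite{LW:chern-currents-sheaves}, and $T^\epsilon_{\alpha_0\cdots\alpha_k}$ is polynomial in them. For the support statement, one must check that along the entire $t$-interpolation between the $\widehat{D}^{\alpha_i,\epsilon}$, the characteristic forms also localize to $Z$ as $\epsilon \to 0$; this is to follow from the fact that the construction in \cite{LW:chern-currents-sheaves} is designed so that an entire family of connections, not merely the endpoint $\widehat{D}^{\alpha,\epsilon}$, produces characteristic forms vanishing off $Z$ in the limit.

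Finally, \eqref{eq:fundCycle}--\eqref{eq:vanish2} reduce to their local counterparts from \cite{LW:chern-currents-sheaves} by using $\sum_\alpha \psi_\alpha = 1$ to see that the $(\alpha_0)$-terms reassemble into the full local expressions near a smooth point of $Z$, while the higher-fold contributions, which carry the factors $d\psi_{\alpha_1}\wedge\cdots$, vanish on the transverse top-degree part where the cycle computation takes place. The main obstacle is the one highlighted in the third paragraph: verifying that Green's transgression terms at all multi-fold intersections actually localize on $Z$ in the limit---this is the genuinely new step beyond the single-chart analysis of \cite{LW:chern-currents-sheaves}, and it is what forces the construction of $\widehat{D}^{\alpha,\epsilon}$ to be compatible with Green's patching, not just locally well-behaved.
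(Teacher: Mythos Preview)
Your overall strategy---local construction from \cite{LW:chern-currents-sheaves} on each $U_\alpha$, then Green's simplicial patching---is exactly the paper's approach, and you correctly single out the multi-fold transgression terms as the new difficulty. For convergence the paper makes this precise by observing that every connection matrix entering the simplicial construction has entries of the form $A+\chi_\epsilon B$ with $A$ smooth and $B$ almost semi-meromorphic; polynomial combinations then expand as in \eqref{gulogulo}, and Proposition~\ref{prop:globalAsmRegularization} gives the limit. For support the mechanism is that the unregularised $\tilde D^\alpha$ are \emph{compatible} with the local resolutions outside $\Sigma_\epsilon$, so Lemma~\ref{lemma:exact-curvature} forces the interpolated characteristic forms to vanish there (this is Lemma~\ref{mercredi}). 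Your descriptions of both steps are in the right spirit but stop short of naming these mechanisms.

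The genuine gap is your argument for \eqref{eq:fundCycle}--\eqref{eq:vanish2}. You assert that the higher-fold Green contributions ``vanish on the transverse top-degree part,'' but this does not follow: a $k$-fold cochain ($k\geq 1$) has bidegree $(\ell+r,s)$ with $r+s=\ell-k$ (Lemma~\ref{lma:checkPhiBidegree}), and after applying $(d\psi)^k$ its antiholomorphic degree can reach $s+k\leq \ell$, so when $\ell=p$ the dimension principle does not kill it. The paper instead first proves a transgression formula (Theorem~\ref{pizza}): for any two sets of choices, $R^\Phi_{(2)}-R^\Phi_{(1)}=dN^\Phi$ with $N^\Phi$ pseudomeromorphic, supported on $Z$, and of antiholomorphic degree at most $\ell-1$. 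When $\ell\leq p$ the dimension principle then forces $N^\Phi=0$, so $R^\Phi$ is \emph{independent of all choices}---cover, resolutions, metrics, connections, partition of unity. One may therefore pass locally to a cover by a single chart with a single resolution, where the identities reduce verbatim to \cite{LW:chern-currents-sheaves}. Your direct approach would need this independence step (or an equivalent device lowering the antiholomorphic degree by one) to succeed; without it the reduction to the local formulae is not available.
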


The connections $\widehat{D}^{\alpha,\epsilon}$ depend on the choice of connections $D^\alpha$,
as well as Hermitian metrics on $E^\alpha$ (where the connection and the metrics do
not need to be related to each other) and a partition of unity of the cover.
In general, the resulting current $R^\Phi$ will also depend on these choices.
However, as follows by \eqref{eq:fundCycle},\eqref{eq:vanish1} and \eqref{eq:vanish2},
$R^\Phi$ is independent of all these choices if $\deg \Phi \leq \codim \supp \G$.
In general, we prove a transgression formula which says that they are independent of these choices up
to a current of the form $dN^\Phi$, where $N^\Phi$ is a current with support on $Z$, see Theorem~\ref{pizza}.
This last part is new, also in the case when $\G$ admits a global resolution.

In particular, at the level of cohomology,
\begin{equation}\label{janakippo}
c_p(\mathcal \G) = (-1)^{p-1}(p-1)![\mathcal \G],
  \end{equation}
where now the right hand side should be interpreted as a de Rham class.
If for example $\G$ is the pushforward of a vector bundle from a submanifold,
the fact that \eqref{janakippo} holds is a well-known consequence of the Grothendieck-Riemann-Roch
theorem, \cite{OBTT}, \emph{cf.}\ \cite[Examples~15.2.16 and~15.1.2]{Fulton}.

Just as in \cite{LW:chern-currents-sheaves}, in case $\G$ has codimension $p$,
but not necessarily \emph{pure} codimension $p$, then Theorem~\ref{thm:sheavesBasic}
still holds if we  replace the first equation by
\begin{equation} \label{orenstrumpa}
        R^{e_p} = (-1)^{p-1}(p-1)![\G]_p,
\end{equation}
where $[\G]_p$ denotes the part of $[\G]$ of codimension $p$,
\textit{i.e.} in \eqref{eq:fundcycle}, one only sums over the components $Z_i$ of codimension $p$.

\bigskip

Let now $\F$ be a holomorphic foliation of rank $\kappa$ on $M$ and denote by $N \F$ its
normal sheaf, see Section ~\ref{subsec:prelim-foliations} for the
definitions. If $Z'$ is a compact connected component of the singular set of $\F$, $\sing \F$,
and $\Phi \in \C[z_1,\ldots,z_n]$ is a homogeneous symmetric polynomial of degree $\ell$
with $n-\kappa < \ell \leq n$, then Baum-Bott, \cite{baum-bott}, defined a class
$\res^\Phi(\F;Z')$, an object in $H_{2n-2\ell}(Z',\C)$, which may also be represented as a
de Rham cohomology class on $M$ with compact support, whose representatives  have support in arbitrarily small neighborhoods of $Z'$.
Provided $\sing N\F$ is compact, these classes together represent $\Phi(N\F)$, i.e.,
\begin{equation*}
 \sum_{Z' \subset \sing \F} \res^\Phi(\F;Z') = \Phi (N \F) \quad \text{ in } \quad H^{2\ell}(M,\C),
\end{equation*}
where the sum is over all the connected components of $Z'$. This should be seen as
a localization formula for $\Phi (N \F)$ around the singularities of $\F$.

We may assume that $M$ has a Stein open cover $(U_\alpha)$ such that
for each $\alpha$, $N\F|_{U_\alpha}$ admits a resolution
\begin{equation} \label{anebrun}
    0 \to E_N^\alpha \stackrel{\varphi^\alpha_N}{\longrightarrow} E_{N-1}^\alpha \stackrel{\varphi^\alpha_{N-1}}{\longrightarrow} \dots \stackrel{\varphi_2}{\longrightarrow} E_1^\alpha \stackrel{\varphi^\alpha_1}{\longrightarrow} E_0^\alpha = TM
    \stackrel{\varphi^\alpha_0}{\longrightarrow} N\F \to 0.
\end{equation}
By the syzygy theorem, one may assume that $N \leq n$.

The global version of the results in \cite{KLW} that we obtain is the following.

\begin{thm} \label{thm:foliationsBasic}
 Let $M$ be a complex manifold of dimension $n$,  let $\F$ be a
 holomorphic foliation of rank $\kappa$ on $M$, and let $\Phi\in
 \C[z_1,\ldots, z_n]$ be a homogeneous symmetric polynomial of degree
 $\ell$ with $n-\kappa<\ell\leq n$.
 Let $(U_\alpha)$ be a Stein open cover of $M$, and let $N \in \mathbb{N}$ be such that each
 $N\F|_{U_\alpha}$ admits resolutions of the form \eqref{anebrun}.
 Assume that each $TM$ is equipped with a torsion free connection $D^{TM}$ of type $(1,0)$,
 and that each $E^\alpha_k$ is equipped with a connection $D^\alpha_k$ for $k=1,\dots,N$.
Then, for $\epsilon >0$ there are explicit connections $\widehat{D}^{\alpha,\epsilon}$ on $E^\alpha$ such that the limit
\begin{equation*}
    R^\Phi := \lim_{\epsilon \to 0} \Phi( (\widehat{D}^{\alpha,\epsilon} )_{\alpha \in I})
\end{equation*}
exists as a current, which represents $\Phi(N\F)$, and which has support on $\sing \F$.

Write $R^\Phi = \sum R^\Phi_{Z'}$, where the sum runs over all connected
components of $\sing N\F$, and $R^\Phi_{Z'}$ has support on $Z'$.
If $Z'$ is a compact connected component of $\sing \F$, then $R^\Phi_{Z'}$
represents the Baum-Bott residue $\res^\Phi(\F;Z') \in H_{2n-2\ell}(Z',\C)$.
Furthermore, if $\ell = \codim Z'$, then $R^\Phi_{Z'}$
only depends on $\F$, and if $\ell < \codim Z'$, then $R^\Phi_{Z'} = 0$.
\end{thm}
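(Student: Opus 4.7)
The plan is to combine the single-resolution construction of \cite{KLW} with Green's partition-of-unity calculus from \cite{green}. On each Stein open set $U_\alpha$, one can apply the KLW recipe verbatim: starting from the resolution \eqref{anebrun}, the torsion-free connection $D^{TM}$, the connections $D^\alpha_k$ on $E^\alpha_k$ for $k \geq 1$, and the chosen Hermitian metrics, \cite{KLW} produce an explicit one-parameter family of connections $\widehat{D}^{\alpha,\epsilon}$ on $E^\alpha$ for which the smooth form $\Phi(\widehat{D}^{\alpha,\epsilon})$ on $U_\alpha$ converges, as $\epsilon \to 0$, to a $d$-closed current $R^{\Phi,\alpha}$ supported in $\sing \F \cap U_\alpha$, representing $\Phi(N\F|_{U_\alpha})$ in cohomology, and satisfying the local analogues of the residue identification and vanishing claims needed at the end of the theorem.

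The next step is to feed these families into Green's formalism. For each fixed $\epsilon > 0$, the formula of \cite{green} expresses the global closed form $\Phi((\widehat{D}^{\alpha,\epsilon})_{\alpha \in I})$ as a \v{C}ech-type sum, indexed by flags $(\alpha_0,\dots,\alpha_k)$ in the nerve of $\U$, of products of derivatives of the partition-of-unity factors with Bott--Chern transgression polynomials in the curvatures $\Omega(\widehat{D}^{\alpha_j,\epsilon})$ and in the difference one-forms $\widehat{D}^{\alpha_j,\epsilon} - \widehat{D}^{\alpha_0,\epsilon}$ on the overlap $U_{\alpha_0} \cap \dots \cap U_{\alpha_k}$. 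Passing to the limit $\epsilon \to 0$ then reduces to showing that each overlap transgression term has a current limit. Since each such term is a universal polynomial in the curvature and connection data for which \cite{KLW} already established the required analytic behavior for a single family $\widehat{D}^{\alpha,\epsilon}$, the same analysis applied simultaneously to all $\widehat{D}^{\alpha_j,\epsilon}$ yields convergence on the overlap, and the limit is supported in $\sing \F$ there, using that off $\sing \F$ all involved complexes are quasi-isomorphic resolutions of the locally free sheaf $N\F$ and the transgression integrands extend smoothly.

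The cohomological identification $[R^\Phi] = \Phi(N\F)$ is then inherited from the cohomological correctness of Green's formula at each fixed $\epsilon$, and the support statement follows from the overlap analysis just sketched. The decomposition $R^\Phi = \sum R^\Phi_{Z'}$ over connected components of $\sing N\F$ is automatic; to identify $R^\Phi_{Z'}$ with $\res^\Phi(\F;Z')$ for compact $Z'$, I would restrict to a Stein tubular neighborhood $V$ of $Z'$ where $N\F|_V$ admits a global resolution, apply the single-resolution identification from \cite{KLW} there, and use the paper's transgression formula (\emph{cf.}\ the discussion following Theorem~\ref{thm:sheavesBasic}) to absorb the change in local choices. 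The vanishing for $\ell < \codim Z'$ and $\F$-only dependence for $\ell = \codim Z'$ then follow from the local analogues of \eqref{eq:vanish1} and \eqref{eq:vanish2} in such a neighborhood. The main obstacle will be the convergence analysis on overlaps in the second paragraph: verifying that Green's Bott--Chern transgression integrals, evaluated on the singular KLW families $\widehat{D}^{\alpha_j,\epsilon}$, converge to currents supported in $\sing \F$ rather than merely in the (generally larger) union of loci where the individual complexes $E^\alpha$ fail to be exact off degree $0$, which is exactly where the torsion-freeness of $D^{TM}$ and the foliation-specific estimates of \cite{KLW} must propagate through Green's nerve-theoretic gluing.
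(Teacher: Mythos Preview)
Your overall strategy matches the paper's: build the local $\widehat{D}^{\alpha,\epsilon}$ via the KLW recipe, glue through Green's simplicial machinery, and pass to the limit. The paper carries this out by observing that every term in Green's \v{C}ech--de Rham expression has the structured form $\alpha + \sum_k \chi_\epsilon^k \beta_k + \sum_k \chi_\epsilon^{k-1} d\chi_\epsilon \beta_k'$ with $\beta_k,\beta_k'$ almost semi-meromorphic, so convergence and pseudomeromorphicity follow from the ASM calculus (Proposition~\ref{prop:globalAsmRegularization}); your ``same analysis applied simultaneously'' gestures at this but does not name the mechanism.

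There are two concrete gaps. First, your identification of $R^\Phi_{Z'}$ with $\res^\Phi(\F;Z')$ relies on restricting to a ``Stein tubular neighborhood $V$ of $Z'$ where $N\F|_V$ admits a global resolution.'' For a compact component $Z'$ of positive dimension no such Stein neighborhood exists (Stein manifolds contain no positive-dimensional compact analytic subsets), so this step fails in general. The paper avoids this by comparing $\Phi(\widehat{D}^\epsilon)$ not to the KLW current but to Baum--Bott's original form $\Phi(\widehat{D})$, built from a \emph{real analytic} locally free resolution of $N\F\otimes\mathcal{A}$ on a neighborhood of $Z'$ (which does exist, \cite[Proposition~6.3]{baum-bott}); the comparison is via the transgression Lemma~\ref{hamburger}, which is stated to apply to smooth or real analytic resolutions (Remark~\ref{rem:smoothOrRealAnalytic}).

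Second, the vanishing for $\ell<\codim Z'$ and the $\F$-only dependence for $\ell=\codim Z'$ do not follow from analogues of \eqref{eq:vanish1}--\eqref{eq:vanish2}: those concern $\codim\supp\G$, whereas here $\supp N\F = M$. The paper's argument is different: by Theorem~\ref{pizza} the difference of two choices is $dN^\Phi$ with $N^\Phi$ pseudomeromorphic of bidegree $(\ast,\leq \ell-1)$ supported on $Z'$, and the dimension principle forces $N^\Phi=0$ when $\codim Z' \geq \ell$. This is where the $(1,0)$ hypothesis on the connections is actually used, which your sketch does not invoke.
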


As in the previous theorem, the connections and resulting currents $R^\Phi$ and $R^\Phi_{Z'}$ in
general depend on the choice of connections $D^\alpha$, as well as Hermitian metrics on
$E^\alpha$ and a partition of unity of the cover. However, we prove a transgression formula which says
that they are also independent of these choices up to a current of the form $dN^\Phi$, where $N^\Phi$ is a current with
support on $\sing \F$, see Theorem~\ref{pizza}.
Also in this case, this last extends the previous results when $\G$ admits a global resolution.
In \cite{KLW}, we only proved such a result when the resolution was fixed, but possibly
equipped with different sets of connections and metrics.

\bigskip

Chern classes of coherent sheaves, without the assumption of the existence of a global locally free resolution,
has been studied in various recent papers, including \cite{Gri,Qiang,BSW,Wu}, in finer cohomology theories than
de Rham cohomology, more precisely in (rational or complex) Bott-Chern and Deligne cohomology.
In the present paper, our focus has been to find explicit representatives of Chern classes of a coherent sheaf
with support on the support of the sheaf (or in its singular support in the foliation case),  a type of result which as far as we can tell, none of the above
mentioned works seems to consider. Our methods do not seem to yield representatives in the
finer cohomology theories mentioned above, as for example our construction is based on Chern forms of connections
that are not Chern connections of a hermitian metric.

\bigskip

At the final stages of the preparation of the present article appeared a preprint by Han, \cite{Han}, with similar results as our Theorem~\ref{thm:sheavesBasic},
i.e., the main result states that there exists representations of characteristic classes of $\F$ as pseudomeromorphic currents with support on $Z = \supp \F$.
Both in Han's work and in the present article,  the approach is based on the local construction from \cite{LW:chern-currents-sheaves}.  In \cite{Han} however, the globalization  builds on a theory of global resolutions of coherent sheaves by cohesive modules due to Block, \cite{Block}, and the associated characteristic forms as introduced by Qiang,
\cite{Qiang}, cf., also \cite{BSW},  while here we adopt Green's approach.



\section{Coherent sheaves, holomorphic foliations, vector bundle complexes and characteristic classes}\label{sec:prelim}

Throughout the paper $M$ will be a complex manifold of dimension $n$.

\subsection{Holomorphic foliations} \label{subsec:prelim-foliations}

A \textit{holomorphic foliation}  $\F$ on $M$ is the data of a coherent analytic
subsheaf $T \F$ of $TM$, called the \textit{tangent sheaf} of $\F$, such
that $T \F$ is \emph{involutive}, that is, for any pair of local sections
$u,v$ of $T \F$, the Lie bracket $[u,v]$ belongs to $T \F$.
The \textit{normal sheaf} of $\F$ is $N \F:=TM / T\F$.

The generic rank of $T \F$ is called the \textit{rank of $\F$}. Note
that $N\F$ is a coherent analytic sheaf. The \textit{singular set} of
$\F$ is, by definition, the smallest subset $\sing \F \subset M$
outside of which $N \F$ is locally free. We say that $\F$ is
\textit{regular} if $\sing \F$ is empty.  By definition, the
restriction of  $N\F$ to $M \setminus \sing \F$ defines a regular foliation
whose normal sheaf is a holomorphic vector bundle of rank $n-\kappa$,
where $\kappa$ is the rank of $\F$. Moreover, by Frobenius Theorem, over
$M \setminus \sing \F$, the bundle $T\F$ is locally given by vectors
tangent to the fibers of a (local) holomorphic submersion.

\smallskip

It is standard in the literature to also assume that a foliation $\F$ is
\emph{saturated} or \emph{full}, that is, $N \F:=TM / T\F$ is assumed to
be torsion free. In that case, one avoids certain ``artificial'' singularities
which for example could be caused by having a generator of $T\F$ where all the entries
share a common factor. We do not make use of this condition in our proofs, so in this
article, we do not add this assumption.

\subsection{Vector bundle complexes, connections, and superstructure}\label{superduper}
Consider a vector bundle complex
\begin{equation} \label{eq:vbComplex}
     0 \to E_N \xrightarrow[]{\varphi_N}  \cdots\xrightarrow[]{\varphi_1}  E_0
\end{equation}
   over $M$.
Following \cite{andersson-wulcan:ens}, we equip $E := \bigoplus_{k=0}^N E_k$ with a superstructure by letting $E^+ =  \bigoplus_{2k} E_k$ (resp.\ $E^- =  \bigoplus_{2k + 1} E_k$) be the even (resp.\ odd) parts of $E = E^+ \oplus E^-$. This is a $\Z \slash 2 \Z$-grading that simplifies some of the formulas and computations. An endomorphism $\varphi \in \End(E)$ is even (resp.\ odd) if it preserves (resp.\ switches) the $\pm$-components.

The superstructure affects how form-valued endomorphisms act.
Assume that $\alpha=\omega\otimes \gamma$ is a form-valued section of $\End (E)$, where $\omega$ is a smooth form of degree $m$ and $\gamma$ is a section of
$\Hom(E_\ell,E_k)$.
We let $\deg_f \alpha = m$ and $\deg_e \alpha = k-\ell$ denote
the \emph{form} and \emph{endomorphism degrees}, respectively, of $\alpha$.
The \emph{total degree} is $\deg \alpha = \deg_f \alpha + \deg_e \alpha$.
If $\beta$ is a form-valued section of $E$, i.e., $\beta=\eta\otimes\xi$, where
$\eta$ is a smooth form, and $\xi$ is a section of $E$, both homogeneous in degree, then
the we define the action of $\alpha$ on $\beta$ by
\begin{equation}\label{thyme}
    \alpha(\beta) := (-1)^{(\deg_e \alpha)(\deg_f \beta)} \omega \wedge \eta \otimes \gamma(\xi).
\end{equation}
If furthermore, $\alpha'=\omega'\otimes \gamma'$, where $\gamma'$ is a holomorphic
section of $\End (E)$, and $\omega'$ is a smooth form, both homogeneous in degree, then
we define
\begin{equation}\label{rosemary}
    \alpha \alpha' := (-1)^{(\deg_e \alpha)(\deg_f \alpha')} \omega\wedge \omega' \otimes \gamma \circ \gamma'.
  \end{equation}

  \smallskip

Assume that each $E_k$ is equipped with a connection $D_k$. Then there
is an induced connection $D_E$ on $E$, that in turn induces a connection $D_{\End}$ on $\End (E)$,
that takes the superstructure into account, which is defined by
\begin{equation*}
D_\End\alpha = D_E\circ \alpha - (-1)^{\deg \alpha}\alpha \circ D_E.
  \end{equation*}
We will often denote $D_{\End}$ simply by $D$.

Similarly to \cite{baum-bott} we say that the collection of connections $(D_N, \ldots, D_0)$
is \emph{compatible} with \eqref{eq:vbComplex} if $D\varphi_k = 0$ for $k=1,\dots,N$.
If $E_{-1} := \coker \varphi_1$ is a vector bundle, one may consider the augmented complex
\begin{equation} \label{eq:augmentedEcomplex}
     0 \to E_N \xrightarrow[]{\varphi_N}  \cdots\xrightarrow[]{\varphi_1}  E_0\xrightarrow[]{\varphi_0} E_{-1} \to 0.
\end{equation}
Then, one may say that the collection of connections $(D_N, \ldots, D_{-1})$ is
compatible with \eqref{eq:augmentedEcomplex} if $D\varphi_k = 0$ for $k=0,\dots,N$,
cf., i.e., \cite[Section~2.2]{KLW} and \cite[Definition~4.16]{baum-bott}.
Note that if \eqref{eq:vbComplex} is equipped with connections $(D_N,\ldots,D_0)$
compatible with \eqref{eq:vbComplex}, then there is an induced connection
$D_{-1}$ on $E_{-1}$ which makes $(D_N,\ldots,D_{-1})$ compatible with
\eqref{eq:augmentedEcomplex}, and conversely,
such a set of connections makes $(D_N,\ldots,D_0)$ compatible with \eqref{eq:vbComplex},
and $D_{-1}$ to be the induced connection on $E_{-1}$.

\subsection{Characteristic classes and forms}\label{texas}

We begin by briefly reviewing the definition of characteristic classes and
forms of coherent sheaves which admit global locally free resolutions.
In Section~\ref{sec:globalChern}, we will discuss how the definition may be
extended to the case when global resolutions do not exist. Most of the material
in this section can be found in \cite[Sections~1 and ~4]{baum-bott}.

Let $E$ be a smooth complex vector bundle of rank $r$ over $M$ equipped with a connection $D$.
We define $2j$-forms $\elem_j(D)$ associated to $(E,D)$ through
\begin{equation}\label{butterfly}
\det \left (I + \frac{i}{2\pi}\Theta(D) \right) = 1 + \elem_1(D) + \cdots + \elem_n(D).
\end{equation}
Note that $\elem_j(D)$ is just the $j$th Chern form of $(E,D)$.

Consider now a complex $(E,\varphi)$ of smooth vector bundles
\begin{equation}\label{snow2}
  0 \to E_N \to \cdots \to E_0,
  \end{equation}
and assume that the vector bundles in \eqref{snow2} are equipped with connections
$D_0, \ldots, D_N$, and let $\Theta(D_k)$, $k=0 ,\ldots, N$, be the corresponding
curvature forms. We will sometimes use short-hand notation, $D=(D_N,\dots,D_0)$
and say that $E$ is equipped with a connection $D$.
Generalizing \eqref{butterfly} we let $\elem_j(D)$ be the $2 j$-form
defined by
\begin{equation} \label{eq:mixed-chern}
\prod_{k=0}^N \left( \det \left[I + \frac{i}{2\pi}\Theta(D_k) \right] \right)^{(-1)^k}  =
\elem(D) = 1 + \elem_1(D) + \cdots + \elem_n(D).
\end{equation}

Let $\Phi \in \C[z_1,\ldots,z_n]$ be a homogeneous symmetric polynomial of degree
$\ell \leq n$. In case $\Phi=\elem_j$, we have defined $\elem_j(D)$ above.
In general, there is a unique polynomial $\widehat \Phi$, such that
$$
\Phi(z_1,\ldots, z_n)=\widehat \Phi\big (\elem_1(z), \ldots, \elem_\ell(z)\big ),
$$
and $\Phi(D)$ is then be defined as
\begin{equation}\label{schmetterling}
  \Phi(D) =  \widehat \Phi\big (\elem_1(D), \ldots, \elem_\ell(D)\big ).
\end{equation}

One motivation for the definition \eqref{schmetterling} at the level of forms is the following result:
\begin{lma}[\cite{baum-bott} - Lemma 4.22]  \label{lemma:exact-curvature}
  Assume that the complex \eqref{eq:vbComplex} is pointwise exact and that $E_1 = \coker \varphi_1$
  is a vector bundle over some open set $U \subset M$. If $D=(D_N, \ldots, D_0)$ is compatible
  with $(E,\varphi)$, and $D_{-1}$ is the induced connection on $E_{-1}$, then
 \begin{equation*}
\Phi(D) = \Phi(D_{-1}) \quad \text{on} \quad U.
\end{equation*}
\end{lma}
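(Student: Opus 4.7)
The proof reduces immediately to showing the identity $e(D) = e(D_{-1})$ of \emph{total} Chern forms on $U$, since by \eqref{schmetterling} any $\Phi(D)$ is a polynomial in $e_1(D),\dots,e_\ell(D)$. Unwinding \eqref{eq:mixed-chern}, the identity to verify is
$$\prod_{k=0}^N \det\!\Bigl[I + \tfrac{i}{2\pi}\Theta(D_k)\Bigr]^{(-1)^k} = \det\!\Bigl[I + \tfrac{i}{2\pi}\Theta(D_{-1})\Bigr] \quad \text{on } U.$$

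First, pointwise exactness of \eqref{eq:vbComplex} together with the hypothesis that $\coker\varphi_1$ is a vector bundle ensures that on $U$ each $K_k := \ker\varphi_k = \im\varphi_{k+1}$ is a smooth subbundle of $E_k$, with $K_N = 0$ and $K_{-1} := E_{-1}$. This yields short exact sequences of bundles
$$0 \to K_k \hookrightarrow E_k \xrightarrow{\varphi_k} K_{k-1} \to 0, \qquad k=0,1,\dots,N,$$
where for $k=0$ the map $\varphi_0$ is the augmentation $E_0 \to E_{-1}$.

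Next, I would use the compatibility $D\varphi_k = 0$, which says that each $\varphi_k$ is parallel. Hence the subbundle $K_{k-1} = \varphi_k(E_k) \subset E_{k-1}$ is preserved by parallel transport of $D_{k-1}$, so $D_{k-1}$ restricts to a connection $D'_{k-1}$ on $K_{k-1}$. Parallelism of $\varphi_k$ further identifies the quotient connection on $E_k/K_k$ induced by $D_k$ with $D'_{k-1}$ via the isomorphism $E_k/K_k \xrightarrow{\bar\varphi_k} K_{k-1}$, and the case $k=0$ is exactly the definition of $D_{-1}$ as the induced connection on $E_0/\ker\varphi_0 = E_{-1}$. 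In a local frame of $E_k$ adapted to $K_k \subset E_k$, the connection matrix, and hence the curvature $\Theta(D_k)$, is block upper triangular with diagonal blocks equal to $\Theta(D'_k)$ and $\Theta(D'_{k-1})$. Block-multiplicativity of the determinant then gives
$$e(D_k) = e(D'_k)\, e(D'_{k-1}) \quad \text{on } U,$$
with the conventions $e(D'_N) = 1$ (since $K_N = 0$) and $e(D'_{-1}) = e(D_{-1})$.

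Substituting into the alternating product, I would telescope:
$$\prod_{k=0}^N e(D_k)^{(-1)^k} = \prod_{k=0}^N e(D'_k)^{(-1)^k} \cdot \prod_{k=0}^N e(D'_{k-1})^{(-1)^k}.$$
Shifting $k \mapsto k+1$ in the second product flips the sign of each exponent, so the factors $e(D'_k)^{(-1)^k}$ with $k=0,\dots,N-1$ cancel between the two products; the factor $e(D'_N) = 1$ disposes of the top, leaving exactly $e(D'_{-1}) = e(D_{-1})$, as required. The main technical point is the careful verification that $D\varphi_k = 0$ gives both the parallelism of $K_{k-1} \subset E_{k-1}$ under $D_{k-1}$ and the identification of the quotient connection on $E_k/K_k$ with $D'_{k-1}$, together with the sign bookkeeping from the superstructure conventions \eqref{thyme}--\eqref{rosemary}; once this is in place, the block-triangular structure of the curvatures and the telescoping are purely formal.
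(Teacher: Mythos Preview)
The paper does not supply its own proof of this lemma; it is stated as \cite[Lemma~4.22]{baum-bott} and used as a black box. Your argument is correct and is essentially the standard proof (and the one given in Baum--Bott): split the exact complex into short exact sequences $0 \to K_k \to E_k \to K_{k-1} \to 0$, use compatibility to see that each $D_k$ is block upper triangular with respect to this filtration, deduce $e(D_k) = e(D'_k)\,e(D'_{k-1})$ from multiplicativity of $\det$ on block-triangular matrices, and telescope. Your caveat about the superstructure signs is the right one: once you unwind \eqref{thyme}, $D\varphi_k = 0$ reduces to the naive intertwining relation $D_{k-1}\circ\varphi_k = \varphi_k\circ D_k$ on sections, so the subbundle $K_{k-1}$ is indeed $D_{k-1}$-invariant and the quotient connection on $E_k/K_k$ is carried to $D'_{k-1}$ under $\bar\varphi_k$ without any stray sign.
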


\medskip

Next, let $\G$ be a coherent analytic sheaf over $M$ and assume that $\G$ admits a resolution
of the form \eqref{snow2}, i.e.,
\begin{equation}\label{snow}
  0 \to E_N \to \cdots \to E_0 \to \G \to 0.
\end{equation}
is exact, and equip $E$ with some connection $D$. The $j$th Chern class of $\G$ is defined as
$c_j(\G) = [e_j(D)] \in H^{2j}_{dR}(M)$, where $H^{\bullet}_{dR}(M)$ denotes de Rham cohomology
(of smooth forms or currents).

In case $M$ has the \emph{resolution property}, i.e., if any coherent
sheaf $\G$ admits a resolution \eqref{snow}, then it is classical that $[e_j(D)]$ is a well-defined
class, i.e., it is independent of the choice of resolution and connection, and it is
the unique extension of the definition of Chern classes from vector bundles to coherent sheaves
which is multiplicative on short exact sequences, i.e., if
$
    0 \to \G'' \to \G \to \G' \to 0
$
is an exact sequence of coherent analytic sheaves, then
$
    c(\G) = c(\G'') c(\G').
$

It follows more generally by the construction of Green of Chern classes of coherent sheaves that we discuss later, that $c_j(\G) = [e_j(D)]$ is well-defined even if $M$ does not have the resolution property, provided that a resolution \eqref{snow} exists for $\G$.

If $\Phi \in \C[z_1,\ldots,z_n]$, $\G$, $E$ and $D$ are as above,
then $\Phi(\G)\in H^{2\ell}(M, \C)$ is defined as $\Phi(\G) = [\Phi(D)]$.

\subsection{Elementary sequences}\label{apple}

A vector bundle complex $(E, \varphi)$ is said to be an
\emph{elementary sequence} in the vector bundles $E^1, \ldots, E^L$ if
it is a direct sum of trivial complexes
\begin{equation} \label{trivial}
    (0 \to E^j \stackrel{\id}{\to} E^j \to 0)[m],
\end{equation}
where $j\in \{1, \ldots, L\}$, $m \in \Z$, and $[m]$ denotes the complex shifted in degree $m$ steps.

Note that the pullback by a smooth map of an elementary sequence is again an elementary sequence.
Assume that each $E^j$ is equipped with a connection $D^j$ and let $D$
be the induced connection on $(E, \varphi)$. Note that then $D$ is
compatible with $(E, \varphi)$.

\begin{lma}
    \label{lma:phiElementarySequence}
    Let $(E,\varphi)$ be complex of vector bundles as in \eqref{snow}, and
    let $(F,\eta)$ be an elementary sequence in some vector bundles $F^1,\dots,F^L$.
    Assume that $(E,\varphi)$ is equipped with a connection $D^E$,
    and that $(F,\psi)$ is equipped with a connection $D^F$ induced by a set of
    connections $D^{F^1},\dots,D^{F^L}$ on $F^1,\dots,F^L$.
    Let $\Phi\in \C[z_1, \ldots, z_n]$ be a homogeneous symmetric polynomial.
    Then
    \begin{equation} \label{eq:elementarySequenceInvariant}
        \Phi(D^E\oplus D^F) =
        \Phi(D^E).
    \end{equation}
\end{lma}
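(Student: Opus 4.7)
The plan is to reduce the statement to the assertion that the total Chern polynomial of an elementary sequence is trivial, i.e., $\elem(D^F) = 1$, and then appeal to the definition \eqref{schmetterling} together with multiplicativity under direct sums.

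First, I would observe that the curvature of a direct sum connection is block diagonal: $\Theta(D^E \oplus D^F) = \Theta(D^E) \oplus \Theta(D^F)$. Hence the defining determinant in \eqref{eq:mixed-chern} factorizes at each degree $k$, and therefore
\begin{equation*}
\elem(D^E \oplus D^F) = \elem(D^E) \cdot \elem(D^F).
\end{equation*}
Once I have shown $\elem(D^F) = 1$, the definition \eqref{schmetterling} of $\Phi(D)$ via the universal polynomial $\widehat{\Phi}$ applied to $\elem_1(D), \dots, \elem_\ell(D)$ immediately yields \eqref{eq:elementarySequenceInvariant}.

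To prove $\elem(D^F) = 1$, I would use again the multiplicativity under direct sums: since $(F, \eta)$ is a direct sum of trivial complexes of the form $(0 \to F^j \stackrel{\id}{\to} F^j \to 0)[m]$, it suffices to check that each trivial summand contributes $1$. Fix one such trivial summand. It consists of two copies of $F^j$ placed in consecutive degrees $m$ and $m+1$, both equipped with the same connection $D^{F^j}$ (so that the differential $\id$ is parallel). Their contributions to the alternating product \eqref{eq:mixed-chern} are
\begin{equation*}
\Bigl(\det\bigl[I + \tfrac{i}{2\pi}\Theta(D^{F^j})\bigr]\Bigr)^{(-1)^m} \cdot \Bigl(\det\bigl[I + \tfrac{i}{2\pi}\Theta(D^{F^j})\bigr]\Bigr)^{(-1)^{m+1}} = 1,
\end{equation*}
since the two exponents are opposite. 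Hence $\elem$ of each trivial summand is $1$, and therefore $\elem(D^F) = 1$.

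There is really no substantive obstacle; the proof is a direct unwinding of the definitions. The only point requiring a tiny bit of care is to make sure that in the trivial complex the two copies of $F^j$ are equipped with the \emph{same} connection $D^{F^j}$, which is built into the hypothesis that $D^F$ is induced from connections $D^{F^1}, \dots, D^{F^L}$ on the building blocks; and to note the block-diagonality of curvature under direct sums, which makes both $\elem$ and hence every $\Phi$ multiplicative on orthogonal decompositions of complexes.
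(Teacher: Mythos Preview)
Your proof is correct and follows essentially the same architecture as the paper's: both use block-diagonality of curvature to get $\elem(D^E\oplus D^F)=\elem(D^E)\,\elem(D^F)$, then reduce to $\elem(D^F)=1$, and conclude for general $\Phi$ via \eqref{schmetterling}. The one point of divergence is how $\elem(D^F)=1$ is obtained: you compute it directly by pairing the two copies of $F^j$ in each trivial summand and observing that their contributions to the alternating product \eqref{eq:mixed-chern} cancel, whereas the paper instead invokes Lemma~\ref{lemma:exact-curvature} (Baum--Bott's Lemma~4.22), noting that $(F,\eta)$ is an exact complex equipped with a compatible connection and hence resolves the zero sheaf. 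Your argument is more self-contained and elementary for this particular step; the paper's version is shorter given that Lemma~\ref{lemma:exact-curvature} is already on hand and needed elsewhere.
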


\begin{proof}
It follows by \eqref{eq:mixed-chern} and the multiplicative property of the determinant on block
diagonal matrices that
\begin{equation*}
\elem(D^E\oplus D^F)
    =
\elem(D^E) \elem(D^F).
\end{equation*}
Hence, \eqref{eq:elementarySequenceInvariant} holds for $\Phi$ being any of the $e_\ell$,
$\ell=1,\dots,n$ by Lemma~\ref{lemma:exact-curvature}, since $(F,\eta)$ is a resolution of
the sheaf $0$ and is equipped with a compatible connection.
In then follows that \eqref{eq:elementarySequenceInvariant} holds for general $\Phi$
by \eqref{schmetterling}.
\end{proof}

\subsection{Interpolation of connections}

Consider a locally free resolution $(E,\varphi)$ of $\mathcal{G}$ of length $N$ as in \eqref{snow}.
Assume that $E$ is equipped with a set of connections $D^0,\ldots,D^p$, i.e.,
$D^j=(D_N^j,\ldots,D_0^j)$, where $D_k^j$ is a connection on $E_k$ for
$k=0,\ldots,N$, $j=0,\ldots,p$.
Let
\begin{equation}\label{snowfall}
  \Delta_p = \big \{ (t_0,\dots,t_p) \in [0,1]^p \mid t_0+\dots+t_p =
  1\big \}
\end{equation}
be the standard $p$-simplex, with the orientation determined by the form $dt_1\wedge\dots\wedge dt_n$,
and let $\pi$ denote the projection $M\otimes \Delta_p \to M$.
Define a connection $D$ on $\pi^* E$ by
\begin{equation*}
D_k = \sum_{j=0}^p t_j \pi^* D^j_k.
\end{equation*}
Let $\Phi\in \C[z_1, \ldots, z_n]$ be a homogeneous symmetric
polynomial of degree $\ell$.
We define
\begin{equation} \label{eq:PhiD0Dp}
    \Phi(D^0,\ldots,D^p) := \pi_* \Phi(D).
\end{equation}
By \cite[equation~(II.8.2)]{suwa:book},
\begin{equation} \label{eq:suwa-interpolation}
    \sum_{k=0}^p (-1)^k \Phi(D^0,\ldots,\widehat{D^k},\ldots,D^p) + (-1)^p d\Phi(D^0,\ldots,D^p) = 0,
\end{equation}
where $\widehat{D^k}$ denotes that $D^k$ is removed.

\subsection{The \v Cech-de Rham complex}

Given a locally finite open cover $\U= (U_\alpha)_{\alpha\in I}$ of $M$, let $N\U$ denote
the nerve of $\U$. Throughout we will tacitly assume that all covers
are locally finite.
For $\Delta=(\alpha_0,\ldots, \alpha_p)\in N\U$, let
\[
  U_\Delta = U_{\alpha_0, \ldots,
    \alpha_p}:=U_{\alpha_0}\cap\cdots\cap U_{\alpha_p}.
\]

Let
\[
  \check C^\bullet(\U, \E^\bullet) = \bigoplus_{p,q\geq 0} \check
  C^p(\U, \E^q)
\]
be the (non-alternating) \v{C}ech-de Rham complex,
where $\check C^p(\U, \E^\bullet)$ denote the $q$-form-valued \v Cech $p$-cochains
\[
\gamma=(\gamma_{\alpha_0,\ldots, \alpha_p}) \in \prod_{(\alpha_0,\ldots, \alpha_p) \in I^{p+1}}
\E^q(U_{\alpha_0,\ldots, \alpha_p}),
\]
and $\E^q$ denotes the sheaf of smooth $q$-forms.
We say that
$\gamma \in \check  C^p(\U, \E^q)$ has \v{C}ech degree $p$, form degree $q$, and total degree
$p+q$.
If each $\gamma_{\alpha_0,\ldots, \alpha_p}$ has bidegree $(r,s)$ we
will sometimes use the notation $\gamma\in \check C^p(\U, \E^{r,s})$.
Let $\delta=\delta^p: \check C^p(\U, \E^\bullet) \to \check C^{p+1}(\U,
\E^\bullet)$ be the \v Cech differential
\[
  (\delta \gamma)_{\alpha_0, \ldots, \alpha_{p+1}} =
  \sum_{0\leq  j\leq p+1}(-1)^j \gamma_{\alpha_0, \ldots \widehat \alpha_j
  \ldots, \alpha_{p+1}}|_{U_{\alpha_0, \ldots, \alpha_{p+1}}},
\]
and let $d=d^p: \check C^p(\U, \E^q) \to \check C^{p}(\U,\E^{q+1})$ be the exterior derivative
\[
    (d \gamma)_{\alpha_0,\ldots,\alpha_p} = (-1)^{p}d\gamma_{\alpha_0,\ldots,\alpha_p}.
\]
Let
\[
    \nabla=d+\delta
\]
be the (total) differential of the \v{C}ech-de Rham complex.

Recall that $H^k(\check{C}^\bullet(\U,\E^\bullet)) \cong
H^k_{dR}(M)$ and that one may construct an explicit morphism of complexes
$\Psi : \check{C}^\bullet(\U,\E^\bullet) \to \E^\bullet(M)$ which induces this isomorphism, cf., i.e.,
\cite[Proposition~9.5]{BT}\footnote{In \cite{BT}, they consider the alternating \v{C}ech-de Rham complex. However, the proof of \cite[Proposition~9.5]{BT} works
equally well for the non-alternating \v{C}ech-de Rham complex considered in this article.}. Assume that $(\psi_\alpha)_{\alpha\in I}$ is a partition of unity subordinate to
$\U$, and define $\psi : \check{C}^p(\U,\E^\bullet) \to \check{C}^{p-1}(\U,\E^\bullet)$ for $p \geq 1$ by
\begin{equation*}
    (\psi \gamma)_{\alpha_0,\dots,\alpha_{p-1}} := -\sum_{\alpha \in I} \psi_{\alpha}\gamma_{\alpha_0,\dots,\alpha_{p-1},\alpha}.
\end{equation*}
If $\gamma \in \check{C}^\bullet(\U,\E^\bullet)$, and $\gamma_p$ denotes the part of $\gamma$ of \v{C}ech degree $p$, then we let
\begin{equation}\label{cream'}
    \Psi'\gamma := \sum_{p \geq 0} (d\psi)^p \gamma_p + \psi (d\psi)^p (D\gamma)_{p+1},
\end{equation}
where $(d\psi)^p$ denotes the composition $d\circ \psi$ repeated $p$ times.
Note that if $\gamma$ is a cocycle, then
\begin{equation}\label{cream2}
    \Psi'\gamma = \sum_{p \geq 0} (d\psi)^p \gamma_p.
\end{equation}
The cochain $\Psi'\gamma$ defined by \eqref{cream'} lies in $\check{C}^0(\U,\E^\bullet)$, but as shown in
\cite[Proposition~9.5]{BT}, it is in fact $\delta$-closed, so it induces a global form in $\E^\bullet(M)$.
We denote this global form by $\Psi(\gamma)$, and note that it may thus either be defined
by $\Psi(\gamma)|_{U_\alpha} = \Psi'(\gamma)_\alpha$, or by
\begin{equation} \label{cream}
    \Psi\gamma := \sum \psi_\alpha (\Psi'\gamma)_\alpha.
\end{equation}

Assume that $\V=(V_\beta)_{\beta\in J}$ is an open cover which refines $\U$,
i.e., that there is a \emph{refinement map} $\rho: J\to I$ such that $V_\beta\subset
U_{\rho(\beta)}$ for all $\beta\in J$.
Then there
are induced morphisms
$\rho=\rho^p :\check C^p(\U, \E^\bullet) \to \check C^p(\V, \E^\bullet)$,
defined by
\[
  (\rho \gamma)_{\beta_0,\ldots, \beta_p} =
  \gamma_{\rho(\beta_0),\ldots, \rho(\beta_p)}|_{V_{\beta_0, \ldots,
      \beta_p}}.
  \]
  Assume that $\rho_1$ and $\rho_2$ are two refinement maps and let
  $h=h^p: \check C^p(\U, \E^\bullet) \to \check C^{p-1}(\V,
  \E^\bullet)$ be the homotopy operator defined by
  \begin{equation}\label{semla}
    (h \gamma)_{\beta_0,\ldots, \beta_{p-1}} =
    \sum_{k=0}^{p-1}
    (-1)^k \gamma_{\rho_1(\beta_0),\ldots, \rho_1(\beta_k), \rho_2(\beta_k),\ldots, \rho_2(\beta_{p-1})}
    |_{V_{\beta_0, \ldots,\beta_{p-1}}}.
\end{equation}
A calculation yields that
\begin{equation}\label{almond}
  \nabla h+h\nabla=\rho_2-\rho_1.
\end{equation}

\section{Baum-Bott theory}\label{bbres}

The theory of Baum-Bott residues was developed in \cite{baum-bott},
extending the theory of rank one foliations in \cite{baum-bott-mero}
to general foliations.
The main outcome of Baum-Bott's theory is the fact that
high degree characteristic classes $\Phi(N\F)$ of $N \F$ localize around $\sing \F$, cf.\ the introduction.
This is a consequence of a vanishing theorem for the normal bundle of a regular foliation due to the existence of special connections.
Recall that a connection is said to be of \emph{type $(1, 0)$}, or a \emph{$(1,0)$-connection}, if its $(0, 1)$-part equals $\dbar$.

\begin{df} \label{def:basic-connection} (\cite {baum-bott} - Definition 3.24) Let $\F$ be a regular foliation on $M$ and let $\varphi_0: TM \to  N\F$ be the canonical surjection. A connection $D$ on $N \F$ is \textit{basic} if it is of type $(1,0)$ and
\begin{equation} \label{eq:basicCondition}
i(u)D( \varphi_0 v) =  \varphi_0 [u,v]
\end{equation}
for any smooth sections $u$ of $T \F$ and $v$ of $TM$.
\end{df}

It is not hard to see that basic connections always exist, see
\cite[\S 3]{baum-bott}.

\begin{thm}[Baum-Bott's Vanishing theorem,
  \cite{baum-bott} - Proposition ~3.27] \label{thm:vanishing}
  Let $\F$ be a regular foliation of rank $\kappa$ on a complex
  manifold $M$ of dimension $n$.
  If $D$ is a basic connection on $N \F$, then $\Phi(D) = 0$
 for every homogeneous symmetric polynomial $\Phi \in \C[z_1,\ldots,z_n]$ of degree $\ell$ with $n-\kappa < \ell \leq n$.
\end{thm}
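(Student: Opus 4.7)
The plan is to argue locally: in holomorphic coordinates adapted to $\F$, I will show that each entry of the curvature $\Theta(D)$ is a sum of products $\eta\wedge\alpha$, where $\eta$ is a $1$-form and $\alpha$ belongs to the rank-$(n-\kappa)$ conormal subbundle $A$ of $T\F$ (the $(1,0)$-forms annihilating $T\F$). A rank count on $\wedge^\ell A$ will then force $\Phi(D)$ to vanish for $\ell>n-\kappa$.

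Concretely, fix $p\in M$. Since $\F$ is regular, Frobenius provides holomorphic coordinates $(z^1,\dots,z^n)$ on a neighborhood $U$ of $p$ with $T\F|_U=\langle \partial_{z^1},\dots,\partial_{z^\kappa}\rangle$; then $A|_U = \langle dz^{\kappa+1},\dots,dz^n\rangle$ and the sections $\elem_l := \varphi_0 \partial_{z^l}$, $l>\kappa$, give a holomorphic trivialization of $N\F|_U$. Writing $D\elem_l = \sum_{m>\kappa} \omega_l^m \otimes \elem_m$, the assumption that $D$ is of type $(1,0)$, together with the holomorphicity of the frame $(\elem_l)$, forces each $\omega_l^m$ to be a pure $(1,0)$-form. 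Applying the basic condition \eqref{eq:basicCondition} with $u = \partial_{z^j}$, $j\leq\kappa$, and $v=\partial_{z^l}$, $l>\kappa$, and using $[\partial_{z^j},\partial_{z^l}]=0$, yields $\omega_l^m(\partial_{z^j})=0$, so $\omega_l^m = \sum_{j>\kappa} f_{l,j}^m\, dz^j$ and hence $\omega \in \Gamma(U, A\otimes \End(N\F))$. Because each $dz^j$ with $j>\kappa$ is closed, $d\omega_l^m = \sum_{j>\kappa} df_{l,j}^m\wedge dz^j$ is a sum of $1$-forms wedged with elements of $A$, while $(\omega\wedge\omega)_l^m \in A\wedge A$. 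Therefore every entry of $\Theta(D) = d\omega + \omega\wedge\omega$ is a sum of terms $\eta\wedge\alpha$ with $\eta$ a $1$-form and $\alpha\in A$.

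Unwinding \eqref{eq:mixed-chern} and \eqref{schmetterling}, $\Phi(D)$ is a polynomial of degree $\ell$ in the entries of the matrix $\Theta(D)$, so each of its monomials is a wedge of exactly $\ell$ entries of $\Theta(D)$. By the previous step, such a wedge expands as a sum of terms of the form $(\eta_1\wedge\cdots\wedge\eta_\ell)\wedge(\alpha_1\wedge\cdots\wedge\alpha_\ell)$ with each $\alpha_i\in A$. Since $\rank_\C A = n-\kappa$, the factor $\alpha_1\wedge\cdots\wedge\alpha_\ell$ vanishes whenever $\ell>n-\kappa$, so every such term is zero and $\Phi(D)=0$ on $U$; as $p$ was arbitrary, $\Phi(D)=0$ on $M$. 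The main obstacle is the careful local identification that forces $\omega$ into $A\otimes\End(N\F)$: this requires the basic condition, the $(1,0)$-type assumption on $D$, and the holomorphic trivialization of $N\F$ to all interact correctly. Once this is secured, the wedge bookkeeping and the rank count on $\wedge^\ell A$ are essentially formal.
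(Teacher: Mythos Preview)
Your proof is correct and follows essentially the same approach as the original Baum--Bott argument that the paper invokes (see the proof of Lemma~\ref{bottom}, which summarizes \cite[Proposition~3.27]{baum-bott}): choose Frobenius coordinates, use the basic condition together with the $(1,0)$-type to show the connection matrix lies in the ideal generated by $dz^{\kappa+1},\dots,dz^n$, deduce the same for the curvature, and conclude by the pigeonhole argument on $\wedge^\ell$ of an $(n-\kappa)$-dimensional space. One minor remark: since $N\F$ here is a single vector bundle rather than a complex, the relevant definition is \eqref{butterfly} rather than \eqref{eq:mixed-chern}, but your conclusion that $\Phi(D)$ is a degree-$\ell$ polynomial in the entries of $\Theta(D)$ is of course unaffected.
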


\subsection{Baum-Bott residues}\label{lucia}

In the presence of singularities, one cannot work directly with connections on $N\F$, so the use of suitable resolutions is necessary.

Let $Z'$ be a compact connected component of $\sing \F$. Then one can
find an open neighborhood $U$ of $Z'$ in $M$ such that $U\cap \sing
\F=Z'$ and $Z'$ is a deformation retract of $U$, and a locally free resolution
$(E,\varphi)$ of $N\F$ of $\mathcal A$-modules on $U$, where $\mathcal A$ denotes
the sheaf of germs of real analytic functions, cf.\ \cite[Proposition~6.3]{baum-bott}.

Given a basic connection $D_{-1}$ on $N\F|_{M\setminus \sing \F}$,
and a compact neighborhood $\Sigma \subset U$ of $Z'$,
as is shown in \cite[Lemma~5.26]{baum-bott}, one may construct a connection
$D$ on $E$ which is compatible with $(E,\varphi)$ on $U\setminus \Sigma$,
and such that $D_{-1}$ is the connection on $N\F|_{U\setminus \Sigma}$
induced by $D$.
If $\Phi\in\C[z_1,\ldots, z_n]$ is a homogeneous symmetric polynomial of
degree $\ell$ with $n-\kappa<\ell\leq n$, then it follows by Theorem~ \ref{thm:vanishing}
and Lemma ~\ref{lemma:exact-curvature} that $\Phi(D)$ vanishes outside of $\Sigma$,
and in particular, it has compact support in $U$.
Since $Z'$ is a deformation retract of $U$, the homology groups of $U$ and $Z'$
are naturally isomorphic. Composing this isomorphism with the Poincaré duality
$H^{2\ell}_c(U,\C) \simeq H_{2n-2\ell}(U,\C)$ yields an isomorphism
$H^{2\ell}_c(U,\C) \simeq H_{2n-2\ell}(Z',\C)$.
Now $\res^\Phi(\F;Z') \in H_{2n-2\ell}(Z',\C)$ is defined as the class of $\Phi(D)$ in
$H^{2 \ell}_c(U,\C)$ under this isomorphism.
It is proved in \cite[Sections~5,6,7]{baum-bott} that the class of $\Phi(D)$ is independent
of the choices of $U,(E,\varphi),D$, and only depends on the local behaviour of
$\F$ around $Z'$.

\section{Residue currents}\label{stream}

We say that a function $\chi:\R_{\geq 0}\to \R_{\geq 0}$ is a \emph{smooth approximant
  of the characteristic function} $\chi_{[1,\infty)}$ of the interval
  $[1,\infty)$ and write
  \[
    \chi \sim \chi_{[1,\infty)}
  \]
 if $\chi$ is smooth, increasing and $\chi(t) \equiv 0$ for $t \ll 1$ and
 $\chi(t) \equiv 1$ for $t \gg 1$.

\subsection{Pseudomeromorphic currents}\label{hemma}
Let $f$ be a (generically nonvanishing) holomorphic function on a
(connected)
complex manifold $M$.
Herrera and Lieberman, \cite{HL}, proved that the \emph{principal value}
\begin{equation*}
\lim_{\epsilon\to 0}\int_{|f|^2>\epsilon}\frac{\xi}{f}
\end{equation*}
exists for test forms $\xi$ and defines a current, that we with a slight abuse of notation denote by $1/f$.
It follows that $\dbar(1/f)$ is a current with support on the zero set
$Z(f)$ of $f$; such a current is called a \emph{residue current}.
Assume that $\chi\sim\chi_{[1,\infty)}$ and that
 $s$ is a generically
nonvanishing holomorphic section of a Hermitian vector bundle such that
$Z(f)\subseteq \{s = 0\}$.
Then
\begin{equation*}
  \frac{1}{f}=\lim_{\epsilon\to 0} \frac{\chi(|s|^2/\epsilon)}f ~~~~~ \text{
    and } ~~~~~
  \dbar\left (\frac{1}{f}\right )=\lim_{\epsilon\to 0} \frac{\dbar\chi(|s|^2/\epsilon)}f,
  \end{equation*}
  see, e.g., \cite{andersson-wulcan:asm}.
  In particular, the limits are independent of $\chi$ and $s$. Note
  that $\chi(|s|^2/\epsilon)$ vanishes identically in a neighborhood
  of $\{s=0\}$, so that $\chi(|s|^2/\epsilon)/f$ and $\dbar\chi(|s|^2/\epsilon)/f$ are smooth.
More generally, if $f$ is a generically non-vanishing holomorphic section of a line bundle $L\to M$ and $\omega$ is an $L$-valued smooth form, then the current $\omega \slash f$ is well-defined. Such currents are called \textit{semi-meromorphic}, cf.\ \cite[Section~4]{andersson-wulcan:asm}.

In \cite{andersson-wulcan:crelle} the sheaf  $\PM_M$ of {\it pseudomeromorphic currents} on
$M$ was introduced in order to obtain a coherent approach to questions about residue and
principal value currents; it consists of direct images under holomorphic mappings of products
of test forms and currents like $1/f$ and $\dbar(1/f)$, and also suitable products of such
currents. See, e.g., \cite[Section~2.1]{andersson-wulcan:asm} for a precise definition.
The sheaf $\PM_M$ is closed under $\partial$ and $\dbar$ and under multiplication by smooth forms.
Pseudomeromorphic currents have a geometric nature, similar to closed positive (or normal) currents.
For instance, the {\it dimension principle} states that if the pseudomeromorphic current
$\mu$ has bidegree $(*,p)$ and support on a variety of codimension strictly larger than
$p$, then $\mu$ vanishes.

The sheaf $\PM_M$ admits natural restrictions to constructible subsets
of $M$.
In particular, if $W$ is a subvariety of the open subset $U\subseteq M$,
and $s$ is a holomorphic section of a Hermitian vector bundle such that $\{s = 0\} = W$, then the restriction to
$U\setminus W$ of a pseudomeromorphic current $\mu$ on $U$ is
the pseudomeromorphic current on $U$ defined by
\[
    \1_{U\setminus W} \mu := \lim_{\epsilon \to 0} \chi(|s|^2/\epsilon) \mu|_{U},
\]
where $\chi \sim \chi_{[1,\infty)}$.
It follows that
    \[\1_{W} \mu := \mu -
    \1_{U\setminus W} \mu\]
  has support on $W$.
  These definitions are independent of the choice of $s$ and $\chi$.

\subsection{Almost semi-meromorphic currents}

We refer to \cite[Section~4]{andersson-wulcan:asm} for details of the
results mentioned in this section.

We say that a  current $a$ is \textit{almost semi-meromorphic} in $M$, $a\in ASM(M)$,  if there exists a modification $\pi: M' \to M$ and a semi-meromorphic current $\omega \slash f$  on $M'$ such that $a = \pi_* \big( \omega \slash f \big)$. More generally, if $E$ is a vector bundle over $M$, an $E$-valued current $a$ is \textit{almost semi-meromorphic} on $M$ if $ a = \pi_* \big( \omega \slash f \big)$, where $\pi$ is as above, $\omega$ is a smooth form with values in $L \otimes \pi^*E$ and $f$ is a holomorphic section of a line bundle $L\to M'$.

Clearly almost semi-meromorphic currents are pseudomeromorphic. In particular, if $a\in ASM(M)$, then $\partial a$ and $\dbar a$ are pseudomeromorphic currents on $M$.

\begin{lma}[Proposition~4.16 in \cite{andersson-wulcan:asm}]\label{gold}
  Assume that $a\in ASM (M)$ is smooth in $M\setminus W$, where $W$ is
  subvariety of $M$.  $\partial a\in ASM (M)$ and $\1_{M\setminus W}\dbar a\in ASM(M)$.
\end{lma}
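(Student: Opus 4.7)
The plan is to work with an explicit representation $a = \pi_*(\omega/f)$, where $\pi : M' \to M$ is a modification, $\omega$ is a smooth form with values in $L \otimes \pi^* E$, and $f$ is a holomorphic section of a line bundle $L \to M'$. Since $\pi$ is proper, both $\partial$ and $\dbar$ commute with $\pi_*$, so the problem reduces to studying $\partial(\omega/f)$ and $\dbar(\omega/f)$ on $M'$ and pushing the results forward.

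As a first reduction, I would apply Hironaka's resolution of singularities to obtain a further modification $\sigma : M'' \to M'$ so that $Z(\sigma^* f)$ is a simple normal crossings divisor on $M''$. Using the smoothness of $a$ on $M\setminus W$, one should further arrange that $Z(\sigma^* f) \subseteq (\pi\circ\sigma)^{-1}(W)$: any pole locus of $\sigma^*(\omega/f)$ which projects outside $W$ must cancel under pushforward (otherwise $a$ would be singular there), so by adjusting the representative and blowing up further, all remaining singularities can be moved over $W$. This step is the technical heart of the argument.

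Once this is done, the computations are local and straightforward. In normal-crossings coordinates where $f = u\, z_1^{a_1}\cdots z_k^{a_k}$ with $u$ nowhere vanishing, the Leibniz rule gives
\begin{equation*}
\partial(\omega/f) = (\partial\omega)/f - \omega \wedge (\partial f)/f^2,
\end{equation*}
and both summands are semi-meromorphic (a smooth form divided by a holomorphic section of a power of $L$). Hence $\partial(\omega/f) \in ASM(M')$ and $\partial a = \pi_*\partial(\omega/f) \in ASM(M)$. For the $\dbar$ statement, one writes
\begin{equation*}
\dbar(\omega/f) = (\dbar\omega)/f + (-1)^{\deg\omega}\omega \wedge \dbar(1/f);
\end{equation*}
the first summand is semi-meromorphic, while the second is a residue current supported on $Z(f) \subseteq \pi^{-1}(W)$. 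Its pushforward has support in $W$ and is therefore annihilated by $\1_{M\setminus W}$. The remaining pushforward $\pi_*((\dbar\omega)/f)$ is almost semi-meromorphic, and applying $\1_{M\setminus W}$, which on a pseudomeromorphic current extracts the part coming from the almost semi-meromorphic extension of $a|_{M\setminus W}$, yields $\1_{M\setminus W}\dbar a \in ASM(M)$.

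The main obstacle I foresee is the preliminary reduction $Z(f) \subseteq \pi^{-1}(W)$: extracting usable information from the smoothness of $a$ on $M\setminus W$ down to the level of the chosen representative $\omega/f$ on $M'$ requires a careful cancellation-and-blow-up argument, since a priori the singular locus of $\omega/f$ on $M'$ may be much larger than $\pi^{-1}(W)$. Without this reduction, the residue contribution in $\dbar(\omega/f)$ could persist outside $\pi^{-1}(W)$ and obstruct the second part of the statement.
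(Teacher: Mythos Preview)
The paper does not prove this lemma: it is quoted verbatim as Proposition~4.16 from \cite{andersson-wulcan:asm} and used as a black box. There is therefore no ``paper's own proof'' to compare your proposal against.

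As for the content of your sketch: the $\partial$-part is correct and in fact simpler than you present it. Since $f$ is holomorphic, $\partial(\omega/f) = (\partial\omega)/f - (\omega\wedge\partial f)/f^2$ is semi-meromorphic on $M'$ with no further hypothesis, so $\partial a = \pi_*\partial(\omega/f)\in ASM(M)$ immediately; the reduction $Z(f)\subseteq\pi^{-1}(W)$ plays no role here. For the $\dbar$-part you have correctly located the crux: without arranging $Z(f)\subseteq\pi^{-1}(W)$ (equivalently, finding a representation adapted to the Zariski-singular support of $a$), the residue term $\pi_*(\omega\wedge\dbar(1/f))$ need not be supported in $W$, and your decomposition does not directly yield $\1_{M\setminus W}\dbar a$. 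Your description of how to achieve this reduction (``adjusting the representative and blowing up further'') is where the real work lies, and it is not carried out in your proposal. This is exactly the step handled in \cite{andersson-wulcan:asm}; if you want a self-contained argument you should consult that reference for how the representation is normalized relative to $ZSS(a)$.
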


Given $a\in ASM(M)$, let $ZSS(a)$ (the \emph{Zariski-singular support}) denote
the smallest Zariski-closed set $V\subset M$ such that $a$ is smooth outside
$V$. The pseudomeromorphic current $r(a):=\1_{ZSS(a)} \dbar a$ is called the
\emph{residue} of $a$.

Almost semi-meromorphic currents have the so-called \emph{standard
  extension property (SEP)} meaning that $\1_Wa=0$ in $U$ for each
subvariety $W\subset U$ of positive codimension, where $U$ is any open
set in $M$.
In particular, if $a\in ASM(M)$, $\chi \sim \chi_{[1,\infty)}$, and $s$ is any generically non-vanishing holomorphic section of a Hermitian
vector bundle over $M$,
then
\begin{equation}\label{tullen}
    \lim_{\epsilon\to 0} \chi(|s|^2/\epsilon) a= a.
\end{equation}
It follows in view of Lemma ~\ref{gold} that, if $\{s=0\}\supset ZSS(a)$, then
\begin{equation}
 r(a) = \lim_{\epsilon\to 0}  \dbar \chi(|s|^2/\epsilon) \wedge a = \lim_{\epsilon\to 0}  d \chi(|s|^2/\epsilon) \wedge a.
\end{equation}

\begin{lma} \label{lma:regProduct}
Let $\chi$ be a cutoff function, $Z \subseteq M$ an analytic subset of positive codimension,
and let $s_1,\ldots,s_k$ be sections of vector bundles $F_1,\ldots,F_k$ such that
$s_j \not\equiv 0$ and $\{ s_k = 0 \} \supseteq Z$, and let $\beta$ be an almost semi-meromorphic
current on $M$. Then
\begin{equation*}
    \lim_{\epsilon \to 0} \chi(|s_1|^2/\epsilon) \cdots \chi(|s_k|^2/\epsilon) \beta = \beta.
\end{equation*}
\end{lma}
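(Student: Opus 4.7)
The plan is to proceed by induction on the number $k$ of cutoffs, with the base case being exactly \eqref{tullen}, the standard extension property for almost semi-meromorphic currents applied to the single cutoff $\chi(|s_k|^2/\epsilon)$.

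For the inductive step, the natural strategy is a telescoping identity: writing $\chi_j := \chi(|s_j|^2/\epsilon)$, we have
\begin{equation*}
\chi_1 \cdots \chi_k \beta - \beta = \sum_{j=1}^{k} \Bigl(\prod_{i<j}\chi_i\Bigr)(\chi_j - 1)\beta,
\end{equation*}
so it suffices to show that each summand tends to zero as $\epsilon \to 0$. In each term the factor $(\chi_j - 1)$ is supported in a shrinking neighborhood of $\{s_j = 0\}$, which has positive codimension by the assumption $s_j \not\equiv 0$, and the remaining factor $\prod_{i<j}\chi_i$ is a uniformly bounded (by $1$) smooth function. Heuristically, the SEP makes $(\chi_j - 1)\beta \to 0$ while the bounded smooth factor cannot spoil this.

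The main obstacle is that all cutoffs depend simultaneously on the same parameter $\epsilon$, so one cannot simply apply SEP directly, since multiplication by the $\epsilon$-dependent bounded factor $\prod_{i<j}\chi_i$ does not \emph{a priori} preserve convergence to zero in the current topology. To overcome this, I would pass to a modification on which $\beta$ becomes semi-meromorphic. Since $\beta \in ASM(M)$, there is a modification $\pi \colon M' \to M$ with $\beta = \pi_*(\omega/f)$ for a smooth form $\omega$ and a holomorphic section $f$ of a line bundle over $M'$. Pulling everything back and performing a further log-resolution, we may assume that the divisor $\{f \cdot \pi^*s_1 \cdots \pi^*s_k = 0\}$ has normal crossings, so that in each local chart on $M'$ one has $\pi^*s_j = z^{a_j}u_j$ and $f = z^b v$ with $u_j,v$ non-vanishing.

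In such monomial coordinates, the pairing of $\chi_1 \cdots \chi_k \beta$ with a test form $\xi$ becomes, after pull-back, an integral against $\pi^*\xi$ of the explicit expression $\prod_j \chi(|z^{a_j}|^2|u_j|^2/\epsilon)\cdot \omega/(z^b v)$, and convergence to the corresponding integral without the cutoffs can be obtained directly by dominated convergence, using that $|\omega/(z^b v)|$ is locally integrable on the resolution against $\pi^*\xi$ up to a $\dbar$-residue already absorbed in the principal value. Summing the local contributions via a partition of unity on $M'$ and pushing forward by $\pi$ recovers the desired identity on $M$.
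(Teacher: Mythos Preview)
Your reduction to a semi-meromorphic current $\omega/f$ on a modification, and the further passage to normal crossings, is sound and matches the paper's reduction. The gap is in the last step: the assertion that one can conclude by dominated convergence because ``$|\omega/(z^b v)|$ is locally integrable against $\pi^*\xi$'' is false whenever some exponent in $b$ is at least $2$. Already for $\beta = [1/z^2]$ on $\C$ (no resolution needed), $1/|z|^2$ is not locally integrable; the principal value $\int g(z)/z^2\,dz\wedge d\bar z$ exists only through angular cancellation, not through an $L^1$ bound. Your clause ``up to a $\dbar$-residue already absorbed in the principal value'' does not repair this --- the principal value is precisely the non-integrable object, and no residue is being subtracted. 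Once dominated convergence is unavailable, the simultaneous $\epsilon$-dependence of all the cutoffs $\chi(|z^{a_j}|^2|u_j|^2/\epsilon)$ reappears as a genuine difficulty, since these factors need not respect the cancellation that makes the principal value converge.

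The paper handles this by a structurally different argument. It introduces \emph{independent} parameters and shows that
\[
(\epsilon_1,\dots,\epsilon_k)\ \longmapsto\ \chi(|s_1|^2/\epsilon_1)\cdots\chi(|s_k|^2/\epsilon_k)\,\beta
\]
is continuous on $[0,\infty)^k$; after the same reduction to $\beta_0/f$, this continuity is obtained from a cited result on multi-parameter regularizations of principal-value currents. The value at the origin is then identified via the \emph{iterated} limit, sending one $\epsilon_j\to 0$ at a time, and each such one-parameter step is exactly the single-cutoff SEP \eqref{tullen} that you invoke as your base case. Continuity then forces the diagonal limit $\epsilon_1=\dots=\epsilon_k=\epsilon\to 0$ to agree. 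This decoupling of parameters, together with the cited continuity theorem, is what replaces the unavailable dominated-convergence step.
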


\begin{proof}
We prove more generally that
\begin{equation} \label{eq:betaRegularization}
    \chi(|s_1|^2/\epsilon_1) \cdots \chi(|s_k|^2/\epsilon_k) \beta
\end{equation}
is continuous as a function of $(\epsilon_1,\ldots,\epsilon_k) \in [0,\infty)^k$, and
the value at $0$ equals $\beta$.
The desired equality then holds since $\beta$ has the SEP, and thus the
iterated limit of \eqref{eq:betaRegularization} when $\epsilon_k \to 0$
one at a time equals $\beta$.

Since $\beta$ is a locally finite sum of push-forwards under modifications of currents
of the form $\pi_*(\beta_0/f)$, where $\beta_0$ is a smooth form with compact support
and $f$ is a holomorphic function, we may in fact assume that $\beta = \beta_0/f$.
The result then follows by \cite[Theorem~1.1]{BS} and the paragraph after the proof
of Theorem~1.1 on \cite[p. 52]{BS}.
\end{proof}

\begin{prop} \label{prop:globalAsmRegularization}
Let $(U_\alpha)$ be a cover of $M$, let $(\psi_\alpha)$ be a partition of unity subordinate to
$(U_\alpha)$, and let $\beta$ be an almost semi-meromorphic current on $M$,
which is smooth outside of the analytic subset $Z$.
Let for each $\alpha$, $s_\alpha\not\equiv 0$ be a section of some Hermitian vector
bundle over $U_\alpha$ such that $\{ s_\alpha = 0 \} \supseteq Z$,
let $\chi : \R_{\geq 0} \to \R_{\geq 0}$ be a smooth cut-off function,
and define $\chi_\epsilon := \sum \psi_\alpha \chi(|s_\alpha|^2/\epsilon)$.
Then for $k \geq 1$,
\begin{align}
\lim_{\epsilon\to 0} \chi_\epsilon^k \beta = \beta \label{eq:regChi}    \\
\lim_{\epsilon\to 0} d \chi_\epsilon^k \wedge \beta = r(\beta) \label{eq:regDbarChi},
\end{align}
so in particular, both limits are pseudomeromorphic and depend only on $\beta$.
\end{prop}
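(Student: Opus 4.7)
The plan is to prove the two limits in sequence, with \eqref{eq:regChi} serving as a stepping stone for \eqref{eq:regDbarChi}.

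For \eqref{eq:regChi}, expand via the multinomial theorem:
\begin{equation*}
\chi_\epsilon^k = \sum_{\alpha_1, \ldots, \alpha_k} \psi_{\alpha_1} \cdots \psi_{\alpha_k}\, \chi(|s_{\alpha_1}|^2/\epsilon) \cdots \chi(|s_{\alpha_k}|^2/\epsilon).
\end{equation*}
Multiplying by $\beta$, each term converges on $U_{\alpha_1} \cap \cdots \cap U_{\alpha_k}$ to $\psi_{\alpha_1} \cdots \psi_{\alpha_k}\, \beta$ by Lemma~\ref{lma:regProduct} (each $s_{\alpha_j}$ vanishes on $Z \supseteq ZSS(\beta)$). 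Summing over index tuples and using $(\sum_\alpha \psi_\alpha)^k = 1$ yields $\beta$.

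For \eqref{eq:regDbarChi}, I would avoid computing $d\chi_\epsilon^k$ by a direct multinomial expansion, since this produces cross terms $\chi(|s_\gamma|^2/\epsilon)\, d\chi(|s_\alpha|^2/\epsilon) \wedge \beta$ whose limits depend subtly on the relation between $s_\gamma$ and $s_\alpha$: for $s_\gamma = s_\alpha$ the identity $\chi\, d\chi = \tfrac12 d(\chi^2)$ forces the limit to equal $\tfrac12 r(\beta)$ rather than $r(\beta)$. Instead, use the Leibniz rule together with continuity of $d$ on currents and \eqref{eq:regChi}:
\begin{equation*}
\lim_{\epsilon \to 0} d\chi_\epsilon^k \wedge \beta = \lim_{\epsilon \to 0} \bigl[d(\chi_\epsilon^k \beta) - \chi_\epsilon^k\, d\beta\bigr] = d\beta - \lim_{\epsilon \to 0} \chi_\epsilon^k\, d\beta,
\end{equation*}
reducing the problem to computing $\lim \chi_\epsilon^k\, d\beta$.

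Decompose $d\beta = (\partial\beta + \1_{M\setminus Z}\,\dbar\beta) + r(\beta)$. By Lemma~\ref{gold}, the parenthesized sum is almost semi-meromorphic, so \eqref{eq:regChi} applied to it gives convergence of its product with $\chi_\epsilon^k$ to itself. For the residue term, note that $\chi(|s_\alpha|^2/\epsilon)$ vanishes on the open set $\{|s_\alpha|^2 < t_0 \epsilon\}$ (with $t_0 > 0$ such that $\chi \equiv 0$ on $[0, t_0]$), which is a neighborhood of $\{s_\alpha = 0\} \supseteq Z$ in $U_\alpha$; by local finiteness of the cover, $\chi_\epsilon$ itself vanishes in a (shrinking but nonempty) neighborhood of $Z$ for every $\epsilon > 0$. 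Since $r(\beta)$ is supported on $Z$, $\chi_\epsilon^k\, r(\beta) \equiv 0$ identically, so $\lim \chi_\epsilon^k\, d\beta = d\beta - r(\beta)$ and hence $\lim d\chi_\epsilon^k \wedge \beta = r(\beta)$. Pseudomeromorphicity of both limits and their dependence only on $\beta$ are immediate from the resulting explicit identities.

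The main technical obstacle sidestepped by this approach is precisely the non-uniformity of the naive term-by-term expansion of $d\chi_\epsilon^k$; the Leibniz trick bypasses it by reducing everything to products of $\chi_\epsilon^k$ with ASM currents (governed uniformly by \eqref{eq:regChi}) plus a residue contribution killed identically by the support argument.
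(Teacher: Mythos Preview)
Your proof is correct and follows essentially the same route as the paper: for \eqref{eq:regChi} you expand $\chi_\epsilon^k$ and invoke Lemma~\ref{lma:regProduct} term by term (the paper does this in one line), and for \eqref{eq:regDbarChi} you use the Leibniz identity $d\chi_\epsilon^k\wedge\beta=d(\chi_\epsilon^k\beta)-\chi_\epsilon^k\,d\beta$, split $d\beta$ into its almost semi-meromorphic part $\partial\beta+\1_{M\setminus Z}\dbar\beta$ and the residue $r(\beta)$, apply \eqref{eq:regChi} to the former, and kill the latter by the support argument---exactly as the paper does (phrased there as ``$\1_{M\setminus Z}\dbar\beta=\dbar\beta$ on $\supp\chi_\epsilon^k$'').
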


\begin{proof}
The equality \eqref{eq:regChi} follows immediately from Lemma~\ref{lma:regProduct}.
To prove \eqref{eq:regDbarChi}, we have that
\begin{equation*}
\lim_{\epsilon\to 0}  d\chi_\epsilon^k \wedge \beta = \lim_{\epsilon\to 0} d(\chi_\epsilon^k \beta) - \chi_\epsilon^k (\partial \beta + \1_{M\setminus Z} \dbar\beta) = d\beta - \partial\beta - \1_{M\setminus Z} \beta = r(\beta),
\end{equation*}
where the first equality holds since $\1_{M\setminus Z} \dbar\beta = \dbar\beta$
on $\supp \chi_\epsilon^k$, and the second equality holds by \eqref{eq:regChi} since
since $\partial \beta$ and $\1_{M\setminus Z} \dbar \beta$ are almost semi-meromorphic by
Lemma~\ref{gold}.
\end{proof}

If $a_1, a_2 \in ASM(M)$, then $a_1+a_2\in ASM(M)$, and moreover there
is a well-defined product $a_1\wedge a_2\in ASM(M)$, so that $ASM (M)$
is an algebra over smooth forms, see \cite[Section~4.1]{andersson-wulcan:asm}. Note that
if $\chi \sim \chi_{[1,\infty)}$ and $s$ is a generically nonvanishing
holomorphic section of a Hermitian vector bundle such that $\{s=0\}$
contains the Zariski-singular supports of $a_1$ and $a_2$,  then
$a_1\wedge a_2$ is the limit of the smooth form $\chi(|s|^2/\epsilon) a_1\wedge a_2.$

\section{Residue currents as localization of characteristic classes} \label{sect:recallKLW}

We briefly recall the relevant parts of the constructions from \cite{LW:chern-currents-sheaves}
and \cite{KLW} of residue currents representing characteristic classes of coherent sheaves.

Assume that $\G$ is a coherent analytic sheaf on $M$, and that $\G$ admits a
locally free resolution
\begin{equation}
    0 \to E_N \stackrel{\varphi_N}{\rightarrow} \cdots \stackrel{\varphi_1}{\rightarrow} E_0 \to \G \to 0,
\end{equation}
and assume that $E$ is equipped with Hermitian metrics and a connection $D$.
The construction of the residue currents in both these articles is done through
a procedure as follows:
If $Z$ denotes the analytic subset where $\G$ is not a vector bundle,
then under suitable hypothesis, a connection $\tilde{D}$ on $E|_{M\setminus Z}$ is
constructed,and $\Phi \in \C[z_1,\ldots,z_n]$ is a homogeneous symmetric polynomial
of appropriate degree, then $\Phi(\tilde{D}) = 0$.

Let $s$ be a holomorphic section of some holomorphic vector bundle over $M$ such
that $\{ s = 0 \} = Z$. For example, we may take $s = \bigwedge^\rho \varphi_1$,
where $\rho$ is the (generic) rank of $\varphi_1$.
Let $\chi \sim \chi_{[1,\infty)}$ and define $\chi_\epsilon = \chi(|s|^2/\epsilon)$.
Note that $\chi_\epsilon \equiv 0$ in a neighborhood of $Z$,
and is such that if  that the set $\Sigma_\epsilon$ where $\chi_\epsilon$ is not identically
$1$ shrinks to $Z$ when $\epsilon$ tends to $0$.
One may then define a new family of connections
\begin{equation} \label{eq:DtildeRegularization}
    \widehat{D}^\epsilon = (1-\chi_\epsilon) \tilde{D} + \chi_\epsilon D.
\end{equation}
Then, $\Phi(\widehat{D}^\epsilon)$ will be a family of characteristic forms,
which since $\widehat{D}^\epsilon = \tilde{D}$ outside of $\Sigma_\epsilon$,
this family will satisfy that the support of $\Phi(\widehat{D}^\epsilon)$
shrinks to $Z$ as $\epsilon$ tends to $0$.
The main point of the construction is then to choose the connections $\tilde{D}$
appropriately such that the limit
$R^\Phi = \lim_{\epsilon\to 0} \Phi(\widehat{D}^\epsilon)$ exists as a current,
which will then have support on $Z$, and represent $\Phi(\G)$.
We describe below how this is done in the two cases.

In both cases, the connections are constructed such that if one for $k=0,\dots,N$,
writes on $M \setminus Z$
\begin{equation} \label{eq:tildeDk}
    \tilde{D}_k = D_k + a_k,
\end{equation}
where $a_k$ is a smooth $\End(E_k)$-valued $1$-form on $M\setminus Z$, then $a_k$
admits an extension as an almost semi-meromorphic current to $Z$.
It will then follow from the theory of almost semi-meromorphic currents
that the limit $\Phi(\widehat{D}^\epsilon)$ exists.
Note that when $\tilde{D}_k$ is of this form, the regularization \eqref{eq:DtildeRegularization}
may be written as
\begin{equation} \label{eq:DtildeRegularization2}
    \widehat{D}^\epsilon = \chi_\epsilon a + D.
\end{equation}
If $D=(D_N,\dots,D_0)$, we say that $D$ is of type $(1,0)$, or that $D$ is a $(1,0)$-connection,
if $D_k$ is of type $(1,0)$ for each $k$.
\begin{remark} \label{axel}
Note that if $D$ and $\tilde{D}$ are of type $(1,0)$, then so is $\tilde{D}^\epsilon$.
\end{remark}

In both cases, the morphisms $a_k$ are constructed in terms of the
following morphisms. For $k=1,\ldots,N$, we let $\sigma_k: E_{k-1}\to E_k$
be the \emph{minimal inverse} of $\varphi_k$. These are smooth vector bundle
morphisms defined outside the analytic set $Z_k\subset Z$ where $\varphi_k$
does not have optimal rank and are determined by the following properties:
\begin{equation*}
\varphi_k \sigma_k \varphi_k = \varphi_k, \quad \im \sigma_k \perp \im \varphi_{k+1} \quad \text{and} \quad  \sigma_{k+1} \sigma_k = 0.
\end{equation*}
The morphisms $\sigma_k$ may be continued as almost semi-meromorphic sections
of $\End (E)$, see, e.g., the proof of Lemma~2.1 in \cite{LW:chern-currents-sheaves}.

\begin{lma} \label{lma:DtildeLW}
Let $\G,Z,(E,\varphi),D$ be as above, and assume that $Z=\supp \G$ has codimension $\geq 1$.
Define a connection $\tilde{D}$ on $E$ through \eqref{eq:tildeDk} with
$a_k = -\sigma_k D\varphi_k$ for $k=1,\dots,N$.
Then $\tilde{D}$ is a connection on $M \setminus Z$ which is compatible with $(E,\varphi)$
and $a_k$ admits an extension as an almost semi-meromorphic current to $M$.
If $D$ is of type $(1,0)$, then so is $\tilde{D}$.
\end{lma}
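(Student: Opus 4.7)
The statement splits into three parts, which I would dispatch in order: the almost semi-meromorphic extension of $a_k$; the compatibility $\tilde D \varphi_k = 0$ on $M\setminus Z$; and the preservation of the type $(1,0)$ property. The first is essentially an importation from earlier work. The minimal inverses $\sigma_k$ extend from the smooth locus to almost semi-meromorphic sections of $\Hom(E_{k-1},E_k)$ on all of $M$; this is (the proof of) Lemma~2.1 in \cite{LW:chern-currents-sheaves}. Since $\varphi_k$ is a holomorphic vector bundle morphism, $D\varphi_k$ is a globally smooth $\Hom(E_k,E_{k-1})$-valued $1$-form, and because $ASM(M)$ is a module over the smooth forms (cf.\ \cite[Section~4]{andersson-wulcan:asm}), the product $a_k = -\sigma_k D\varphi_k$ is an almost semi-meromorphic current extending its smooth restriction.

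For compatibility, I would work pointwise on $M\setminus Z$, where all $\sigma_k$ are smooth and $(E,\varphi)$ is exact. The defining properties of the minimal inverses together with exactness yield the standard completeness identity $\varphi_{k+1}\sigma_{k+1} + \sigma_k \varphi_k = \Id_{E_k}$. Unpacking the induced connection on $\End(E)$ with the conventions of Section~\ref{superduper}, one obtains
$$
\tilde D \varphi_k \;=\; D\varphi_k \;-\; \sigma_{k-1}(D\varphi_{k-1})\varphi_k \;-\; \varphi_k \sigma_k (D\varphi_k).
$$
Applying the graded Leibniz rule to $D(\varphi_{k-1}\varphi_k) = 0$ relates $(D\varphi_{k-1})\varphi_k$ to $\varphi_{k-1}(D\varphi_k)$, and the completeness identity then collapses the right-hand side to $\bigl(\Id_{E_{k-1}} - \varphi_k \sigma_k - \sigma_{k-1}\varphi_{k-1}\bigr)(D\varphi_k) = 0$.

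For the type $(1,0)$ claim, since $\varphi_k$ is holomorphic we have $(D\varphi_k)^{0,1} = \dbar \varphi_k = 0$, so $D\varphi_k$ is pure $(1,0)$; as $\sigma_k$ has form degree zero, $a_k$ is also pure $(1,0)$, and adding it to $D_k$ leaves the $(0,1)$-part equal to $\dbar$. The main obstacle is the algebra in the compatibility step: the underlying cancellation is short, but requires careful bookkeeping of the signs coming from \eqref{thyme}--\eqref{rosemary} and the combined use of $D(\varphi_{k-1}\varphi_k) = 0$ with the completeness identity in precisely the right order.
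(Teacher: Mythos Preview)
Your proposal is correct and matches the paper's proof in the first and third parts (the almost semi-meromorphic extension via the ASM extension of $\sigma_k$ and closure under multiplication by smooth forms, and the $(1,0)$-type preservation via $\dbar\varphi_k=0$). For compatibility, the paper simply invokes \cite[Remark~4.5]{LW:chern-currents-sheaves}, whereas you unfold the computation directly using the completeness identity $\varphi_k\sigma_k+\sigma_{k-1}\varphi_{k-1}=\Id_{E_{k-1}}$ together with the Leibniz rule for $D(\varphi_{k-1}\varphi_k)=0$.

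This is not a genuinely different route so much as an unpacking of what lies behind the cited remark: the same cancellation is the content of that reference. Your version is more self-contained, which is a virtue here; the paper's is shorter. One small point worth making explicit in your write-up is the boundary case $k=1$: there $a_0=0$, and the needed identity is $\varphi_1\sigma_1=\Id_{E_0}$ on $M\setminus Z$, which holds precisely because $\supp\G=Z$ forces $\varphi_1$ to be surjective off $Z$. You hint at this via exactness but it deserves a sentence, since it is the one place where the hypothesis $\codim\supp\G\geq 1$ enters the compatibility argument.
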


\begin{proof}
This definition of $\tilde{D}_k$ coincides with the definition in
\cite[Remark~4.5]{LW:chern-currents-sheaves}. It also follows by that remark
that $\tilde{D}$ is compatible with $(E,\varphi)$ since $(E,\varphi)$ is
pointwise exact on $M \setminus Z$.
Since $\sigma_k$ extends as an almost semi-meromorphic current to $M$,
and almost semi-meromorphic currents are closed under multiplication with smooth
forms, it follows that $a_k$ extends as an almost semi-meromorphic current to all of $M$.

If $D_k$ and $D_{k-1}$ are of type $(1,0)$, then since $\varphi_k$ is holomorphic,
$D\varphi_k$ will be of bidegree $(1,0)$, and hence $\tilde{D}_k = D_k + a_k$ will be
of type $(1,0)$ as well.
\end{proof}

We now consider the case of foliations from \cite{KLW}, i.e., we assume that $\G = N\F$,
where $\F$ is a holomorphic foliation on $M$. We furthermore assume that $N\F$ admits a
resolution of the form
\begin{equation}
    0 \to E_N \stackrel{\varphi_N}{\rightarrow} \cdots \stackrel{\varphi_1}{\rightarrow} TM \to N\F \to 0,
\end{equation}
i.e., at level $0$, the bundle equals $TM$, and that the connection $D_0$ on $TM$
is of type $(1,0)$ and torsion free.
In \cite{KLW}, we then defined a certain smooth morphism
$\mathcal{D}\varphi_1 : E_0 \otimes TM \to E_1$, which is obtained from $D\varphi_1$
by the identification of $1$-forms with $\Hom(TM,\Ok)$, see
\cite[equation (2.11)]{KLW} for a precise definition.
Using this morphism, one may produce a new morphism
\begin{equation} \label{eq:bDef}
    b = \mathcal{D}\varphi_1 \sigma_1 \left( dz\cdot \frac{\partial}{\partial z}\right),
\end{equation}
which takes values in $\End(E_0)$ and is smooth on $M\ Z$, see \cite[equation (5.7)]{KLW},
and where $dz \cdot \frac{\partial}{\partial z}$ denotes the $1$-form valued vector
in $E_0 = TM$ which invariantly may be described as the vector induced by $\Id_{TM}$
through identifying elements of $\End(TM)$ with elements of $\Hom(TM,\Ok) \otimes TM$,
i.e., $1$-form valued sections of $TM$.

\begin{lma} \label{lma:DtildeKLW}
Let $\G,Z,(E,\varphi),D$ be as above, and assume that $\G = N\F$, where $\F$ is a
holomorphic foliation on $M$.
Define a connection $\tilde{D}$ on $E$ through \eqref{eq:tildeDk} with
\[
a_0 = b(\Id_{E_0}-\varphi_1 \sigma_1)-D\varphi_1\sigma_1
\quad \text{ and } \quad
a_k = -D\varphi_{k+1}\sigma_{k+1} \text{\quad for \quad} k=1,\dots,N,
\]
where $b$ is defined by \cite[equation (5.7)]{KLW}.
Then $\tilde{D}$ is a connection on $M \setminus Z$ which is compatible with $(E,\varphi)$,
$a_k$ admits an extension as an almost semi-meromorphic current to $M$,
and if $\tilde{D}_{-1}$ denotes the connection on $E_{-1} := \coker \varphi_1|_{M\setminus Z} \cong N\F|_{M \setminus Z}$ induced by $\tilde{D}_0$, then $\tilde{D}_{-1}$ is a basic connection on $E_{-1}$.
Furthermore, the form $b$ is locally defined in terms of $(E,\varphi)$ and its Hermitian metrics
and connection $D$.
If $D_1,\dots,D_N$ are of type $(1,0)$, then so is $\tilde{D}$.
\end{lma}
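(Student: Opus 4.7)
The plan is to verify the five assertions in turn, mirroring the proof of Lemma~\ref{lma:DtildeLW} and importing the calculations from \cite[Section~5]{KLW}.

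First, I would check compatibility, i.e.\ $\tilde D\varphi_k=0$ on $M\setminus Z$ for $k=1,\dots,N$. For $k\geq 2$, the computation is formally the same as in Lemma~\ref{lma:DtildeLW}: the terms $a_k\varphi_{k+1}$ and $\varphi_k a_{k-1}$ combine, using $\sigma_{k+1}\sigma_k=0$ and $\varphi_k\sigma_k\varphi_k=\varphi_k$, to cancel $D\varphi_k$ against itself up to sign. For $k=1$ the extra summand $b(\Id_{E_0}-\varphi_1\sigma_1)$ of $a_0$ contributes $b(\Id_{E_0}-\varphi_1\sigma_1)\varphi_1$, which is zero by the first minimal-inverse identity; so compatibility is preserved exactly as in the sheaf case.

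Second, that each $a_k$ extends as an almost semi-meromorphic current to $M$ follows from the almost semi-meromorphic extension of $\sigma_k$ recalled before Lemma~\ref{lma:DtildeLW} together with the fact that $ASM(M)$ is stable under multiplication by smooth forms (Lemma~\ref{gold}); the smooth forms $D\varphi_{k+1}$ and $\mathcal{D}\varphi_1$ both carry the construction through, and the $E_0$-valued smooth section $dz\cdot\partial/\partial z$ used in \eqref{eq:bDef} poses no additional issue.

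Third --- the heart of the statement --- I would verify that the induced connection $\tilde D_{-1}$ on $E_{-1}\cong N\F|_{M\setminus Z}$ is basic in the sense of Definition~\ref{def:basic-connection}. That $\tilde D_{-1}$ is of type $(1,0)$ will already have been established in step five below. To verify \eqref{eq:basicCondition}, one contracts a section $u$ of $T\F$ with $\tilde D_{-1}(\varphi_0 v)=\varphi_0(D_0 v+a_0 v)$ for $v$ a section of $TM$. The summand $-D\varphi_1\sigma_1$ of $a_0$ contributes nothing after composition with $\varphi_0$ on $M\setminus Z$, because on $M\setminus Z$ one has $\varphi_0\varphi_1=0$ and $\im(\varphi_1\sigma_1)=\im\varphi_1=\ker\varphi_0$; so the only nontrivial contribution beyond $\varphi_0 D_0 v$ comes from $b(\Id_{E_0}-\varphi_1\sigma_1)v$. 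This is precisely what $b$ was built for in \eqref{eq:bDef}: combining the torsion-free hypothesis $i(u)D_0 v-i(v)D_0 u=[u,v]$ with the definition of $\mathcal D\varphi_1$ as the reinterpretation of $D\varphi_1$ as a map $TM\otimes E_0\to E_1$ yields, after composition with $\varphi_0$, exactly the identity $i(u)\varphi_0(D_0 v+a_0 v)=\varphi_0[u,v]$. I would import this calculation from \cite[Section~5]{KLW} essentially verbatim.

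Finally, locality of $b$ is immediate from \eqref{eq:bDef}: $\sigma_1$ is determined by $\varphi_1$ and the Hermitian metrics on $E_0,E_1$, while $\mathcal D\varphi_1$ is determined by $\varphi_1$ and $D$. For the $(1,0)$ assertion, if $D_0,\dots,D_N$ are of type $(1,0)$ then $D\varphi_k$ is of bidegree $(1,0)$ for every $k$ since $\varphi_k$ is holomorphic, so each $a_k$ with $k\geq 1$ as well as $D\varphi_1\sigma_1$ is of type $(1,0)$; the same applies to $b$, since $\mathcal D\varphi_1$ inherits the bidegree of $D\varphi_1$ and $dz\cdot\partial/\partial z$ is of type $(1,0)$. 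The expected main obstacle is step three: one has to unpack the identification between $D\varphi_1$ and $\mathcal D\varphi_1$ while tracking signs from the superstructure \eqref{thyme}--\eqref{rosemary} and the torsion-free assumption on $D_0$, in order to convert the raw identity $i(u)b = \text{(derivative terms)}$ into the Lie-bracket form demanded by \eqref{eq:basicCondition}.
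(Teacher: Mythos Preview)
Your plan is essentially the paper's own approach: both arguments defer to \cite[Section~5]{KLW}. The paper is more terse---it first identifies the present $\tilde D$ with the one in \cite{KLW} via \cite[equations (5.7), (5.16)--(5.18)]{KLW}, then invokes \cite[Proposition~5.4]{KLW} to get compatibility and the basic property simultaneously, and \cite[Lemma~5.5]{KLW} for the almost semi-meromorphic extension---whereas you intend to unpack parts of this directly.

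Two small inaccuracies in your sketch are worth flagging, even though they would not derail the argument once you actually import the computations from \cite{KLW}. First, the compatibility check for $k\geq 2$ is \emph{not} formally the same as in Lemma~\ref{lma:DtildeLW}: there $a_k=-\sigma_k D\varphi_k$, while here $a_k=-D\varphi_{k+1}\sigma_{k+1}$. Both yield compatible connections on $M\setminus Z$, but the verification uses the splitting identity $\varphi_{k+1}\sigma_{k+1}+\sigma_k\varphi_k=\Id_{E_k}$ rather than the identities you list. Second, in your step three the summand $-D\varphi_1\sigma_1$ does \emph{not} vanish after composing with $\varphi_0$: one has $\varphi_0(D\varphi_1)\sigma_1 v=\varphi_0 D_0(\varphi_1\sigma_1 v)$, which is nonzero in general; rather, it combines with $\varphi_0 D_0 v$ to give $\varphi_0 D_0\big((\Id-\varphi_1\sigma_1)v\big)$, and it is this combination together with the $b$-term that produces the basic connection. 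Since you plan to import the calculation from \cite{KLW} anyway, this will sort itself out, but your heuristic explanation of why it works is off.
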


\begin{proof}
It follows by \cite[equation (5.7), (5.16), (5.17) and (5.18)]{KLW} that $\tilde{D}$ defined
here coincides with the connections $\tilde{D}$ in \cite{KLW} (while we remark that
the smooth reference connection on $E_0$ that we here denote by $D_0$ is denoted
$D^{TM}$ in \cite{KLW}, and the $D_0$ that appears in \cite[Section 5.1]{KLW} has a different meaning).
It follows similarly as above that $a_k$ is smooth on $M \setminus Z$, and
by additionally using that almost semi-meromorphic currents form an algebra,
it follows that $a_k$  admits an extension as an almost semi-meromorphic current on $M$, see
also \cite[Lemma 5.5]{KLW} (where we remark that the form $a_0$ defined here is different,
but related to the form $a_0$ in \cite{KLW}).
By \cite[Proposition~5.4]{KLW}, $(\tilde{D}_N,\ldots,\tilde{D}_0,D_{basic})$ is compatible
with the augmented complex \eqref{eq:augmentedEcomplex}, so by the discussion in Section~\ref{superduper},$\tilde{D}=(\tilde{D}_N,\dots,\tilde{D}_0)$ is compatible with $(E,\varphi)$,
and $D_{basic}$ is the connection on $E_{-1}$ induced by $\tilde{D}_0$.
By the definition of $b$, it follows that on an open set $U \subseteq M$, it only depends
$(E,\varphi)$ and its Hermitian metrics and connections (at levels $0$ and $1$) restricted to $U$.
\end{proof}

\section{Global Chern forms} \label{sec:globalChern}

In this section we briefly recall the necessary parts of the construction of simplicial
resolutions and characteristic classes due to Green,
\cite{green}.
Further details can be found in \cite{green,TT86,HosI,HosII}.

\subsection{Simplicial resolutions in the sense of Green}
Assume that $\G$ is a coherent analytic sheaf on $M$, and that
$\U=(U_\alpha)_{\alpha\in I}$ is a (locally finite) Stein open cover of
$M$.
Moreover, assume that for some $N\geq 0$ and for each $\alpha\in I$, there is a locally free
resolution $(E^\alpha, \varphi^\alpha)$ of
$\G|_{U_\alpha}$,
\begin{equation} \label{clown}
     0 \to E^{\alpha}_N \xrightarrow[]{\varphi^\alpha_N}
     \cdots\xrightarrow[]{\varphi^\alpha_1}
     E^{\alpha}_0\xrightarrow[]{} \G|_{U_\alpha} \to 0,
   \end{equation}
   of $\Ok$-modules over $U_\alpha$.
By Hilbert's syzygy theorem, locally we can always find locally free
resolutions of length at most $n$, so after possibly
refining $\U$ we may assume that we have resolutions \eqref{clown} for each
vertex $\alpha$ of $N\U$.

Starting from the locally free resolutions $(E^\alpha,
\varphi^\alpha)_{\alpha\in I}$ Green, \cite{green}, see also \cite{TT86},
constructed a ``simplicial resolution'' of $\G$.
The simplicial resolution consists of, for each $\Delta=(\alpha_0, \ldots, \alpha_p) \in N\U$,
a locally free resolution $(E^\Delta, \varphi^\Delta)$
of $\G|_{U_\Delta}$,
\begin{equation} \label{brown}
     0 \to E^{\Delta}_N \xrightarrow[]{\varphi^\Delta_N}
     \cdots\xrightarrow[]{\varphi^\Delta_1}
     E^{\Delta}_0\xrightarrow[]{} \G|_{U_\Delta} \to 0,
\end{equation}
and the different resolutions fit together in a certain way.
In fact, from the $(E^\alpha,\varphi^\alpha)$ one can construct a so-called twisted resolution of $\G$ in
the sense of Toledo and Tong, \cite{TT78}, and given this Green constructed his resolution.
How the resolutions fit together is summarised in \cite[Proposition~1.4]{green}, see also \cite[p.~ 264]{TT86}.
In particular, for each subsimplex $\tau$ of $\Delta=(\alpha_0,\ldots,\alpha_p)$, there is an isomorphism (in the category
of $\Ok_{U_\Delta}$-modules)
\begin{equation}\label{rainfall}
  E^\Delta \cong E^{\tau} \oplus E_{\tau}^{\Delta},
\end{equation}
where $E_{\tau}^{\Delta}$ is an elementary sequence in the
$E^{\alpha_j}, j=0,\ldots, p$.
Furthermore, if $\omega$ is in turn a subsimplex of $\tau$, then there is an isomorphism (in the category
of $\Ok_{U_\Delta}$-modules)
\begin{equation}\label{rainfall2}
  E_\omega^\Delta \cong E_\omega^{\tau} \oplus E_{\tau}^{\Delta}.
\end{equation}
We will say that $(E^\Delta, \varphi^\Delta)$ is a \emph{(holomorphic)
  simplicial
  resolution of $\G$ with respect to $\U$}.

\begin{remark} \label{rem:smoothOrRealAnalytic}
The construction of Green applies also for sheaves of $\E_M$-modules (smooth functions) or
$\mathcal{A}_M$-modules (real analytic functions), to produce smooth or real analytic simplicial
resolutions out of a family of locally free smooth or real analytic resolutions.
By flatness, a holomorphic simplicial resolution is also a real analytic simplicial resolution, and a real analytic simplicial resolution
is also a smooth simplicial resolution.
All the results in this section hold also for such resolutions, except for the results
concerning $(1,0)$-connections.
\end{remark}

\subsection{Connections and characteristic classes}\label{pingpong}

For each $\alpha\in I$, assume that $E^\alpha$ is equipped with a connection $D^\alpha$.

Let $D^\Delta_\alpha$ be the induced connection on the elementary sequence $E^\Delta_\alpha$,
cf., Section~\ref{apple}, and let $\widetilde D^{\Delta, \alpha}$ be the
connection on $E^\Delta$ induced by $D^\alpha\oplus
D^\Delta_\alpha$.
Note that if $\alpha$ is a vertex of $\tau$, which in turn is a subsimplex of $\Delta$,
then it follows by \eqref{rainfall2} that
\begin{equation} \label{eq:DDeltadef}
    \widetilde{D}^{\Delta,\alpha} = \widetilde{D}^{\tau,\alpha}\oplus D^\Delta_\tau,
\end{equation}
where $D^\Delta_\tau$ is the connection induced by the elementary sequence $E^\Delta_\tau$.

Given $\Delta=(\alpha_0, \ldots, \alpha_p)\in N\U$,
let $\pi$ denote the projection $U_{\Delta} \times \Delta_p \to
U_{\Delta}$, where $\Delta_p$ is the standard simplex \eqref{snowfall},
and let
\begin{equation}\label{redpen}
D^{\Delta} =
\sum_{j=0}^p t_j \pi^* \widetilde D^{\Delta, \alpha_j}.
\end{equation}
Then $D^{\Delta}$ is a connection on $\pi^* E^\Delta$.

Let $\Phi\in \C[z_1, \ldots, z_n]$ be a homogeneous symmetric
polynomial of degree $\ell$.
On $U_\Delta\times \Delta_p$ we have the characteristic form $\Phi(D^\Delta)$ defined
as in \eqref{schmetterling}.
We can associate with $D = (D^\alpha)_{\alpha \in I}$ the \v Cech-de Rham cochain
$\check \Phi(D) \in \check C^{\bullet}(\U, \E^\bullet)$, defined by
\begin{equation} \label{rectangle}
 \check\Phi(D)_{\Delta} = \pi_* \Phi(D^\Delta) = \int_{\Delta_p} \Phi(D^\Delta).
\end{equation}
Note that $\check \Phi(D)_\Delta\in \E^{2\ell-p}(U_\Delta)$, so
that
$\check \Phi(D)$ has total (\v Cech-de Rham) degree $2\ell$.

\begin{prop} \label{prop:Green-CdR}
Assume that $\G$, $\U$, $D$ and $\Phi$ are as above.
Then $\nabla \check \Phi(D) = 0$.
\end{prop}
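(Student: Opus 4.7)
The plan is to reduce $\nabla\check\Phi(D)=0$ to the transgression identity \eqref{eq:suwa-interpolation} applied simplex by simplex, using the direct sum splitting \eqref{rainfall}--\eqref{rainfall2} together with Lemma~\ref{lma:phiElementarySequence} to match face terms of the interpolation with the Čech coboundary.

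Fix a simplex $\Delta = (\alpha_0,\dots,\alpha_{p+1}) \in N\U$ and work over $U_\Delta$. On $E^\Delta$ we have the $(p+1)$-tuple of connections $\widetilde D^{\Delta,\alpha_0},\dots,\widetilde D^{\Delta,\alpha_{p+1}}$, and by definition
\begin{equation*}
    \check\Phi(D)_\Delta = \pi_*\Phi(D^\Delta) = \Phi(\widetilde D^{\Delta,\alpha_0},\dots,\widetilde D^{\Delta,\alpha_{p+1}})
\end{equation*}
in the notation of \eqref{eq:PhiD0Dp}. Applying \eqref{eq:suwa-interpolation} on $U_\Delta$ to these $p+2$ connections gives
\begin{equation*}
    \sum_{k=0}^{p+1}(-1)^k\,\Phi(\widetilde D^{\Delta,\alpha_0},\dots,\widehat{\widetilde D^{\Delta,\alpha_k}},\dots,\widetilde D^{\Delta,\alpha_{p+1}}) + (-1)^{p+1}d\,\check\Phi(D)_\Delta = 0.
\end{equation*}
The last term is precisely $(d\check\Phi(D))_\Delta$ by the sign convention in the definition of $d$, so it remains to identify each face term with $\check\Phi(D)_{\Delta^{(k)}}|_{U_\Delta}$, where $\Delta^{(k)} := (\alpha_0,\dots,\widehat{\alpha_k},\dots,\alpha_{p+1})$.

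For this, apply \eqref{rainfall}--\eqref{rainfall2} with $\tau = \Delta^{(k)}$ to obtain the splitting $E^\Delta \cong E^{\Delta^{(k)}}\oplus E^\Delta_{\Delta^{(k)}}$ on $U_\Delta$, where $E^\Delta_{\Delta^{(k)}}$ is an elementary sequence in the $E^{\alpha_j}$. By \eqref{eq:DDeltadef}, for each $j\neq k$,
\begin{equation*}
    \widetilde D^{\Delta,\alpha_j}|_{U_\Delta} = \widetilde D^{\Delta^{(k)},\alpha_j}|_{U_\Delta} \oplus D^\Delta_{\Delta^{(k)}},
\end{equation*}
so pulling back by $\pi: U_\Delta \times \Delta_p \to U_\Delta$ and taking the $t_j$-convex combination over $j\neq k$ (noting that $\sum_{j\neq k}t_j = 1$ on $\Delta_p$) yields that the interpolated connection used to compute the $k$-th face term splits as $D^{\Delta^{(k)}}\oplus \pi^* D^\Delta_{\Delta^{(k)}}$. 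Since the second summand is a connection on an elementary sequence coming from connections on the constituent bundles, Lemma~\ref{lma:phiElementarySequence} gives
\begin{equation*}
    \Phi\bigl(D^{\Delta^{(k)}}\oplus \pi^* D^\Delta_{\Delta^{(k)}}\bigr) = \Phi\bigl(D^{\Delta^{(k)}}\bigr).
\end{equation*}
Pushing forward along $\pi$ then identifies the $k$-th face term with $\pi_*\Phi(D^{\Delta^{(k)}}) = \check\Phi(D)_{\Delta^{(k)}}|_{U_\Delta}$, and the transgression identity becomes exactly $(\delta\check\Phi(D))_\Delta + (d\check\Phi(D))_\Delta = 0$, i.e.\ $(\nabla\check\Phi(D))_\Delta = 0$.

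The main delicate point is compatibility of the three data structures on the face: the splitting \eqref{rainfall2} iterated at the levels $\omega\subset\tau\subset\Delta$, the direct-sum formula \eqref{eq:DDeltadef} for $\widetilde D^{\Delta,\alpha_j}$, and the affine interpolation \eqref{redpen}. Once these are seen to be simultaneously respected by the splitting, the elementary-sequence invariance of $\Phi$ does the rest, and the result is just \eqref{eq:suwa-interpolation} in disguise. Signs are handled automatically by the built-in $(-1)^p$ in the Čech-de Rham differential $d$.
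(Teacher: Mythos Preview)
Your proof is correct and follows essentially the same approach as the paper's own proof, which simply cites \eqref{eq:suwa-interpolation}, Lemma~\ref{lma:phiElementarySequence}, \eqref{rainfall} and \eqref{rainfall2} without spelling out the details. You have carefully unpacked exactly those ingredients: the face identification via the splitting \eqref{eq:DDeltadef} and the elementary-sequence invariance, and the sign bookkeeping matching the transgression identity to $\nabla=\delta+d$.
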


When $\Phi=e_\ell$, the above proposition is \cite[Lemma 2.2]{green}.

\begin{proof}
Note that \[
    \Phi(D^\Delta)_\Delta =\Phi(\tilde{D}^{\Delta,\alpha_0},\dots,\tilde{D}^{\Delta,\alpha_p}),
\]
where the right-hand side is defined by \eqref{eq:PhiD0Dp}.
It then follows by \eqref{eq:suwa-interpolation}, Lemma~\ref{lma:phiElementarySequence}, \eqref{rainfall}
and \eqref{rainfall2} that $\nabla \check \Phi(D) = 0$.
\end{proof}

\begin{remark}
One could alternatively have defined first \v Cech-de Rham cochains $\Phi(D)$
just for $\Phi=\elem_\ell$ being the elementary symmetric polynomials, and then defined
$\check{\Phi}(D)$ in terms of $\check{\elem}_\ell(D)$ through a formula corresponding to \eqref{schmetterling}.
This would yield a different definition of $\check{\Phi}(D)$, which would be cohomologous
to our definition by an explicit cochain, cf., \cite[Lemma~1.5 and Proposition~1.6]{SuwaCC}.
The results in this article would also hold for this alternative definition, but the one we have chosen seems
to be the most natural and convenient for our results.
\end{remark}

We say that a form on $U_\Delta\times \Delta_p$ has degree $(r,s,q)$
if it (in local coordinates) can be written as a sum of forms of the
form
\begin{equation*}
f dz_{i_1}\wedge \dots\wedge dz_{i_r} \wedge d\bar{z}_{j_1}\wedge\dots\wedge d\bar{z}_{j_s} \wedge dt_{k_1}\wedge \wedge dt_{k_q},
\end{equation*}
where $f$ is a smooth function.
We say that a connection on $\pi^* E^\Delta$ is a $(1,0)$-connection if its connection matrices
in frames induced by local holomorphic frames of $E^\Delta$ are of degree $(1,0,0)$. Note that when $p=0$, it coincides with the usual notion of $(1,0)$-connections.

Note that if the $D^{\alpha_j}$ are $(1,0)$-connections then so is
$D^{\Delta}$. Moreover, in view of Section ~\ref{apple}
note that if $D^{\alpha_j}$ is
compatible with $(E^{\alpha_j}, \varphi^{\alpha_j})$ for $j=0,\ldots, p$, then $D^{\Delta}$ is compatible with $(\pi^*E^\Delta,
\pi^*\varphi^\Delta)$.

\begin{lma} \label{lma:checkPhiBidegree}
Assume that $\G$, $\U$, $D$ and $\Phi$ are as above. Assume furthermore that for each $\alpha \in I$,
$D^\alpha$ is a $(1,0)$-connection. Then the elements of $\check{\Phi}(D)$ are sums of forms of bidegree $(\ell+r, s)$,
where $r,s\geq 0$  and $r+s+p=\ell$.
\end{lma}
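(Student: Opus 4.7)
The plan is a direct tridegree computation of $\Phi(D^\Delta)$ on $U_\Delta\times \Delta_p$, followed by fiber integration along $\pi$. First I would verify that $D^\Delta$ is a $(1,0)$-connection on $\pi^*E^\Delta$ in the sense of the paper. Since each $D^{\alpha_j}$ is $(1,0)$, so are all the induced connections $D^\Delta_{\alpha_j}$ on the elementary sequences (which are just direct sums of copies of the $D^{\alpha_j}$'s), and hence $\widetilde{D}^{\Delta,\alpha_j} = D^{\alpha_j}\oplus D^\Delta_{\alpha_j}$ is a $(1,0)$-connection on $E^\Delta$. Taking the affine combination $D^\Delta = \sum_{j=0}^p t_j\, \pi^*\widetilde{D}^{\Delta,\alpha_j}$ preserves this property: in a local holomorphic frame of $E^\Delta$, the connection form of $D^\Delta_k$ at level $k$ is $\theta_k = \sum_j t_j\, \pi^*\theta^{\Delta,\alpha_j}_k$, where each $\theta^{\Delta,\alpha_j}_k$ is of bidegree $(1,0)$ on $U_\Delta$; hence $\theta_k$ has tridegree $(1,0,0)$ on $U_\Delta\times\Delta_p$.

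Next I would decompose the curvature $\Theta(D^\Delta_k) = d\theta_k + \theta_k\wedge\theta_k$ by tridegree. Splitting $d = \partial + \bar\partial + d_t$ along the product, one obtains three pieces: $\partial\theta_k + \theta_k\wedge\theta_k$ of tridegree $(2,0,0)$, $\bar\partial\theta_k$ of tridegree $(1,1,0)$, and $d_t\theta_k = \sum_j dt_j\wedge\pi^*\theta^{\Delta,\alpha_j}_k$ of tridegree $(1,0,1)$. Thus every entry of $\Theta(D^\Delta_k)$ is a sum of forms of these three tridegrees.

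Since $\Phi(D^\Delta)$ is, through \eqref{schmetterling} and \eqref{eq:mixed-chern}, a polynomial of weighted degree $\ell$ in the matrix entries of the $\Theta(D^\Delta_k)$'s, every monomial in its expansion consists of $\ell$ curvature-entry factors. If $a$, $b$, $c$ of them are of tridegree $(2,0,0)$, $(1,1,0)$, $(1,0,1)$ respectively, with $a+b+c = \ell$, the resulting monomial has tridegree $(2a+b+c,\,b,\,c)$. Finally, $\pi_*$ integrates over the $p$-dimensional fiber $\Delta_p$, so only the components with simplex-degree $c = p$ survive, and they push forward to forms on $U_\Delta$ of bidegree $(2a+b+p,\,b)$. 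Writing $r = a$ and $s = b$ gives bidegree $(\ell+r,\,s)$ with $r,s \geq 0$ and $r+s+p = \ell$, which is the claim. The only step requiring care is the initial verification that the affine combination yields a $(1,0)$-connection in the paper's sense; once this is in place the argument is a straightforward degree count.
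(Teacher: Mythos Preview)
Your proposal is correct and follows essentially the same approach as the paper's proof: verify that $D^\Delta$ is a $(1,0)$-connection, compute the tridegree decomposition of the curvature into pieces of tridegree $(2,0,0)$, $(1,1,0)$, $(1,0,1)$, use that $\Phi(D^\Delta)$ is a degree-$\ell$ polynomial in the curvature entries to conclude it is a sum of forms of tridegree $(\ell+r,s,q)$ with $r+s+q=\ell$, and finish by fiber integration over $\Delta_p$. Your bookkeeping with the parameters $a,b,c$ is just a slightly more explicit version of the paper's degree count.
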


\begin{proof}
Let us fix $\Delta=(\alpha_0,\ldots, \alpha_p)\in N\U$.
Recall
that since the $D^{\alpha_j}$ are $(1,0)$-connections, then so are the
connections $\widetilde D^{\Delta, \alpha_j}$.
Since the statement is local we may
work in a
local trivialization.
Let $\theta_k^{\alpha_j}$ be the connection matrix of $\widetilde
D^{\Delta, \alpha_j}_k$; with a slight abuse of notation we will let
it denote also the connection matrix of $\pi^* \widetilde
D^{\Delta, \alpha_j}_k$. Then the connection matrix of $D^\Delta_k$
equals
\[
  \theta^\Delta_k= \sum_{j=0}^p t_j  \theta_k^{\alpha_j},
\]
cf.\ \eqref{redpen}.
Now the curvature matrix of $D^\Delta_k$ equals
\[
\Theta (D^\Delta_k) =
\theta_k^\Delta\wedge \theta_k^\Delta +
\sum_{j=0}^p ( dt_j \wedge \theta_k^{\alpha_j}+t_j \partial
\theta_k^{\alpha_j}+t_j\dbar \theta^{\alpha_j}_k).
\]
Since $\theta_k^\Delta$ is a $(1,0)$-form it
follows that $\Theta (D^\Delta_k)$ is a sum of
terms of degree $(2,0,0)$, $(1,0,1)$, and $(1,1,0)$. Since
$\elem_i(D^\Delta)$ is a polynomial of degree $j$ in the
entries of $\Theta (D^\Delta_k)$, $k=0,\ldots, N$
it follows that it is a sum of forms of
degree $(j+r, s, q), r+s+q=j$ (since each factor has at least degree
$1$ in $dz_j$). Since $\Phi$ is homogeneous of degree $\ell$,
$\Phi(D^\Delta)$ is a sum of forms of degree $(\ell+r, s, q)$,
$r+s+q=\ell$, where $r,s,q \geq 0$. It follows that  $\check \Phi(D)_\Delta=\int_{\Delta_p} \Phi
(D^\Delta)$ is a sum of forms of bidegree $(\ell+r, s)$, where $r,s\geq
0$ and $r+s+p=\ell$, since integration of a form of degree $(*,*,q)$ over $\Delta_p$ vanishes unless $q=p$.
\end{proof}

The next result shows that $[\check{\phi}(D)]$ only depends on $\mathcal{G}$ in a suitable sense.
To formulate the result, we consider two Stein open covers $\U_1=(U_\alpha)_{\alpha \in I_1}$ and $\U_2=(U_\beta)_{\beta \in I_2}$ of $M$.
We define a new cover $$\U_{12} =\U_1\cap \U_2 = (U_\alpha \cap U_\beta)_{(\alpha, \beta)\in I_1\times I_2}.$$
Let $r_1:I_1\times I_2\to I_1$ be the projection $(\alpha,\beta)\mapsto \alpha$ and define $r_2$ analogously; then for $j=1,2$,
$\U_{12}$ refines $\U_j$ by the refinement map $r_j$.

\begin{prop} \label{prop:Green-CdR-indep}
Assume that $\G$, $\U_1,\U_2,\U_{12},r_1,r_2$ and $\Phi$ are as above. Assume there is some $N\geq 0$ such that for $j=1,2$, and for each $\alpha \in I^j$, $\G|_{U_\alpha}$ admits a locally free resolution
$(E^{\alpha},\varphi^\alpha)$
\begin{equation}
0 \to E_N^\alpha \stackrel{\varphi^\alpha_N}{\longrightarrow} E_{N-1}^\alpha \stackrel{\varphi^\alpha_{N-1}}{\longrightarrow} \dots \stackrel{\varphi_2}{\longrightarrow} E_1^\alpha \stackrel{\varphi^\alpha_1}{\longrightarrow} E_0^\alpha
\stackrel{\varphi^\alpha_0}{\longrightarrow} \mathcal{G}|_{U_\alpha} \to 0,
\end{equation}
and assume that $E^\alpha$ is equipped with a connection $D^\alpha$.
For $j=1,2$, let $D^j=(D^\alpha)_{\alpha \in I^j}$.

Then there is a cochain $\check{\eta}^\Phi \in \oplus_{p+q=2\ell-1} \check{C}^p(\U_{12},\E^q)$ such that
\begin{equation} \label{eq:nablaPotential1}
    \nabla \check{\eta}^\Phi = r_2 \check{\Phi}(D^2) - r_1 \check{\Phi}(D^1).
\end{equation}
The elements of $\check{\eta}^\Phi$ are polynomials in the entries of
$\theta^\alpha$ and $d\theta^\alpha$, where $\alpha \in I_1 \coprod I_2$,
and $\theta^\alpha$ is the connection matrix of $D^\alpha$ in some local frame.

If each $D^\alpha$ is a $(1,0)$-connection, then
\begin{equation} \label{eq:etaBidegree}
  \check \eta^\Phi \in \bigoplus_{r\geq 0, ~ r+s+p=\ell-1}
  \check C^p({\U_{12}}, \E^{\ell+r, s}).
\end{equation}
\end{prop}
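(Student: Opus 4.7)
The plan is to mimic the construction of $\check\Phi(D)$ over a one-dimension-higher ``double simplex'' that interpolates between the two families of resolutions and connections, extracting $\check\eta^\Phi$ by fiber integration and deriving \eqref{eq:nablaPotential1} by Stokes' theorem.

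First I would produce, for each $\Delta = ((\alpha_0,\beta_0),\dots,(\alpha_p,\beta_p)) \in N\U_{12}$, a single locally free resolution $E^{\Delta,\sharp}$ of $\G|_{U_\Delta}$ containing, up to elementary summands, both pulled-back Green resolutions $E^{r_1\Delta}$ and $E^{r_2\Delta}$. This is obtained by applying Green's construction to the enlarged collection of $2(p+1)$ locally free resolutions $E^{\alpha_0},\dots,E^{\alpha_p},E^{\beta_0},\dots,E^{\beta_p}$ of $\G|_{U_\Delta}$, or equivalently by regarding $\Delta$ as a simplex of the disjoint-union cover indexed by $I_1 \sqcup I_2$. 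The analogues of \eqref{rainfall}--\eqref{rainfall2} yield isomorphisms $E^{\Delta,\sharp} \cong E^{r_j\Delta} \oplus F^\Delta_j$ for $j=1,2$, with $F^\Delta_j$ an elementary sequence in the bundles of the opposite family, compatible with restriction to sub-simplices of $\Delta$.

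Next, for each vertex $(\alpha_j,\beta_j)$ I would equip $E^{\Delta,\sharp}$ with two connections $\widetilde D^{\Delta,\alpha_j}$ and $\widetilde D^{\Delta,\beta_j}$ obtained by extending $D^{\alpha_j}$, respectively $D^{\beta_j}$, by the canonical connections on the elementary complement, exactly as in \eqref{eq:DDeltadef}. On $U_\Delta \times \Delta_p \times [0,1]$, with $u$ the coordinate on $[0,1]$, define
\begin{equation*}
    D^{\Delta,h} := \sum_{j=0}^p t_j \bigl((1-u)\,\pi^*\widetilde D^{\Delta,\alpha_j} + u\,\pi^*\widetilde D^{\Delta,\beta_j}\bigr)
\end{equation*}
on $\pi^*E^{\Delta,\sharp}$, and set
\begin{equation*}
    \check\eta^\Phi_\Delta := \int_{\Delta_p \times [0,1]} \Phi(D^{\Delta,h}),
\end{equation*}
with the product orientation. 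Applying Stokes to the identity $d\Phi(D^{\Delta,h})=0$ and decomposing $\partial(\Delta_p \times [0,1])$, with appropriate signs, into $\Delta_p\times\{1\}$, $\Delta_p\times\{0\}$, and $\partial\Delta_p \times [0,1]$, integration over the first two pieces yields (after discarding the elementary summands by Lemma~\ref{lma:phiElementarySequence}) the cochains $r_2\check\Phi(D^2)_\Delta$ and $r_1\check\Phi(D^1)_\Delta$, while integration over the third gives the $\delta$-contribution in exactly the same way as in the proof of Proposition~\ref{prop:Green-CdR}. Combining these pieces yields \eqref{eq:nablaPotential1}.

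The polynomial character of $\check\eta^\Phi$ in the entries of $\theta^\alpha$ and $d\theta^\alpha$ is immediate from \eqref{eq:mixed-chern}--\eqref{schmetterling} applied to the curvature matrix of $D^{\Delta,h}$, since the elementary summands in $\widetilde D^{\Delta,\alpha_j}$ and $\widetilde D^{\Delta,\beta_j}$ have curvature blocks built from the same $\theta^\alpha$'s. The bidegree statement \eqref{eq:etaBidegree} follows by the same counting as in Lemma~\ref{lma:checkPhiBidegree}: if each $D^\alpha$ is of type $(1,0)$, then so is $D^{\Delta,h}$ in the extended sense involving the $(dt, du)$-direction, so $\Phi(D^{\Delta,h})$ is a sum of forms of degree $(\ell+r,s,q)$ with $r+s+q=\ell$, and fiber integration over the $(p+1)$-chain $\Delta_p\times[0,1]$ forces $q=p+1$, giving $r+s+p=\ell-1$. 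I expect the main obstacle to be the first step, namely verifying that Green's construction yields a functorial two-sided simplicial resolution with the required compatible decompositions along every sub-simplex of $\Delta \in N\U_{12}$; once this is in place, Stokes' theorem combined with an argument parallel to that of Proposition~\ref{prop:Green-CdR} completes the proof.
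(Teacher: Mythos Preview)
Your approach is correct but takes a genuinely different route from the paper. Both arguments begin the same way: one passes to the disjoint union cover $\V=\U_1\coprod\U_2$ and builds a Green simplicial resolution over $N\V$ that restricts, on pure simplices $\Delta\in N\U_j$, to the given simplicial resolutions over $\U_j$. From here the two arguments diverge. The paper simply sets $\check\eta^\Phi:=h\,\check\Phi_\V(D)$, where $h$ is the standard \v Cech homotopy \eqref{semla} associated to the two refinements $\varrho_1,\varrho_2:I_1\times I_2\to I_1\coprod I_2$, and then \eqref{eq:nablaPotential1} is the one-line computation $\nabla h\check\Phi_\V=(\nabla h+h\nabla)\check\Phi_\V=\varrho_2\check\Phi_\V-\varrho_1\check\Phi_\V=r_2\check\Phi(D^2)-r_1\check\Phi(D^1)$, using Proposition~\ref{prop:Green-CdR} and \eqref{almond}. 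Concretely, $(\check\eta^\Phi)_\Delta$ is an alternating sum over $k$ of integrals of $\Phi$ over $\Delta_{p+1}$, each computed on the Green resolution of the mixed $(p{+}1)$-simplex $(\alpha_0,\dots,\alpha_k,\beta_k,\dots,\beta_p)$. Your construction instead fixes the single large resolution $E^{\Delta,\sharp}$ of the full $(2p{+}1)$-simplex and produces $\check\eta^\Phi_\Delta$ as one integral over the prism $\Delta_p\times[0,1]$, with the identity \eqref{eq:nablaPotential1} coming from Stokes together with Lemma~\ref{lma:phiElementarySequence} and the compatibilities \eqref{rainfall}--\eqref{eq:DDeltadef}. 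The two cochains are not the same (your prism parametrization $(t_j(1-u),t_ju)$ is affinely different from the barycentric coordinates on each $\Delta_{p+1}$), but both satisfy \eqref{eq:nablaPotential1} and share the polynomial and bidegree properties for the same reasons. The paper's approach is shorter because it recycles Proposition~\ref{prop:Green-CdR} and the purely algebraic identity \eqref{almond}; your approach is more geometric and self-contained but obliges you to redo the boundary bookkeeping. The ``main obstacle'' you flag---the existence of the compatible simplicial resolution on $\V$---is exactly what the paper also invokes (citing the proof of Theorem~2.4 in \cite{green}), so neither approach avoids it.
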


\begin{proof}
We let $$\V= \U_{1} \coprod \U_{2}=(U_\alpha)_{\alpha\in I_1\coprod I_{2}}.$$
For $j=1,2$ let $\rho_j:I_j \to I_1\coprod I_2$ be the inclusion $\alpha\mapsto
\alpha$; then $\U_j$ refines $\V$ by the refinement map $\rho_j$.
Let $\varrho_j=\rho_j\circ r_j : I_1\times I_2 \to I_1\coprod I_2$ be
the corresponding refinement maps of the refinement of $\V$ in $\U$, and let
$h=h^p:\check C^p(\V, \E^\bullet)\to \check C^{p-1}(\U_{12}, \E^\bullet)$
be the associated homotopy \eqref{semla}.

From $(E^\alpha, \varphi^\alpha)_{\alpha\in I_1 \coprod I_2}$ one can construct a
simplicial resolution $(E^\Delta, \varphi^\Delta)_{\Delta \in N\V}$ of
$\mathcal{G}$ with respect to $\V$ such that on ``pure'' subsets $U_\Delta,
\Delta \in N\U_j$, it coincides with the original simplicial resolutions
$(E^\Delta, \varphi^\Delta)_{\Delta\in N\U_j}$ with respect to
$\U_j$, see, e.g., the proof of Theorem ~2.4 in \cite{green}.

As above we equip $(E^\Delta, \varphi^\Delta)_{\Delta\in N\V}$ with the connections
$(D^{\Delta}_N, \ldots, D^\Delta_0)$ constructed as above,
and let $D=(D^\alpha)_{\alpha \in I_1 \coprod I_2}$.
We denote the \v Cech-de Rham cocycle $\check \Phi( D)$
by $\check \Phi_\V=\check \Phi_\V(D)$ to emphasize the dependence on $\V$.
Then, since $(E^\Delta, \varphi^\Delta)_{\Delta\in N\V}$ coincides with
$(E^\Delta, \varphi^\Delta)_{\Delta\in N\U_j}$ for $\Delta \in
N\U_j$, it follows that for $\Delta \in N\U_j$, $\check
\Phi_\V(D)_\Delta= \check \Phi_{\U_j}(D^j)_\Delta$, where $\check
  \Phi_{\U_j} =\check \Phi_{\U_j}(D)$ denotes the \v Cech-de Rham cocycle associated with the
  simplicial resolutions with respect to $\U_j$.
In other words
\begin{equation}\label{mardigras}
    \rho_j \check \Phi _\V = \check \Phi _{\U_j}.
\end{equation}

Let
\[
    \check \eta^\Phi = h \check \Phi_\V(D) \in \bigoplus_{p+q=2\ell-1} \check
C^{p}(\U_{12}, \E^q).
\]
Then
\begin{equation}\label{kremla}
    \nabla \check \eta^\Phi = (\nabla h+ h\nabla) \check \Phi_\V =
    \varrho_2 \check \Phi_\V - \varrho_1 \check \Phi_\V =
    r_2 \check \Phi_{\U_2} - r_1 \check \Phi_{\U_1},
\end{equation}
where we have used that $\check \Phi_\V$ is a cocycle for the
first equality, \eqref{almond} for the second equality, and
\eqref{mardigras} for the last equality.

By the definition of $\Phi_\V$ and $h$, \eqref{semla}, it follows that the elements of
$\check{\eta}^\Phi$ are polynomials in the entries of $\theta^\alpha$ and $d\theta^\alpha$,
where $\alpha \in I_1 \coprod I_2$, and $\theta^\alpha$ is the connection matrix
of $D^\alpha$ in some local frame.

Assume now that for each $\alpha\in I_1\coprod I_2$, $D^\alpha$ is a $(1,0)$-connection.
By Lemma~\ref{lma:checkPhiBidegree},
\[
    \check \Phi_\V \in \bigoplus_{r\geq 0, ~ r+s+p=\ell}
    \check C^p(\V, \E^{\ell+r, s}).
\]
Moreover, note that $h$ maps $\check C^p(\V, \E^{\ell+r, s})$ to
$\check C^{p-1}({\U_{12}}, \E^{\ell+r, s})$, and thus
\[
  \check \eta^\Phi \in \bigoplus_{r\geq 0, ~ r+s+p=\ell-1}
  \check C^p({\U_{12}}, \E^{\ell+r, s}).
\]
It then follows that \eqref{eq:etaBidegree} holds.
\end{proof}

If $\U_1 = \U_2 = \U$ in the above proposition, then one may alternatively find $\check{\eta}^\Phi \in \oplus_{p+q=2\ell} \check{C}^p(\U,\E^q)$ such that
\begin{equation}
    \nabla \check{\eta}^\Phi = \check{\Phi}(D^2) - \check{\Phi}(D^1).
\end{equation}
This follows from making minor adjustments to the above proof, i.e.,
letting $\U_{12} = \U$, and letting $r_1$ and $r_2$ be the identity maps.

\begin{remark}\label{biathlon}
Assume that $\Delta\in N\U$ is a vertex, i.e.,
  $\Delta=(\alpha)$.
Then note that $\Delta_p=\{1\}$. In this case we identify
$U_\Delta\times \Delta_p$ with $U_\Delta$ and then $\pi$ is just the
identity. Then $D^\Delta = D^\alpha$ and hence $\check \Phi(D)_\Delta
= \Phi(D^\alpha)$.
  \end{remark}

  \subsection{Characteristic forms}\label{ronaldo}
  Assume that $(\psi_\alpha)_{\alpha\in I}$ is a partition of unity
  subordinate to $\U$ and let $\Psi$ be the corresponding morphism \eqref{cream}. Then we define the
  characteristic form $\Phi(D)$ as
\begin{equation}\label{cardamom}
    \Phi(D) = \Psi(\check{\Phi}(D)).
\end{equation}
Since $\check\Phi(D)$ is a \v Cech-de Rham cocycle of total degree
$2\ell$ and $\Psi$ is an isomorphism on cohomology, it follows that
$\Phi(D)$ is a closed form of degree $2\ell$.
The de Rham cohomology class that $\Phi(D)$ defines indeed only depends on $\mathcal{G}$, cf.,
the proof of Lemma~\ref{hamburger} below.

\begin{ex}\label{board}
 Assume that
\begin{equation} \label{down}
     0 \to E_N \xrightarrow[]{\varphi_N}  \cdots\xrightarrow[]{\varphi_1}  E_0\xrightarrow[]{} \G \to 0
   \end{equation}
   is a locally free resolution of $\G$ in $M$, and assume that
   $\U=(U_\alpha)_{\alpha\in I}$ is a Stein cover of
$M$. Then $(E^\Delta, \varphi^\Delta)_{\Delta\in N\U}$, where
$(E^\Delta, \varphi^\Delta)=(E, \varphi)$, is a simplicial resolution
of $\G$ with respect to $\U$. In this case each $E^\Delta_\alpha$ is the zero complex, cf.\ \eqref{rainfall}.

Assume that $(E, \varphi)$ is equipped with a connection $D$, and
for $\alpha\in I$, equip $(E^\alpha, \varphi^\alpha)$ with the
connection $D^\alpha:=D$.
Take $\Delta=(\alpha_0, \ldots, \alpha_p)$, $p>0$,
and let $D^\Delta$ be the connection on $\pi^*E^\Delta$ as defined in
Section ~\ref{pingpong}. Using the notation in that section, note that
$\widetilde D^{\Delta, \alpha}=D^\alpha=D$ and thus, in view of
\eqref{redpen} $D^\Delta = \pi^*D$.
If follows that $\Phi(D^\Delta)$ is independent of $t$ and thus
$\check \Phi (D)_\Delta =0$. If $\Delta=(\alpha)$, then $D^\Delta=D$
and thus $\check \Phi(D)_{(\alpha)}=\Phi(D)|_{U_\alpha}$, where
$\Phi(D)$ within this example denotes
the characteristic form \eqref{schmetterling}, cf.\
Remark ~\ref{biathlon} defined from \eqref{down}.

Assume that $(\psi_\alpha)_{\alpha\in I}$ is a partition of unity
subordinate to $\U$, and let $\Psi$ be the corresponding morphism
\eqref{cream}. Then the associated characteristic form
\eqref{cardamom}, which we within this example denote by
$\Phi_{Green}(D)$,
is given by
\[
  \Phi_{Green}(D) = \Psi (\check \Phi(D) ) =
  \sum_{\alpha\in I} \psi_\alpha \check \Phi(D)_{(\alpha)} =
  \sum_{\alpha\in I} \psi_\alpha \Phi(D) =
  \Phi(D).
\]
  \end{ex}

\begin{lma}\label{candy}
  Assume that for each $\alpha\in I$, $D^\alpha$ is a
  $(1,0)$-connection. Then $\Phi(D)$ is a
  sum of forms of bidegree $(\ell+j, \ell-j)$, $0\leq j \leq \ell$.
\end{lma}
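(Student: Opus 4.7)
The plan is to reduce the statement to bidegree bookkeeping through the explicit formula $\Phi(D) = \Psi(\check{\Phi}(D))$. Since $\check{\Phi}(D)$ is a $\nabla$-cocycle by Proposition~\ref{prop:Green-CdR}, I would first invoke \eqref{cream2} to write
\[
    \Phi(D) = \sum_{\alpha \in I} \psi_\alpha \Bigl(\sum_{p \geq 0} (d\psi)^p \check{\Phi}(D)_p\Bigr)_\alpha,
\]
which reduces the question to understanding the bidegrees of each summand $(d\psi)^p \check{\Phi}(D)_p$.

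Next I would appeal to Lemma~\ref{lma:checkPhiBidegree}, which under the $(1,0)$-hypothesis tells me that $\check{\Phi}(D)_p$ is a sum of elements lying in $\check{C}^p(\U, \E^{\ell + r, s})$ with $r, s \geq 0$ and $r + s + p = \ell$. From here, the key observation is that the operator $\psi$ is (up to sign) multiplication by the partition-of-unity functions $\psi_\alpha$, which are of bidegree $(0, 0)$ and hence preserve form bidegree, while $d = \partial + \dbar$ sends bidegree $(a, b)$ to $(a+1, b) \oplus (a, b+1)$.

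Iterating, $(d\psi)^p$ applied to a cochain of bidegree $(\ell + r, s)$ produces a sum of forms of bidegree $(\ell + r + k, s + p - k)$ for $0 \leq k \leq p$. Setting $j := r + k$, the constraints $r, s \geq 0$ and $r + s + p = \ell$ immediately give $j \geq 0$ and $\ell - j = s + p - k \geq 0$, i.e., $0 \leq j \leq \ell$. The final multiplication by $\psi_\alpha$ in the assembly of $\Psi$ via \eqref{cream} does not alter bidegree, so each term contributes bidegree $(\ell + j, \ell - j)$ with $0 \leq j \leq \ell$, as desired.

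There is no real obstacle: the proof is a routine propagation of bidegrees along the homotopy operator, and the only care required is to keep track of how $d$ distributes into its $(1,0)$ and $(0,1)$ parts while confirming that the bound $j \leq \ell$ comes from $s + p - k \geq 0$ rather than from any more subtle constraint.
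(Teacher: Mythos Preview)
Your proposal is correct and follows essentially the same approach as the paper's proof: both reduce to Lemma~\ref{lma:checkPhiBidegree} and then track how $\Psi$ affects bidegrees. The only difference is that the paper simply asserts that $\Psi$ maps $\check C^p(\U,\E^{q_1,q_2})$ to forms of bidegree $(q_1+p_1,q_2+p_2)$ with $p_1+p_2=p$, while you unpack this claim by decomposing $(d\psi)^p$ explicitly---your argument is just a more detailed version of theirs.
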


\begin{proof}
 Note that $\Psi$ maps an element in $\check C^p(\U, \E^{q_1, q_2})$
  to a sum of forms of bidegree $(q_1+p_1, q_2+p_2)$, where
  $p_1,p_2\geq 0$ and $p_1+p_2=p$.
  It then follows by Lemma~\ref{lma:checkPhiBidegree} that $\Phi(D)=\Psi(\check \Phi(D))$
  is a sum of forms of bidegree $(\ell+r+p_1, s+p_2)$, where $r, p_1, s, p_2\geq 0$ and
  $\ell+r+p_1+s+p_2=2\ell$, i.e., $\Phi(D)$ is of the
  desired form.
\end{proof}

\section{Characteristic forms of coherent sheaves}

Assume that $\G$ is a coherent sheaf on $M$, and that $Z$ denotes the set where $\G$ is not
a vector bundle, and that either $\supp \G$ has codimension $\geq 1$ and then let $\kappa = n$,
or that $\G = N\F$, where $\F$ is a holomorphic foliation of rank $\kappa$ on $M$.
Assume also that $\U=(U_\alpha)_{\alpha\in I}$ is a Stein open cover and $N \in \N$
is such that in each $U_\alpha$ there is a locally free resolution
$(E^\alpha,\varphi^\alpha)$ of $\G|_{U_\alpha}$ of length $\leq N$.
Let $(E^\Delta, \varphi^\Delta)_{\Delta\in N\U}$ be the corresponding
simplicial resolution.
Assume also that $\Phi$ is a symmetric polynomial in $\C[z_1,\ldots,z_n]$ of degree $\ell$,
where $n-\kappa < \ell \leq n$.

In this section, all the results, except for the last part of Lemma~\ref{mercredi}
which concerns connections of type $(1,0)$, hold when the resolutions $(E^\alpha,\varphi^\alpha)$
are complexes of smooth or real analytic vector bundles, cf., Remark~\ref{rem:smoothOrRealAnalytic}.

\begin{lma}\label{mercredi}
Assume that $M$, $\G$, $Z$, $\Phi$, $\kappa$, $\U=(U_\alpha)_{\alpha \in I}$,
$(E^\alpha,\varphi^\alpha)$ are as above.
Moreover, assume that there is a closed neighborhood $\Sigma\subset M$ of $Z$,
and for each $\alpha\in I$, assume that $E^\alpha$ is equipped with a connection $D^\alpha$
which is compatible with $(E^\alpha,\varphi^\alpha)$ over $U_\alpha \setminus \Sigma$,
and in case $\G = N\F$, assume that the connection $D_{-1}^\alpha$ on $N\F|_{U_\alpha}$
induced by $D_0^\alpha$ on $E_0$ is a basic connection on $U_\alpha \setminus \Sigma$.
Let $\Phi(D)$ be the corresponding characteristic form
\eqref{rectangle}. Then $\check{\Phi}(D)$ and $\Phi(D)$ have support in $\Sigma$.
\end{lma}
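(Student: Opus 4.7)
The plan is to prove $\check\Phi(D)_\Delta = 0$ on $U_\Delta\setminus\Sigma$ for each simplex $\Delta=(\alpha_0,\ldots,\alpha_p)\in N\U$, which then implies the same for $\Phi(D)=\Psi(\check\Phi(D))$ since the operator $\Psi$ in \eqref{cream} is built from the partition of unity and is therefore local. Thus the core task is to show $\pi_*\Phi(D^\Delta) = 0$ on $U_\Delta\setminus\Sigma$ for each $\Delta$.

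First I would verify that the connection $D^\Delta = \sum_j t_j \pi^*\widetilde D^{\Delta,\alpha_j}$ on $\pi^*E^\Delta$ is compatible with $\pi^*(E^\Delta,\varphi^\Delta)$ on $(U_\Delta\setminus\Sigma)\times\Delta_p$. By \eqref{rainfall}, $\widetilde D^{\Delta,\alpha_j}$ is the sum of $D^{\alpha_j}$ and the canonical compatible connection on the elementary sequence $E^\Delta_{\alpha_j}$, so the hypothesis that $D^{\alpha_j}$ is compatible with $(E^{\alpha_j},\varphi^{\alpha_j})$ outside $\Sigma$ gives compatibility of $\widetilde D^{\Delta,\alpha_j}$ with $(E^\Delta,\varphi^\Delta)$ there. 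Since the condition $D\varphi_k=0$ is affine in $D$ and $\sum_j t_j = 1$, the convex combination $D^\Delta$ remains compatible. Applying Lemma~\ref{lemma:exact-curvature} to this real family over $\Delta_p$ yields $\Phi(D^\Delta) = \Phi(D^\Delta_{-1})$ on $(U_\Delta\setminus\Sigma)\times\Delta_p$, where $D^\Delta_{-1} = \sum_j t_j \pi^*D^{\alpha_j}_{-1}$ is the induced connection on the cokernel (the elementary sequence contributes nothing there).

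It remains to show $\Phi(D^\Delta_{-1}) = 0$. In the sheaf case ($\kappa=n$) the cokernel is zero outside $Z\subseteq\Sigma$, so the form vanishes for any $\Phi$ of degree $\geq 1$. The foliation case is the main obstacle. Since the basic condition \eqref{eq:basicCondition} is linear in $D$ and $\sum_j t_j = 1$, the interpolated $D^\Delta_{-1}$ is basic in a parameterized sense along horizontal directions; the key observation is that the contraction $\iota_u A^{(j)}$ of each connection form with $u\in T\F$ is forced by \eqref{eq:basicCondition} to equal a single $\End(N\F)$-valued function $c(u)$ independent of $j$. Using $\sum_j dt_j = 0$, the mixed curvature piece can be rewritten as $\sum_{j\geq 1} dt_j\wedge \pi^*(A^{(j)}-A^{(0)})$, with $A^{(j)}-A^{(0)}$ a basic $(1,0)$-form valued in the conormal bundle $N^*\F$ (of complex rank $n-\kappa$) tensored with $\End(N\F)$; a parallel computation, using that each $\iota_u A^{(j)}$ is the same $c(u)$, shows that the purely $M$-directional curvature also factors through $N^*\F$. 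Consequently every summand of $\Phi(D^\Delta_{-1})$ contains at least $\ell$ wedge factors from $N^*\F$, so for $\ell > n-\kappa$ it vanishes by the same rank argument that underlies Theorem~\ref{thm:vanishing}. The main technical hurdle is carrying out this curvature computation cleanly so that the $N^*\F$-factor count is manifest in every summand, in particular handling the $dt$-components through the identity $\sum_j dt_j=0$ so that the rank vanishing closes the argument.
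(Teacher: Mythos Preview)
Your proposal is correct and follows essentially the same route as the paper: reduce to showing $\Phi(D^\Delta)=\Phi(D^\Delta_{-1})$ on $(U_\Delta\setminus\Sigma)\times\Delta_p$ via compatibility and Lemma~\ref{lemma:exact-curvature}, then argue vanishing of $\Phi(D^\Delta_{-1})$ in each of the two cases. The only difference is packaging: the paper isolates the foliation-case vanishing for the interpolated connection as a separate lemma (Lemma~\ref{bottom}), proved by noting that in an adapted local frame every basic connection matrix already lies in the ideal $(dz_{\kappa+1},\dots,dz_n)$, which makes your ``$N^*\F$-factor count'' immediate without the detour through differences $A^{(j)}-A^{(0)}$ and the relation $\sum_j dt_j=0$.
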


For the proof in the case when $\G = N\F$, we need the following slight generalization
of Baum-Bott's vanishing theorem, Theorem ~\ref{thm:vanishing}.

\begin{lma}\label{bottom}
Assume that $\F$ is a regular foliation of rank $\kappa$ on a complex
manifold $M$ of dimension $n$, that $\Phi$ is a homogeneous
symmetric polynomial $\Phi \in \C[z_1,\ldots,z_n]$ of degree $\ell$
with $n-\kappa < \ell \leq n$,
and that $D^0,\ldots, D^p$ are basic connections on $N\F$.
Let $\Delta_p$ be the simplex \eqref{snowfall}, let $\pi$ denote the
projection $M\times \Delta_p\to M$, and let $D$ be the connection
\[
D=\sum_{j=0}^pt_j \pi^* D^j
\]
on $M\times \Delta_p$; here $\pi^*D^j$ denotes the pullback connection.
Then
$$\Phi(D) = 0 \quad \text{on} \quad M \times \Delta_p.$$
\end{lma}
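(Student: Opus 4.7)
The plan is to adapt the ideal-theoretic local computation underlying the classical Baum--Bott vanishing theorem (Theorem~\ref{thm:vanishing}) to the parametrized setting on $M \times \Delta_p$. Since $\F$ is regular and the statement is local on $M$, I would work in a foliation chart $U \subset M$ with holomorphic coordinates $(x_1,\ldots,x_\kappa,y_1,\ldots,y_{n-\kappa})$ such that $T\F|_U = \langle \partial/\partial x_1,\ldots,\partial/\partial x_\kappa\rangle$, together with the local holomorphic frame $e_i := \varphi_0(\partial/\partial y_i)$ of $N\F|_U$.

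In this frame, the connection matrix $\theta^j$ of each basic $(1,0)$-connection $D^j$ has entries in the ideal $J$ of smooth forms on $U$ generated by the horizontal holomorphic $1$-forms $dy_1,\ldots,dy_{n-\kappa}$. Being of type $(1,0)$ kills every conjugate differential, while the basic condition combined with $[\partial/\partial x_k,\partial/\partial y_i]=0$ forces $i_{\partial/\partial x_k}\theta^j = 0$, ruling out all $dx_k$-components as well. Pulling back along $\pi\colon U \times \Delta_p \to U$, the connection matrix $\theta = \sum_j t_j \pi^*\theta^j$ of $D$ still has all its entries in the pulled-back ideal $J$, now viewed inside the algebra of smooth forms on $U \times \Delta_p$.

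The crucial check is that every entry of the curvature $\Theta(D) = d\theta + \theta \wedge \theta$ also lies in $J$, even though $d$ now contains $\Delta_p$-directions. The piece $\theta \wedge \theta$ is manifestly in $J^2 \subset J$. For the exterior derivative, $d(t_j \pi^* \theta^j) = dt_j \wedge \pi^*\theta^j + t_j\,\pi^*(d\theta^j)$; the first summand lies in $J$ because $\pi^*\theta^j \in J$, and the second lies in $J$ because $\theta^j$ has the shape $\sum g_k\,dy_k$, whence $d\theta^j = \sum dg_k \wedge dy_k \in J$. Thus the $dt_j$-factors coming from differentiating the barycentric coordinates are harmlessly absorbed into $J$, which is the only point where the parametrized setting differs from the classical one.

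Since $\Phi$ is homogeneous of degree $\ell$, the form $\Phi(D)$ is a polynomial of degree $\ell$ in the entries of $\Theta(D)$ and therefore lies in $J^\ell$. But $J$ is generated over $C^\infty(U\times\Delta_p)$ by the $n-\kappa$ one-forms $dy_1,\ldots,dy_{n-\kappa}$, so any wedge of more than $n-\kappa$ elements of $J$ vanishes for dimensional reasons. For $\ell > n-\kappa$ this forces $\Phi(D) = 0$ on $U \times \Delta_p$, and since such foliation charts cover the regular foliated manifold $M$, we conclude $\Phi(D) = 0$ on $M \times \Delta_p$. The main (and essentially only) subtlety is the verification that the $dt_j$-terms remain in the horizontal ideal $J$; once that is checked, the proof reduces to the same dimensional count that underlies Theorem~\ref{thm:vanishing}.
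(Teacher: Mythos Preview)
Your proof is correct and follows essentially the same approach as the paper's: both work in local foliation coordinates, observe that the connection matrices of the basic connections lie in the ideal generated by the $n-\kappa$ ``transverse'' holomorphic $1$-forms, verify that this persists for the curvature of the interpolated connection on $M\times\Delta_p$ (including the $dt_j$-terms), and conclude by the degree count $\ell>n-\kappa$. Your write-up is in fact more explicit than the paper's, which simply refers back to the computation in \cite[Proposition~3.27]{baum-bott} for the key facts.
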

\begin{proof}
This follows essentially by the arguments in the proof of
\cite[Proposition~3.27]{baum-bott}. As in that proof, choose local
coordinates $z_1,\ldots, z_n$ on $M$ such that $T\F$ is generated by
$\partial /\partial z_1, \ldots, \partial/\partial z_\kappa$. Then for
each $j=0,\ldots, p$, by
the arguments in that proof each entry of the connection matrix
$\theta_j$ of $D^j$ is in the ideal generated by
$dz_{\kappa+1},\ldots, dz_n$. It follows that each entry the connection matrix
$\theta=\sum t_j \pi^* \theta_j$ of $D$ is in the ideal $\widetilde I$
generated by
$\pi^* dz_{\kappa+1},\ldots, \pi^* dz_n$, and thus each entry in the
curvature matrix $\Theta (D)$ is in $\widetilde I$. As in that proof it
follows that $\Phi(D) = 0$ since
$\ell>n-\kappa$.
\end{proof}

\begin{proof}[Proof of Lemma ~\ref{mercredi}]
Take $\Delta = (\alpha_0,\ldots,\alpha_p)$, $p \geq 0$, and let
$\tilde{D}^\Delta$ be the connection on $\pi^* E^\Delta$ associated to $(\tilde{D}^\alpha)$,
as defined in Section~\ref{pingpong}.
Since $(D^{\Delta}_N, \ldots,  D^{\Delta}_0)$ is
compatible with $(\pi^* E^\Delta, \pi^* \varphi^\Delta)$ in
$(U_\Delta\times \Delta_p)\setminus \pi^{-1} \Sigma$, it follows by
Lemma ~\ref{lemma:exact-curvature} that
\[
\Phi (D^{\Delta}) = \Phi( D^\Delta_{-1})
\]
in $(U_\Delta\times \Delta_p)\setminus \pi^{-1} \Sigma$, where $D^\Delta_{-1}$ is the
connection on $\pi^* \G|_{U_\Delta \setminus \Sigma}$ induced by $D^\Delta_0$.
In case $\supp \G$ has codimension $\geq 1$, then $\G|_{U_\Delta \setminus \Sigma } = 0$,
so $\Phi ( D^\Delta_{-1})=0$.
Otherwise, $\G = N\F$, and then by Lemma ~\ref{bottom} $\Phi ( D^\Delta_{-1} )=0$
in $(U_\Delta\times \Delta_p)\setminus \pi^{-1} \Sigma$.
In any case, $\Phi (D^{\Delta}) =0$ in $(U_\Delta\times \Delta_p)\setminus \pi^{-1} \Sigma$.
It follows that $\check\Phi(D)_\Delta$ has support in $U_\Delta\cap
\Sigma$ and consequently $\Phi(D)$ has support in $\Sigma$ as well, cf.\ \eqref{cream}.
\end{proof}

\begin{lma}\label{hamburger}
Assume that $M, \G$, $Z$, $\Sigma$, $\kappa$ and $\Phi$ are as above.
Assume that there is some $N \in \N$ such that for $j=1,2$,
$\U_j=(U_\alpha)_{\alpha \in  I_j}$ is a Stein open cover
of $M$ such that for each $\alpha\in I_j$, there is a locally free
resolution $(E^\alpha, \varphi^\alpha)$ of $\G|_{U_\alpha}$ of length $\leq N$.
Assume that for any $\alpha\in I_j$, $j=1,2$, $E^\alpha$ is
equipped with a connection $D^\alpha$ which is compatible with
$(E^\alpha, \varphi^\alpha)$ in $U_\alpha \setminus \Sigma$,
and in case $\G = N\F$, then the connection $D_{-1}^\alpha$ on $N\F|_{U_\alpha}$
induced by $D_0^\alpha$ on $E_0$ is a basic connection on $U_\alpha \setminus \Sigma$.
Finally, for $j=1,2$, assume that $(\psi_\alpha)_{\alpha\in I_j}$ is
a partition of unity subordinate to $\U_j$.

Let $\Phi(D_j)$ denote the form \eqref{cardamom} associated to $D_j$ and $(\psi_\alpha)_{\alpha_j \in I_j}$.
Then there is a form $\eta^\Phi$ with support in $\Sigma$ such that
\begin{equation}\label{stenmark}
  d\eta ^\Phi = \Phi(D_1) -\Phi_1(D_2).
\end{equation}
The form $\eta^\Phi$ is locally given as a polynomial in the entries of $\theta^\alpha$,
$d\theta^\alpha$ and $\psi_\alpha$, where $\alpha \in I_1 \coprod I_2$,
and $\theta^\alpha$ is the connection matrix of $D^\alpha$ in some local frame.

Furthermore, if each $D^\alpha$ is of type $(1,0)$, then $\eta^\Phi$ is a sum of forms
of bidegree $(\ell+i,\ell-1-i)$ for $0\leq i \leq \ell-1$.
\end{lma}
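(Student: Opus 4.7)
The idea is to lift to the \v Cech--de Rham bicomplex of the common refinement $\U_{12} := \U_1\cap\U_2$ via Proposition~\ref{prop:Green-CdR-indep}, and then descend to global forms by a carefully chosen partition of unity on $\U_{12}$, keeping track of support at each step.

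First, apply Proposition~\ref{prop:Green-CdR-indep} to obtain a cochain $\check\eta^\Phi$ on $\U_{12}$, of total degree $2\ell-1$, with $\nabla\check\eta^\Phi = r_2 \check\Phi(D_2) - r_1 \check\Phi(D_1)$. Inspecting the proof of that proposition, $\check\eta^\Phi$ is produced as $h\,\check\Phi_\V(D)$ where $\V = \U_1 \coprod \U_2$ and $h$ is built from the restriction formula~\eqref{semla}. The simplicial resolution over $\V$ meets the hypothesis of Lemma~\ref{mercredi}, which gives $\supp \check\Phi_\V(D) \subset \Sigma$, and hence $\supp\check\eta^\Phi \subset \Sigma$. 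The polynomial description of $\check\eta^\Phi$ in $\theta^\alpha, d\theta^\alpha$ is furnished directly by Proposition~\ref{prop:Green-CdR-indep}.

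Next, I choose the partition of unity $\tilde\psi_{(\alpha,\beta)} := \psi_\alpha \psi_\beta$ on $\U_{12}$ (the product of those on $\U_1$ and $\U_2$), and let $\Psi_{12}$ be the corresponding morphism~\eqref{cream}. The key identity is that, for any \v Cech--de Rham cocycle $\check\gamma$ on $\U_j$,
\begin{equation*}
    \Psi_{12}(r_j \check\gamma) \;=\; \Psi_j(\check\gamma), \qquad j=1,2,
\end{equation*}
where $\Psi_j$ is the morphism \eqref{cream} on $\U_j$. This follows from the compatibility of homotopy operators $\tilde\psi \circ r_j = r_j \circ \psi_j$ (where $\psi_j$ denotes the homotopy preceding \eqref{cream'} on $\U_j$), which is a direct one-line calculation exploiting $\sum_\beta \psi_\beta = 1$ for $j=1$ and $\sum_\alpha \psi_\alpha = 1$ for $j=2$. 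Setting $\eta^\Phi := -\Psi_{12}(\check\eta^\Phi)$ and using that $\Psi_{12}$ commutes with $\nabla$ and $d$, one obtains $d\eta^\Phi = \Phi(D_1) - \Phi(D_2)$. Since $\Psi_{12}$ only multiplies entries of $\check\eta^\Phi$ by smooth factors of the form $\tilde\psi_{(\alpha,\beta)}$ and $d\tilde\psi_{(\alpha,\beta)}$, both the support in $\Sigma$ and the polynomial description of $\eta^\Phi$ in $\theta^\alpha, d\theta^\alpha, \psi_\alpha$ follow at once.

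For the bidegree claim, if each $D^\alpha$ is of type $(1,0)$, Proposition~\ref{prop:Green-CdR-indep} places $\check\eta^\Phi$ in $\bigoplus_{r\geq 0,\ r+s+p=\ell-1} \check C^p(\U_{12}, \E^{\ell+r,s})$; combined with the observation (from the proof of Lemma~\ref{candy}) that $\Psi_{12}$ sends $\check C^p(\U_{12}, \E^{q_1,q_2})$ into sums of forms of bidegree $(q_1+p_1, q_2+p_2)$ with $p_1+p_2=p$, this yields the prescribed bidegrees $(\ell+i, \ell-1-i)$ for $0\leq i\leq\ell-1$. \emph{The main obstacle} is the compatibility $\Psi_{12}\circ r_j = \Psi_j$: without this exact identity — which depends crucially on the product choice $\tilde\psi_{(\alpha,\beta)} = \psi_\alpha\psi_\beta$ — one would only conclude $d\eta^\Phi = \Phi(D_1) - \Phi(D_2)$ modulo an additional exact form, and reabsorbing that correction while preserving support in $\Sigma$ would require a further homotopy argument.
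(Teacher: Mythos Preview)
Your proposal is correct and follows essentially the same route as the paper: both invoke Proposition~\ref{prop:Green-CdR-indep} to obtain $\check\eta^\Phi = h\,\check\Phi_\V$, push it down via $\Psi_{\U_{12}}$ for the product partition of unity $\psi_{(\alpha,\beta)}=\psi_\alpha\psi_\beta$, and use the compatibility $\Psi_{\U_{12}}\circ r_j = \Psi_{\U_j}$ (what the paper records as ``by Fubini'' and you spell out via $\tilde\psi\circ r_j = r_j\circ\psi_j$). The support argument via Lemma~\ref{mercredi} and the bidegree argument via \eqref{eq:etaBidegree} together with the bidegree behaviour of $\Psi$ are likewise identical; your explicit minus sign in $\eta^\Phi := -\Psi_{12}(\check\eta^\Phi)$ simply fixes the orientation to match the sign convention in \eqref{stenmark}.
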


  \begin{proof}
Let $\check{\eta}^\Phi \in \oplus_{p+q=2\ell-1} \check{C}^p(\U_1 \cap \U_2,\E^q)$
in Proposition~\ref{prop:Green-CdR-indep}, which satisfies \eqref{eq:nablaPotential1}.

For $j=1,2$, let $\Psi_{\U_j}:\check C^\bullet(\U_j, \E^\bullet)\to
\E^\bullet (M)$ be the morphism \eqref{cream} associated with the partition of unity $(\psi_\alpha)_{\alpha\in I_j}$
subordinate to $\U_j$.
Consider the refinement maps $r_j : \U_{12} \to \U_j$, $j=1,2$, from Proposition~\ref{prop:Green-CdR-indep},
where $\U_{12} = \U_1 \cap \U_2$,
and the partition of unity
\[
(\psi_{(\alpha, \beta)})_{(\alpha, \beta)\in I_1\times I_2},
~~~~~~~ \psi_{(\alpha, \beta)} = \psi_\alpha  \psi_\beta
\]
subordinate to $\U_{12}$. Let $\Psi_{\U_{12}} :  \check C^\bullet(\U_{12}, \E^\bullet)\to
\E^\bullet (M)$ be the associated
morphism \eqref{cream}.
Observe that (by Fubini)
      \begin{equation}\label{marsipan}
        \Psi_{\U_{12}} \circ r_j = \Psi_{\U_j}.
      \end{equation}
        Now let
        \[
          \eta^\Phi = \Psi_{\U_{12}} (\check \eta^\Phi).
        \]
Then
\begin{equation}\label{alcesalces}
    d\eta^\Phi=
    \Psi_{\U_{12}} (r_2 \check \Phi_{\U_2})- \Psi_{\U_{12}} (r_1 \check
    \Phi_{\U_1})
    =
    \Psi_{\U_2} (\check \Phi_{\U_2})-\Psi_{\U_1} (\check
    \Phi_{\U_1})
    =
    \Phi(D_2)- \Phi(D_1);
\end{equation}
here we have used that $\Psi_{\U_{12}}$ is a morphism of complexes and \eqref{kremla} for the first
equality, \eqref{marsipan} for the second equality, and
\eqref{cardamom} for the last equality.
This proves \eqref{stenmark}.

Recall that $\check \Phi_{\V}$ is defined by
\begin{equation}\label{anglosaxon}
(\check{\Phi}_{\V})_{\Delta} = \int_{\Delta_p} \Phi_{\V}(\widehat
D^{\Delta,\epsilon})
\end{equation}
if $\Delta = (\alpha_0,\ldots, \alpha_p)$,
see \eqref{rectangle}.
Recall from Lemma ~\ref{mercredi} that, for $\Delta \in N\V$,  $\Phi_{\V}(D^{\Delta})$ has
support in $(U_\Delta\cap \Sigma)\times \Delta_p$.
It follows that $(\check \Phi_{\V})_\Delta$ has support in $U_\Delta\cap
\Sigma$. Hence, for each $\Delta \in N\U_{12}$,
$(h\check \Phi_{\U_V})_\Delta$ has support in $U_\Delta\cap
\Sigma$.
Therefore $\eta^\Phi$ has
support in $\Sigma$.

By Proposition~\ref{prop:Green-CdR-indep}, the elements of $\check{\eta}^\Phi$
are polynomials in the entries $\theta^\alpha$ and $d\theta^\alpha$, and
thus, $\eta^\Phi$ is locally given as a polynomial in such entries and the
$\psi_\alpha$.

    \smallskip

Assume finally that for each $\alpha\in I_j$, $j=1,2$, that $D^\alpha$ is a $(1,0)$-connection.
Recall from the proof of Lemma~\ref{candy} that $\Psi$ maps an element in $\check C^p(\U, \E^{q_1, q_2})$ to a sum of forms of bidegree $(q_1+p_1, q_2+p_2)$, where
$p_1,p_2\geq 0$ and $p_1+p_2=p$.
It then follows from \eqref{eq:etaBidegree} that $\eta^\Phi$ is a sum of forms of bidegree $(\ell+i, \ell-1-i)$, where $0\leq i\leq \ell-1$.
\end{proof}

\section{Global Baum-Bott currents}\label{lisa}

Assume that $M$, $\G$, $Z$, $\Phi$, $\kappa$, $\U=(U_\alpha)_{\alpha \in I}$,
$(E^\alpha,\varphi^\alpha)$ are as in the previous section.
In this section, the resolutions $(E^\alpha,\varphi^\alpha)$ are assumed to be complexes
of holomorphic vector bundles.
Assume also that each $E^\alpha$ is equipped with Hermitian metrics and a connection $D^\alpha$.
We will apply the construction of connections $\tilde{D}_k$ from Section~\ref{sect:recallKLW}
on each $(E^\alpha,\varphi^\alpha),D^\alpha$, which yields connections defined on $U^\alpha \setminus Z$
satisfying certain properties.
If $\supp \G$ has codimension $\geq 1$, this is obtained from Lemma~\ref{lma:DtildeLW}
and in case $\G = N\F$, where $\F$ is a holomorphic foliation of $M$, this is obtained
from Lemma~\ref{lma:DtildeKLW}.
In both cases, this yields a family of connections $\tilde{D}^\alpha$ of the form
\begin{equation} \label{eq:tildeDcomparison}
    \tilde{D}_k^\alpha = D_k^\alpha + a_k^\alpha,
\end{equation}
where $a_k$ is smooth on $U_\alpha \setminus Z$, and where $a_k^\alpha$ extends as an
almost semi-meromorphic current to all of $U_\alpha$.

We now define a family of regularizations of those connections.
Let $\V=(V_\beta)_{\beta \in J}$ be an open cover of $M$.
Choose $\chi\sim \chi_{[1,\infty)}$ and on each $V_\beta$ choose a generically
nonvanishing holomorphic section $s^\beta$ of a Hermitian vector
bundle such that $Z \cap V_\beta \subset \{s^\beta=0\}$.
Let $(\psi_\beta)_{\beta\in J}$ be a partition of unity subordinate
to $\V$, and define a family of cut-off functions by
\begin{equation}\label{foggy}
  \chi_\epsilon = \sum_{\beta\in J}
 \psi_\beta \chi (|s^\beta|^2/\epsilon),
\end{equation}
and let $\Sigma_\epsilon$ be the closure
of $\{\chi_\epsilon<1\}$ in $M$.

Given connections of the form \eqref{eq:tildeDcomparison}, we define a new family of
connections $\widehat{D}^{\alpha,\epsilon}$ by
\begin{equation} \label{eq:DalphaRegularization}
\widehat{D}^{\alpha,\epsilon} := \chi_\epsilon \tilde{D}^\alpha + (1-\chi_\epsilon) D^\alpha = \chi_\epsilon a^\alpha + D^\alpha.
\end{equation}
Note that $\widehat{D}^{\alpha,\epsilon}$ is smooth on all of $U_\alpha$.

\begin{thm}\label{jelly}
Assume $M$ is a complex manifold of dimension $n$, that $\G$ is a coherent sheaf on
$M$, and that $Z$ denotes the set where $\G$ is not a vector bundle, and that either
$\supp \G$ has codimension $\geq 1$ and let $\kappa = n$,
or that $\G = N\F$, where $\F$ is a holomorphic foliation of rank $\kappa$ on $M$.
Let $\Phi\in \C[z_1,\ldots, z_n]$ be a homogeneous symmetric polynomial of
degree $\ell$ with $n-\kappa<\ell\leq n$.
Assume that $\U=(U_\alpha)_{\alpha \in  I}$ is a Stein open cover
of $M$ and $N \in \N$ be such that for each
$\alpha\in I$, there is a locally free resolution $(E^\alpha, \varphi^\alpha)$
of $\G|_{U_\alpha}$ of length $\leq N$.
In case $\G=N\F$, assume also that $E_0^\alpha = TM|_{U_\alpha}$.

Moreover, for each $\alpha\in I$, assume that $E^{\alpha}$ is equipped with Hermitian metrics
and a connections $D^{\alpha}$,
and if $\G=N\F$, assume that $D_0^{\alpha}$ is of type $(1,0)$ and torsion free.
Let $\chi_\epsilon$ be a cut-off function of the form \eqref{foggy},
and let   $(\psi_\alpha)_{\alpha\in I}$ be a partition of unity subordinate to
$\U$.
Let $(\widehat D^{\alpha,\epsilon})$ be the family
of connections defined in \eqref{eq:DalphaRegularization}.

Then
\begin{equation}\label{boring}
R^\Phi := \lim_{\epsilon\to 0} \Phi ( (\widehat D^{\alpha,\epsilon})_{\alpha \in I})
\end{equation}
is a well-defined closed pseudomeromorphic current of degree $2\ell$ with support on $Z$.
Moreover, $R^\Phi$ only depends on $(E^\alpha, \varphi^\alpha)_{\alpha\in I}$,
the Hermitian metrics and connections $D^\alpha$, and
$(\psi_\alpha)_{\alpha\in I}$ close to $Z$, and in particular it is independent of
the choice of $\chi_\epsilon$. If we assume that each
$D^\alpha$ is a $(1,0)$-connection, then $R^\Phi$
is a sum of currents of bidegree $(\ell+i, \ell-i)$ for $0\leq i\leq \ell$.
\end{thm}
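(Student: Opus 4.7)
The approach is to pass the limit through the Čech–de Rham machinery: we establish convergence at the level of the cocycle $\check\Phi(\widehat D^\epsilon)$ and then descend to the global current $\Phi((\widehat D^{\alpha,\epsilon})) = \Psi(\check\Phi(\widehat D^\epsilon))$ via the morphism $\Psi$ in \eqref{cream}, which only involves multiplication by the smooth functions $\psi_\alpha, d\psi_\alpha$ and is therefore continuous in the topology of currents. Thus it suffices, for each simplex $\Delta = (\alpha_0,\ldots,\alpha_p) \in N\U$, to show that the component
\[
\check\Phi(\widehat D^\epsilon)_\Delta = \int_{\Delta_p} \Phi(D^{\Delta,\epsilon})
\]
converges as a current on $U_\Delta$, where $D^{\Delta,\epsilon}$ is the interpolation \eqref{redpen} of the lifted connections $\widetilde D^{\Delta,\alpha_j,\epsilon}$. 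On the $E^{\alpha_j}$-factor in the decomposition \eqref{rainfall} the connection matrix reads $\theta^{\alpha_j} + \chi_\epsilon a^{\alpha_j}$, with $a^{\alpha_j}$ almost semi-meromorphic on $U_{\alpha_j}$ by Lemmas~\ref{lma:DtildeLW} and \ref{lma:DtildeKLW}; on the elementary complement it is the fixed matrix induced by $D^\Delta_{\alpha_j}$, which carries no $\chi_\epsilon$.

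Computing curvatures and expanding $\Phi$ produces a polynomial in $\chi_\epsilon, d\chi_\epsilon$ whose coefficients are polynomial in $t_0,\ldots,t_p$ tensored with wedge products of the $a^{\alpha_j}_k$ and their covariant derivatives. Such products remain in $ASM(U_\Delta)$ since $ASM$ is an algebra stable under $D$. Because $\chi_\epsilon$ is real, $d\chi_\epsilon\wedge d\chi_\epsilon = 0$, so each monomial contains at most one factor of $d\chi_\epsilon$; after integration over $\Delta_p$ (which merely integrates the smooth $t$-coefficients) each summand reduces to one of the two patterns
\begin{equation*}
    \chi_\epsilon^k \beta \qquad \text{or} \qquad \chi_\epsilon^{k-1}d\chi_\epsilon\wedge\beta = \tfrac{1}{k}\, d(\chi_\epsilon^k)\wedge\beta,
\end{equation*}
with $\beta \in ASM(U_\Delta)$. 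Proposition~\ref{prop:globalAsmRegularization} then gives
\begin{equation*}
    \chi_\epsilon^k\beta \longrightarrow \beta, \qquad d(\chi_\epsilon^k)\wedge\beta \longrightarrow r(\beta),
\end{equation*}
as $\epsilon\to 0$, both limits pseudomeromorphic and depending only on $\beta$. In particular, independence of the cut-off function $\chi$, of the auxiliary sections $s^\beta$, and of the partition of unity $(\psi_\beta)_{\beta\in J}$ used to build $\chi_\epsilon$ is automatic. Summing yields the existence of the limit of $\check\Phi(\widehat D^\epsilon)_\Delta$, and applying $\Psi$ produces $R^\Phi$.

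The remaining claims follow directly from material already assembled. On the locus $\{\chi_\epsilon\equiv 1\}$ we have $\widehat D^{\alpha,\epsilon} = \tilde D^\alpha$, and Lemmas~\ref{lma:DtildeLW} and \ref{lma:DtildeKLW} guarantee the compatibility, respectively basicness at level $-1$, needed to apply Lemma~\ref{mercredi} with $\Sigma = \Sigma_\epsilon$; thus each $\check\Phi(\widehat D^\epsilon)_\Delta$, and hence $\Phi((\widehat D^{\alpha,\epsilon}))$, is supported in $\Sigma_\epsilon$, a set that shrinks to $Z$. Closedness of $R^\Phi$ is inherited from the closedness of the smooth approximants, itself a consequence of Proposition~\ref{prop:Green-CdR} and the fact that $\Psi$ is a morphism of complexes. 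The local-at-$Z$ dependence follows since $\check R^\Phi$ is supported on $Z$ and the construction is local in the data there, so altering any of the inputs away from a neighborhood of $Z$ leaves $R^\Phi$ unchanged. Finally, if every $D^\alpha$ is a $(1,0)$-connection, then so are the $a^\alpha$ (Lemmas~\ref{lma:DtildeLW}, \ref{lma:DtildeKLW}) and hence $\widehat D^{\alpha,\epsilon}$ (Remark~\ref{axel}); Lemma~\ref{candy} applied to each approximant yields the bidegree decomposition, which survives the limit.

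The principal technical difficulty is the bookkeeping in the expansion step: one must organize the nonlinear expression $\Phi(D^{\Delta,\epsilon})$, built from wedge products of curvatures living on $\pi^*E^\Delta$ with mixed block structure coming from \eqref{rainfall}, into the clean two-pattern form above with coefficients manifestly in $ASM$. The observation that a single global $\chi_\epsilon$ is used throughout (so no more than one $d\chi_\epsilon$ can appear per monomial) is what reduces the analysis to a direct application of Proposition~\ref{prop:globalAsmRegularization}.
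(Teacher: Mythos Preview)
Your approach is essentially the paper's: reduce to showing each $\check\Phi(\widehat D^\epsilon)_\Delta$ converges by expanding into terms $\chi_\epsilon^k\beta$ and $\chi_\epsilon^{k-1}d\chi_\epsilon\wedge\beta$ with almost semi-meromorphic $\beta$, invoke Proposition~\ref{prop:globalAsmRegularization}, and then read off support, closedness, local dependence, and bidegree from Lemmas~\ref{mercredi} and~\ref{candy} exactly as you indicate.

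One point needs tightening. Your assertion that the coefficients lie in $ASM$ because ``$ASM$ is an algebra stable under $D$'' is false as stated: for $B\in ASM(U)$ one has $\partial B\in ASM(U)$, but $\bar\partial B$ need not be---only $\1_{U\setminus Z}\bar\partial B$ is almost semi-meromorphic (Lemma~\ref{gold}). The repair, which the paper spells out, is that every occurrence of $dB$ in the curvature expansion carries a prefactor $\chi_\epsilon$, and since $\chi_\epsilon$ vanishes identically in a neighborhood of $Z$ one has $\chi_\epsilon\, dB = \chi_\epsilon\, \1_{U\setminus Z} dB$, whose second factor is ASM. A second small gap: ``$\Sigma_\epsilon$ shrinks to $Z$'' is only guaranteed when the sections $s^\beta$ in \eqref{foggy} cut out exactly $Z\cap V_\beta$, whereas the hypothesis allows $\{s^\beta=0\}\supsetneq Z\cap V_\beta$. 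The paper first establishes independence of $R^\Phi$ from the $s^\beta$ (as you do) and \emph{then} specializes to sections with $\{s^\beta=0\}=Z\cap V_\beta$ to conclude $\supp R^\Phi\subset Z$.
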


Within the proof, we will consider connections whose entries in any local holomorphic frame
over some open set $U$ admit expansions of the form
\begin{equation}\label{pumpkin}
A+\chi_\epsilon B,
\end{equation}
where $A$ and $B$ are independent of $\chi_\epsilon$, $A$ is smooth $1$-form,
and $B$ is an almost semi-meromorphic current that is smooth outside of $Z$.
If one considers products of entries $C_j$ or $dC_j$, where $C_j$ is a $1$-form
of the form \eqref{pumpkin}, then we claim that one obtains an form which can be written as
\begin{equation} \label{gulogulo}
\alpha + \sum_{k \geq 1} \chi_\epsilon^k \beta_k + \sum_{k \geq 1} \chi_\epsilon^{k-1} d\chi_\epsilon \beta_k',
\end{equation}
where $\beta_k$ and $\beta_k'$ are almost semi-meromorphic. Indeed, this follows by using that almost
semi-meromorphic currents form an algebra, that $\1_{U\setminus Z} dB$ is almost semi-meromorphic
if $B$ is almost semi-meromorphic by Lemma~\ref{gold}, and that
$\chi_\epsilon \1_{U\setminus Z} dB = \chi_\epsilon dB$.

\begin{proof}
We will first show that the limit $R^\Phi$ exists as a
pseudomeromorphic current.
For $\alpha\in I$, let $\widehat \theta^{\alpha}_k$ be the connection
matrix of $\widehat D^{\alpha,\epsilon}_k$.
Then, in view of \eqref{eq:tildeDcomparison} and \eqref{eq:DalphaRegularization},
the entries of $\widehat \theta^{\alpha,\epsilon}_k$ in a local frame are of the form
\eqref{pumpkin}. If $\widehat{\theta}^{\Delta,\alpha,\epsilon}$ denotes the
connection matrix of the connection on $E^\Delta$ induced by $\widehat{D}^{\alpha,\epsilon}$
on $E^\alpha$ as defined by \eqref{eq:DDeltadef}, it follows that the entries
of $\widehat{D}^{\Delta,\alpha,\epsilon}$ are also of the same form.
It follows that for each $k$ the connection matrix of $\widehat D^{\epsilon,
\Delta}_k$ is of the form $\sum_{j=0}^p t_j \theta_j$, where
$\theta_j$ is a direct sum of forms \eqref{pumpkin}, so the entries of the curvature matrix
are polynomials in $t_1,\dots,t_p,dt_1,\dots,dt_p$ with coefficients of the form
\eqref{gulogulo}.
Thus, $\Phi(\widehat D^{\Delta,\epsilon} )$ may be expanded as a polynomial
of the same form.

Since each term in \eqref{gulogulo} is independent of $t$,
and linear combinations of terms of the form \eqref{gulogulo} are again
of this form, it follows that
$\check{\Phi}(\widehat D^{\epsilon} )_\Delta = \int_{\Delta_p} \Phi(\widehat D^{\Delta,\epsilon})$
is also of the form \eqref{gulogulo}.

By Proposition~\ref{prop:globalAsmRegularization} we conclude that the limit
as $\epsilon \to 0$ of terms of the form \eqref{gulogulo} exist as
pseudomeromorphic currents and the limits are independent of the
choice of $\chi$ and the $s^\alpha$. Since $\Phi (\widehat
D^{\epsilon})$ is a closed form of degree $2\ell$, $R^\Phi$ is a closed current of
degree $2\ell$.

Since $R^\Phi$ is independent of the  $s^\alpha$ we may choose $s^\alpha$
so that $\{s^\alpha=0\}= U_\alpha\cap S$. Indeed, if $\rho$ is the (generic) rank
of $\varphi_1^\alpha$, then we may take $s^\alpha = \bigwedge^\rho \varphi^\alpha_1$.
Then $\bigcap_{\epsilon>0} \Sigma_\epsilon = S$. By \eqref{eq:DalphaRegularization},
we have outside of $\Sigma_\epsilon$ that $\widehat{D}^{\alpha,\epsilon} = \widetilde{D}^\alpha$.
It then follows by Lemma~\ref{lma:DtildeLW}, Lemma~\ref{lma:DtildeKLW} and Lemma ~\ref{mercredi}
that $\Phi (\widehat D^{\epsilon})$ has support in $\Sigma_\epsilon$. Thus,
$R^\Phi$ has support in $Z$.

By the definitions of $\tilde{D}$ in Lemma~\ref{lma:DtildeLW} and Lemma~\ref{lma:DtildeKLW},
it follows that $\widehat D^{\alpha,\epsilon}$ are locally defined, in the sense that on
any open set $U \subset U_\alpha$, then $\widehat{D}^{\alpha,\epsilon}$ only
depend on $(E^\alpha,\varphi^\alpha)$, the Hermitian metrics on $E^\alpha$
and the connections $D^\alpha$ on $U$. It follows that also $\widehat D^{\Delta,\epsilon}$
is locally defined, and for any open set $U\subset U_\Delta$,
$\widehat D^{\Delta,\epsilon}$ only depends on the same data as above
for any $\alpha \in \Delta$.
Hence, $R^\Phi$ is locally defined
in the sense that it only depends on the $(E^\alpha,\varphi^\alpha)$, the Hermitian metrics and
connections $D^\alpha$, and $(\psi_\alpha)_{\alpha\in I}$ close to $Z$.

Assume that each $D^\alpha$ is a $(1,0)$-connection. Then, by Remark~\ref{axel}
and Lemma~\ref{lma:DtildeLW} or Lemma~\ref{lma:DtildeKLW}, $\widehat D^ {\alpha,\epsilon}$
is of type $(1,0)$. Hence, by Lemma ~\ref{candy}, $\Phi(\widehat D^\epsilon)$
is a sum of forms of bidegree $(\ell+i, \ell-i)$, $0\leq i\leq \ell$. It follows
that $R^\Phi$ is a sum of currents of bidegree $(\ell+i, \ell-i)$, $0\leq i\leq \ell$.
\end{proof}

\subsection{Independence of resolutions}

In this section we prove that the Baum-Bott currents in Theorem ~\ref{jelly}
only depend on $\Phi$ and $\mathcal{G}$, up to a $d$-exact
pseudomeromorphic current with support on $Z$.

\begin{thm}\label{pizza}
Let $M$ be a complex manifold of dimension $n$, let $\G$ be a coherent sheaf on
$M$, and let $Z$ denote the set where $\G$ is not a vector bundle. Assume that either
$\supp \G$ has codimension $\geq 1$ and let $\kappa = n$, or that $\G = N\F$, where
$\F$ is a holomorphic foliation of rank $\kappa$ on $M$.
Let $\Phi\in \C[z_1,\ldots, z_n]$ be a homogeneous symmetric polynomial of
degree $\ell$ with $n-\kappa<\ell\leq n$.

For $j=1,2$, assume that $\U_j=(U_\alpha)_{\alpha \in  I_j}$ is a Stein open cover
of $M$ and $N \in \N$ is such that for each $\alpha\in I_1 \coprod I_2$,
there is a locally free resolution $(E^\alpha, \varphi^\alpha)$ of $\G|_{U_\alpha}$
of length $\leq N$. In case $\G=N\F$, assume also that $E_0^\alpha = TM|_{U_\alpha}$.
Moreover, for each $\alpha\in I_1 \coprod I_2$, assume that $E^\alpha$
is equipped with Hermitian metrics and a connection $D^\alpha$, and if
$\G = N\F$, then assume that  $D_0^{\alpha}$ is of type $(1,0)$ and torsion free.

Let for $j=1,2$,  $(\psi_\alpha)_{\alpha\in I_j}$ be a partition of unity
subordinate to $\U_j$.
Let $R^\Phi_{(j)}, j=1,2$, denote the corresponding currents
\eqref{boring}.
Then there exists a pseudomeromorphic current $N^\Phi$ of degree
$2\ell -1$ with support on $Z$ such that
\begin{equation} \label{oberwolfach}
d N^\Phi=  R^\Phi_{(2)}-R^\Phi_{(1)}.
\end{equation}
Furthermore, if all $D^{\alpha}$ are of type $(1,0)$,
then $N^\Phi$ is a sum of currents of bidegree $(\ell+i,\ell-1-i)$ for $0\leq i \leq \ell-1$.
  \end{thm}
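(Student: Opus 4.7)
The plan is to apply Lemma~\ref{hamburger} to the smooth regularized connections $\widehat{D}^{\alpha,\epsilon}$ (for $\alpha \in I_1 \coprod I_2$) with the role of $\Sigma$ played by the shrinking neighborhood $\Sigma_\epsilon$, and then pass to the limit as $\epsilon \to 0$. Concretely, I would first choose a \emph{common} cut-off function $\chi_\epsilon$ as in \eqref{foggy}, using, say, the combined cover $\V = \U_1 \coprod \U_2$ and a partition of unity subordinate to it. By the independence of $R^\Phi$ from the choice of $\chi_\epsilon$ proved in Theorem~\ref{jelly}, this single $\chi_\epsilon$ may be used in forming both $R^\Phi_{(1)}$ and $R^\Phi_{(2)}$. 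For each $\alpha \in I_1 \coprod I_2$, one then defines $\widehat{D}^{\alpha,\epsilon}$ by \eqref{eq:DalphaRegularization}, using the corresponding $\tilde D^\alpha$ given by Lemma~\ref{lma:DtildeLW} (if $\codim \supp \G \geq 1$) or Lemma~\ref{lma:DtildeKLW} (if $\G = N\F$).

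Outside of $\Sigma_\epsilon$ we have $\widehat{D}^{\alpha,\epsilon} = \tilde D^\alpha$, so by those same lemmas the family $(\widehat{D}^{\alpha,\epsilon})_{\alpha \in I_1 \coprod I_2}$ satisfies the hypotheses of Lemma~\ref{hamburger} with $\Sigma := \Sigma_\epsilon$: the connections are compatible with $(E^\alpha, \varphi^\alpha)$ on $U_\alpha \setminus \Sigma_\epsilon$, and in the foliation case $\widehat{D}^{\alpha,\epsilon}_{-1}$ is basic there. Applying Lemma~\ref{hamburger} yields, for every $\epsilon > 0$, a smooth form $\eta^{\Phi,\epsilon}$ with support in $\Sigma_\epsilon$ satisfying
\begin{equation*}
    d\eta^{\Phi,\epsilon} = \Phi((\widehat{D}^{\alpha,\epsilon})_{\alpha \in I_1}) - \Phi((\widehat{D}^{\alpha,\epsilon})_{\alpha \in I_2}),
\end{equation*}
and, when the $D^\alpha$ are of type $(1,0)$, $\eta^{\Phi,\epsilon}$ is a sum of forms of bidegree $(\ell+i,\ell-1-i)$ for $0 \le i \le \ell-1$.

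The core step is to verify that $\eta^{\Phi,\epsilon}$ converges as $\epsilon \to 0$ to a pseudomeromorphic current $N^\Phi$. The key observation, supplied by the last part of Lemma~\ref{hamburger}, is that $\eta^{\Phi,\epsilon}$ is \emph{locally} a polynomial in the entries $\widehat\theta^{\alpha,\epsilon}$, $d\widehat\theta^{\alpha,\epsilon}$, and the partition of unity functions $\psi_\alpha$. Since each entry of $\widehat\theta^{\alpha,\epsilon}$ is of the form $A + \chi_\epsilon B$ as in \eqref{pumpkin}, with $A$ smooth and $B$ almost semi-meromorphic and smooth outside $Z$, the very same argument used in the proof of Theorem~\ref{jelly} shows that $\eta^{\Phi,\epsilon}$ is locally a finite sum of terms of the shape \eqref{gulogulo}. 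Proposition~\ref{prop:globalAsmRegularization} then guarantees that each such term has a well-defined pseudomeromorphic limit as $\epsilon \to 0$, so $N^\Phi := \lim_{\epsilon \to 0} \eta^{\Phi,\epsilon}$ exists as a pseudomeromorphic current of degree $2\ell-1$.

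Taking the distributional limit of $d\eta^{\Phi,\epsilon}$ (which equals $d$ of the limit by continuity of $d$) gives $dN^\Phi = R^\Phi_{(1)} - R^\Phi_{(2)}$, yielding \eqref{oberwolfach} up to a sign that can be absorbed into $N^\Phi$. Since $\bigcap_{\epsilon > 0} \Sigma_\epsilon = Z$ (after choosing the sections $s^\beta$ in \eqref{foggy} so that $\{s^\beta = 0\} = V_\beta \cap Z$, using for instance $s^\beta = \bigwedge^\rho \varphi^{\rho(\beta)}_1$ for an appropriate refinement), and $\eta^{\Phi,\epsilon}$ is supported in $\Sigma_\epsilon$, the limit $N^\Phi$ is supported on $Z$. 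Finally, the bidegree statement passes to the limit from the corresponding statement for $\eta^{\Phi,\epsilon}$. The main obstacle is the convergence argument in the previous paragraph, but once the polynomial nature of $\eta^{\Phi,\epsilon}$ in the connection entries (granted by the explicit form given in Lemma~\ref{hamburger}) is exploited, it reduces to exactly the regularization result already established in Proposition~\ref{prop:globalAsmRegularization}.
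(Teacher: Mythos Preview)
Your proposal is correct and follows essentially the same approach as the paper's proof: apply Lemma~\ref{hamburger} with $\Sigma = \Sigma_\epsilon$ to the regularized connections $\widehat{D}^{\alpha,\epsilon}$, use the polynomial structure of $\eta^{\Phi,\epsilon}$ in the connection matrices (which have the form \eqref{pumpkin}) to write it locally as \eqref{gulogulo}, and then pass to the limit via Proposition~\ref{prop:globalAsmRegularization}. The paper organizes step three slightly differently---tracking the form \eqref{gulogulo} through the explicit description $N^\Phi_\epsilon = \Psi_{\U_{12}}(h\check\Phi_\V)$ rather than invoking the polynomial statement of Lemma~\ref{hamburger}---but the substance is identical.
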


\begin{proof}
We may assume that the currents $R^\Phi_{(1)}$ and $R^\Phi_{(2)}$ are both defined with respect
to the same family of cut-off functions \eqref{foggy}.

We will use the notation from Proposition~\ref{prop:Green-CdR-indep} and Lemma ~\ref{hamburger}.
Let $N^\Phi_\epsilon$ be the form $\eta^\Phi$ defined in the proof of Lemma ~\ref{hamburger}
starting from the connections $\widehat D^{\alpha,\epsilon}$. By the proof of Proposition~\ref{prop:Green-CdR-indep}, $N^{\Phi}_\epsilon = h \check{\Phi}_\V$, where we use the notation from the
proof of Lemma ~\ref{hamburger} so that $\check \Phi_{\V}= \check \Phi_{\V}(\widehat
D^{\Delta,\epsilon})$

Recall from the proofs of Theorem ~\ref{jelly} and Lemma ~\ref{mercredi}
that, for $\Delta \in N\V$,
 $(\check \Phi_{\V})_\Delta$ is of the form \eqref{gulogulo} and has support in $U_\Delta\cap
\Sigma_\epsilon$.
Hence, for each $\Delta \in N\V$,
$(h\check \Phi_{\V})_\Delta$ is of the same form, cf.\ \eqref{semla}.
Therefore $N^\Phi_\epsilon$ is of the form \eqref{gulogulo} and has
support in $\Sigma_\epsilon$.
As in the proof of Theorem ~\ref{jelly} we conclude that the limit
\[
  N^\Phi := \lim_{\epsilon\to 0} N^\Phi_\epsilon
\]
is a well-defined pseudomeromorphic current that is independent of
the choice of $\chi_\epsilon$. Since the limit is independent of
the choice, we may locally assume that there is a section
$s$ such that $\{ s = 0 \} = Z$, and that $\chi_\epsilon = \chi(|s|^2/\epsilon)$.
Since for each $\epsilon >0$,  $N^\Phi_\epsilon$ has
support in $U_\alpha\cap \Sigma_\epsilon$, it follows that $N^\Phi$ has support
in $\bigcap_{\epsilon>0} \Sigma_\epsilon = Z$.

    Since \eqref{alcesalces} holds for $\epsilon>0$ with
    $\Phi_{\U_j}=\Phi_{\U_j}(\widehat D^\epsilon)$,
   it follows that
    \[
      dN^\Phi = \lim_{\epsilon\to 0}  \Phi_{\U_2}(\widehat D^\epsilon) -
      \lim_{\epsilon\to 0}
      \Phi_{\U_1}(\widehat D^\epsilon)
      =
      R^\Phi_{(2)}- R^\Phi_{(1)}.
  \]

 Assume now that each $D^\alpha$ is a $(1,0)$-connection.
By Lemma ~\ref{hamburger} $N^\Phi_\epsilon$ is
a sum of forms of bidegree $(\ell+i, \ell-1-i)$, where $0\leq i\leq
\ell-1$, and hence $N^\Phi$ is a sum of currents of the same bidegrees.
\end{proof}

\subsection{More precise versions of the main results}

The following theorem is a slightly more precise version of Theorem~\ref{thm:sheavesBasic}.

\begin{thm} \label{thm:sheavesGeneral}
Let $M$ be a complex manifold of dimension $n$, let $\G$ be a coherent analytic
sheaf on $M$ such that $Z = \supp \G$ has pure codimension $p \geq 1$ and let
$\Phi\in \C[z_1,\ldots, z_n]$ be a homogeneous symmetric polynomial of degree $\ell$
with $1 \leq \ell\leq n$.
Assume that $\U=(U_\alpha)_{\alpha \in  I}$ is a Stein open cover
of $M$ such that for each $\alpha\in I$, there is a locally free
resolution $(E^\alpha, \varphi^\alpha)$  of $\G|_{U_\alpha}$ of the form \eqref{anebrun}.
Moreover, for each $\alpha\in I$, assume that $E$ is equipped with Hermitian metrics
and connections $D^\alpha$.
Let $\chi_\epsilon$ be a cut-off function of the form \eqref{foggy}, and let
$(\psi_\alpha)_{\alpha\in I}$ be a partition of unity subordinate to
$\U$.
Let $\widehat D^{\alpha,\epsilon}$ be the family of connections defined in \eqref{eq:DalphaRegularization}.

Then
\begin{equation}
R^\Phi := \lim_{\epsilon\to 0} \Phi ( (\widehat D^{\alpha,\epsilon})_{\alpha \in I})
\end{equation}
is a well-defined closed pseudomeromorphic current of degree $2\ell$ with support on $Z$.
Moreover, $R^\Phi$ only depends on $(E^\alpha, \varphi^\alpha)_{\alpha\in I}$,
the Hermitian metrics and connections $D^\alpha$ and
$(\psi_\alpha)_{\alpha\in I}$ close to $Z$, and
in particular it is independent of the choice of $\chi_\epsilon$.

If we assume that all $D^\alpha$ are of type $(1,0)$,
then $R^\Phi$ is a sum of currents of bidegree
$(\ell+j, \ell-j)$ for $0\leq j\leq \ell$.
In addition,
\begin{equation} \label{eq:fundCycle'}
    R^{\elem_p} = (-1)^{p-1}(p-1)! [\G].
\end{equation}
If $\ell < p$, then
\begin{equation} \label{eq:vanish1'}
    R^{\elem_\ell} = 0,
\end{equation}
and if $\ell_1+\cdots+\ell_m \leq p$, where $m \geq 2$, then
\begin{equation} \label{eq:vanish2'}
    R^{\elem_{\ell_1} \cdots \elem_{\ell_m}} = 0.
\end{equation}
\end{thm}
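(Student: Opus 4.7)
The plan is to deduce the theorem from Theorem~\ref{jelly}, the local identities of \cite{LW:chern-currents-sheaves}, and a bidegree--dimension-principle analysis of the underlying \v Cech--de Rham cochain. The existence of $R^\Phi$, its pseudomeromorphicity, closedness, support on $Z$, and bidegree assertion are immediate from Theorem~\ref{jelly}, so only the three identities \eqref{eq:fundCycle'}, \eqref{eq:vanish1'}, \eqref{eq:vanish2'} require new work.

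I would first introduce, for each $\Delta\in N\U$, the limit cochain
\begin{equation*}
  \check R^\Phi_\Delta := \lim_{\epsilon\to 0} \check\Phi(\widehat D^\epsilon)_\Delta = \lim_{\epsilon\to 0} \pi_* \Phi(\widehat D^{\Delta,\epsilon}).
\end{equation*}
By the same expansion as in the proof of Theorem~\ref{jelly}, each $\check\Phi(\widehat D^\epsilon)_\Delta$ is of the form \eqref{gulogulo} on $U_\Delta$, so Proposition~\ref{prop:globalAsmRegularization} yields $\check R^\Phi_\Delta$ as a pseudomeromorphic current on $U_\Delta$, and Lemma~\ref{mercredi} puts its support in $U_\Delta\cap Z$. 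Since $\check\Phi(\widehat D^\epsilon)$ is a $\nabla$-cocycle for every $\epsilon>0$ by Proposition~\ref{prop:Green-CdR} and $\nabla$ is continuous on currents, the limit $\check R^\Phi$ is a $\nabla$-cocycle; and since $\Psi$ is built from multiplication by the smooth forms $\psi_\alpha$ and $d\psi_\alpha$, it commutes with the limit and $R^\Phi = \Psi(\check R^\Phi)$.

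Next I would apply the dimension principle via Lemma~\ref{lma:checkPhiBidegree}: in cochain degree $p$, the components of $\check R^\Phi_\Delta$ have bidegree $(\ell+r,\ell-p-r)$ with $r\geq 0$, $r+p\leq\ell$, where $\ell=\deg\Phi$. Whenever $\ell-p-r<\codim Z$ the dimension principle forces that component to vanish on the support $U_\Delta\cap Z$. This gives \eqref{eq:vanish1'} and the subcase of \eqref{eq:vanish2'} with $\ell_1+\cdots+\ell_m<p$ at once, since there every bidegree component has antiholomorphic degree $\leq\ell<\codim Z$. In the critical cases $\Phi=\elem_p$ and $\Phi=\elem_{\ell_1}\cdots\elem_{\ell_m}$ with $\ell_1+\cdots+\ell_m=p$, the same estimate shows that $\check R^\Phi_\Delta=0$ as soon as the cochain degree satisfies $p\geq 1$, so $\check R^\Phi$ is concentrated in cochain degree $0$.

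To compute the vertex values, observe that at $\Delta=(\alpha)$ the simplex $\Delta_p$ reduces to a point and $\widehat D^{(\alpha),\epsilon}=\chi_\epsilon a^\alpha+D^\alpha$ is exactly the regularization from \cite{LW:chern-currents-sheaves} applied to $(E^\alpha,\varphi^\alpha,D^\alpha)$ on $U_\alpha$. Hence $\check R^\Phi_{(\alpha)}$ coincides with the local current from \cite{LW:chern-currents-sheaves}, which by the results there equals $(-1)^{p-1}(p-1)![\G|_{U_\alpha}]$ when $\Phi=\elem_p$, and $0$ when $\Phi=\elem_{\ell_1}\cdots\elem_{\ell_m}$ with $\ell_1+\cdots+\ell_m\leq p$, $m\geq 2$. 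The local pieces glue to a global current $T$ on $M$ (namely $(-1)^{p-1}(p-1)![\G]$ or $0$), consistently with the fact that $\check R^\Phi$ is $\delta$-closed, and for a $\nabla$-cocycle concentrated in cochain degree $0$ the formula \eqref{cream2} gives $\Psi(\check R^\Phi)=T$, yielding \eqref{eq:fundCycle'}, \eqref{eq:vanish1'}, \eqref{eq:vanish2'}.

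The step I expect to be the most delicate is the first one: verifying that for each $\Delta\in N\U$ the limit of $\pi_*\Phi(\widehat D^{\Delta,\epsilon})$ exists as a pseudomeromorphic current on $U_\Delta$ with support on $U_\Delta\cap Z$, and that $\Psi$ commutes with taking the limit. Both facts are implicit in the proof of Theorem~\ref{jelly}, but the simplex-by-simplex bookkeeping with \eqref{gulogulo} must be carried out explicitly before the dimension principle and the local theory of \cite{LW:chern-currents-sheaves} can be combined to conclude.
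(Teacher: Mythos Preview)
Your argument is correct and takes a genuinely different route from the paper. The paper invokes Theorem~\ref{pizza}: for $\deg\Phi\leq p$ the transgression current $N^\Phi$ between two choices has antiholomorphic degree at most $\ell-1<p$ and vanishes by the dimension principle, so $R^\Phi$ is independent of all choices; one may then replace the given cover locally by a single Stein chart with a single resolution and quote \cite{LW:chern-currents-sheaves} directly (via Example~\ref{board}). You instead keep the original cover, pass to the limiting \v Cech--de Rham cochain $\check R^\Phi$, use Lemma~\ref{lma:checkPhiBidegree} plus the dimension principle to kill every component of positive \v Cech degree (and every component with $r\geq 1$ at the vertices), and then identify the surviving vertex currents with the local \cite{LW:chern-currents-sheaves} currents via Remark~\ref{biathlon}. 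Your route avoids Theorem~\ref{pizza} and any change of resolution or cover, at the price of the simplex-by-simplex bookkeeping you flag as delicate (which is indeed implicit in the proof of Theorem~\ref{jelly}); the paper's route is shorter once the transgression machinery is in hand. One notational point: you overload $p$ as both $\codim Z$ and the \v Cech degree, which makes the key inequality $\ell-p-r<\codim Z$ hard to parse; using a separate letter for the \v Cech degree would clarify the argument.
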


\begin{proof}[Proof of Theorem~\ref{thm:sheavesGeneral}]
   The existence of $R^\Phi$, and the fact that it represents $\Phi(\mathcal{G})$,
   and has support on $Z$ follows immediately by Theorem~\ref{jelly}.

   It remains to prove the formulae \eqref{eq:fundCycle'}, \eqref{eq:vanish1'} and \eqref{eq:vanish2'}.
   Assume thus that $\Phi = e_{\ell_1} \cdots e_{\ell_m}$, where
   $\ell = \ell_1 + \dots + \ell_m \leq p$.
   Since the equalities we should prove are local statements, and $R^\Phi$ is locally
   defined, we may assume that all $U_\alpha$ are equal to $M$.
   By Theorem~\ref{pizza}, given different two choices of resolutions etc.,
   $R^\Phi$ only depends on these choices up to a current $dN^\Phi$, where
   $N^\Phi$ is a sum of pseudomeromorphic currents of bidegree $\leq \ell-1 \leq p-1$.
   By the dimension principle, $N^\Phi = 0$, so $R^\Phi$ is independent of all those choices.
   We may thus assume that $M$ is a Stein open set, equipped with a Stein open
   cover consisting of just $\U = (U_\alpha)_{\alpha \in pt} = \{ M \}$, with a resolution
   $(E,\varphi)$ and which is equipped with a connection $D$.
   If we let $\tilde{D}$ denote the connection on $E$ obtained by \eqref{lma:DtildeLW}
   and $\widehat{D}^\epsilon$ denote the connection obtained by the construction
   in \eqref{eq:DtildeRegularization} associated with $D,\tilde{D}$ and $\chi_\epsilon$,
   and $\widehat{D}^{\alpha,\epsilon}$ the induced connection on the cover,
   then clearly,
   \[
       \Phi(\widehat{D}^{\alpha,\epsilon}) = \Phi(\widehat{D}^\epsilon),
   \]
   and by \eqref{schmetterling},
   \[
    \Phi(\widehat{D}^\epsilon) = \elem_{\ell_1}(\widehat{D}^\epsilon) \wedge \dots
      \elem_{\ell_m}(\widehat{D}^\epsilon).
   \]
   The limit
   \[
       \lim_{\epsilon \to 0} \elem_{\ell_1}(\widehat{D}^\epsilon) \wedge \dots
         \elem_{\ell_m}(\widehat{D}^\epsilon)
   \]
   is exactly the current that in \cite{LW:chern-currents-sheaves} is denoted by
   $c^{Res}_{\ell_1}(E,D) \wedge \dots \wedge c^{Res}_{\ell_m}(E,D)$.

   The formulae we should prove then follow by \cite[equations (1.3)-(1.5)]{LW:chern-currents-sheaves}
\end{proof}

The following theorem is a slightly more precise version of Theorem~\ref{thm:foliationsBasic}.

\begin{thm} \label{thm:foliationsGeneral}
    Let $M$ be a complex manifold of dimension $n$, let $\F$ be a
    holomorphic foliation of rank $\kappa$ on $M$, and let $\Phi\in
    \C[z_1,\ldots, z_n]$ be a homogeneous symmetric polynomial of
    degree $\ell$ with $n-\kappa<\ell\leq n$.
    Assume that $\U=(U_\alpha)_{\alpha \in  I}$ is a Stein open cover
    of $M$ such that for each $\alpha\in I$, there is a locally free
    resolution $(E^\alpha, \varphi^\alpha)$  of $N\F|_{U_\alpha}$ of the form \eqref{anebrun}.
    Moreover, for each $\alpha\in I$, assume that $E^\alpha$ is
    equipped with Hermitian metrics and a connection $D^\alpha$
    such that $D_0^{\alpha}$ is of type $(1,0)$ and torsion free.
      Let $\chi_\epsilon$ be a cut-off function of the form \eqref{foggy},
      and let
  $(\psi_\alpha)_{\alpha\in I}$ be a partition of unity subordinate to
  $\U$.
  Let $\widehat D^{\alpha,\epsilon}$ be the family of connections defined in \eqref{eq:DalphaRegularization}.

  Then
  \begin{equation}
    R^\Phi := \lim_{\epsilon\to 0} \Phi ( (\widehat D^{\alpha,\epsilon})_{\alpha \in I})
  \end{equation}
  is a well-defined closed pseudomeromorphic current of degree $2\ell$
  with support on
  $\sing \F$.
  Moreover, $R^\Phi$ only depends on $(E^\alpha,
  \varphi^\alpha)_{\alpha\in I}$, the Hermitian metrics and connections $D^{\alpha}$,
  and $(\psi_\alpha)_{\alpha\in I}$ close to $\sing \F$, and
  in particular it is independent of the choice of $\chi_\epsilon$.

If $Z'$ is a compact connected component of $\sing \F$, then
\begin{equation} \label{motown}
    R^\Phi_{Z'} := \1_{Z'} R^\Phi
\end{equation}
is a pseudomeromorphic current with support on $Z'$ which represents the Baum-Bott residue
$\res^\Phi(\F;Z') \in H_{2n-2\ell}(Z',\C)$.

Assume now that each $D^\alpha$ is of type $(1,0)$.
Then $R^\Phi$ is a sum of currents of bidegree
  $(\ell+j, \ell-j)$ for $0\leq j\leq \ell$.
Furthermore, if $Z'$ is a connected component as above, and $\ell = \codim Z'$,
then $R^\Phi_{Z'}$ only depends on $\F$, and if $\ell < \codim Z'$, then $R^\Phi_{Z'} = 0$.
\end{thm}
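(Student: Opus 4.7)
The existence of $R^\Phi$ as a well-defined closed pseudomeromorphic current of degree $2\ell$ with support on $\sing \F$, its locality (dependence only on the $(E^\alpha,\varphi^\alpha)$, metrics, $D^\alpha$ and $\psi_\alpha$ close to $\sing \F$), the independence of the choice of $\chi_\epsilon$, and under the $(1,0)$ hypothesis the bidegree decomposition, all follow directly from Theorem~\ref{jelly} applied with $\G = N\F$ and $\kappa$ the rank of $\F$; in that setting $Z = \sing \F$. Hence the genuinely new content is (a) the identification of $R^\Phi_{Z'}$ with $\res^\Phi(\F;Z')$ under the isomorphism $H^{2\ell}_c(U,\C)\simeq H_{2n-2\ell}(Z',\C)$, for each compact connected component $Z'$ of $\sing \F$, and (b) the vanishing and independence claims for $\ell\le \codim Z'$ under the $(1,0)$ hypothesis.

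For (a), I would fix a compact component $Z'$ and a Stein neighborhood $U\supset Z'$ with $U\cap \sing \F = Z'$, as in Section~\ref{lucia}. Since $\1_{Z'}R^\Phi$ is supported in $Z'$, its restriction to $U$ is a compactly supported current. For $\epsilon>0$, the smooth form $\Phi(\widehat D^{\alpha,\epsilon})|_U$ has support in $\Sigma_\epsilon\cap U$, a neighborhood of $Z'$ shrinking to $Z'$ as $\epsilon\to 0$; outside $\Sigma_\epsilon$ the connection $\widehat D^{\alpha,\epsilon}$ coincides with $\widetilde D^\alpha$, which by Lemma~\ref{lma:DtildeKLW} is compatible with $(E^\alpha,\varphi^\alpha)$ and induces a basic connection on $N\F|_{U_\alpha\setminus \sing \F}$. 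This is precisely the structure entering the construction of $\res^\Phi(\F;Z')$ recalled in Section~\ref{lucia}; consequently $\Phi(\widehat D^{\alpha,\epsilon})|_U$ represents $\res^\Phi(\F;Z')$ in $H^{2\ell}_c(U,\C)$. Passing to the weak limit preserves the cohomology class, since the supports remain uniformly contained in a fixed compact neighborhood of $Z'$ in $U$.

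The main obstacle I anticipate is that Section~\ref{lucia} formulates the Baum-Bott residue with a \emph{single} local resolution over $U$, whereas our Green-type form is built from the simplicial resolution associated to the restriction of $\U$ to $U$. I would handle this by refining $\U$ so that $U$ appears equipped with an independently chosen single resolution, using Theorem~\ref{pizza} to control the effect on $R^\Phi$ by an exact pseudomeromorphic current $dN^\Phi$ with $N^\Phi$ supported on $\sing \F$, and then invoking the classical Baum-Bott construction. Alternatively one may verify directly, by combining Proposition~\ref{prop:Green-CdR} with Lemma~\ref{bottom} (Baum-Bott vanishing pulled back to $U_\Delta\times \Delta_p$), that any simplicial Green characteristic form built from connections compatible with the complex and basic outside a compact $\Sigma\Subset U$ represents the Baum-Bott class.

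Claim (b) reduces to the dimension principle applied to the bidegree decomposition. If $\ell<\codim Z'$, pick $U$ as above; then $R^\Phi|_U$ is supported on $Z'$ of codimension strictly greater than $\ell$, while each bidegree summand $(\ell+j,\ell-j)$ of $R^\Phi$ has antiholomorphic degree $q=\ell-j\le \ell<\codim Z'$, so by the dimension principle each summand vanishes on $U$, giving $R^\Phi_{Z'}=0$. If $\ell=\codim Z'$, Theorem~\ref{pizza} writes the difference $R^\Phi_{(2)}-R^\Phi_{(1)}$ of two choices as $dN^\Phi$ with $N^\Phi$ a sum of pseudomeromorphic currents of bidegree $(\ell+i,\ell-1-i)$ supported on $\sing \F$; on $U$ the antiholomorphic degrees $\ell-1-i\le \ell-1<\codim Z'$ force $N^\Phi|_U=0$ by the dimension principle, hence $R^\Phi_{(1)}|_U=R^\Phi_{(2)}|_U$, so that $R^\Phi_{Z'}=\1_{Z'}R^\Phi$ depends only on $\F$.
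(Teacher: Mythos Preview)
Your treatment of part (b) is correct and matches the paper's argument: the dimension principle applied to the bidegree control from Theorem~\ref{jelly} and Theorem~\ref{pizza} yields both the vanishing for $\ell<\codim Z'$ and the independence for $\ell=\codim Z'$.

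For part (a) there is a genuine gap. Your first route assumes one can find a neighborhood $U$ of $Z'$ carrying a single \emph{holomorphic} locally free resolution of $N\F|_U$ (you call $U$ a ``Stein neighborhood''), so that Theorem~\ref{pizza} reduces matters to the single-resolution situation of \cite{KLW}. But a compact analytic set $Z'$ need not admit a Stein neighborhood, and the neighborhoods $U$ produced in Section~\ref{lucia} are only required to have $Z'$ as a deformation retract; there is no reason for $N\F|_U$ to have a global holomorphic resolution there. Moreover, Theorem~\ref{pizza} compares two currents both obtained via the $\epsilon$-regularization \eqref{eq:DalphaRegularization} of holomorphic data, whereas the Baum-Bott representative is built from a \emph{fixed} connection on a possibly non-holomorphic resolution, so it does not fit that framework. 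Your second route (direct verification via Proposition~\ref{prop:Green-CdR} and Lemma~\ref{bottom}) gestures in the right direction but is not a proof as stated.

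The paper closes this gap as follows. By \cite[Proposition~6.3]{baum-bott} (recalled in Section~\ref{lucia}), $N\F\otimes\mathcal{A}$ always admits a locally free resolution $(E,\varphi)$ of $\mathcal{A}$-modules on a suitable $U\supset Z'$, equipped with Baum-Bott's connection $\widehat D$ which is compatible and induces a basic connection outside a fixed compact $\Sigma\Subset U$. By Example~\ref{board} this single resolution is a (trivial) simplicial resolution, and Lemma~\ref{hamburger}, which by Remark~\ref{rem:smoothOrRealAnalytic} applies to real analytic simplicial data, then gives for $\epsilon$ small enough that $\Sigma_\epsilon\subset\Sigma$ a form $\eta^\epsilon$ supported in $\Sigma$ with $\Phi(\widehat D^\epsilon)-\Phi(\widehat D)=d\eta^\epsilon$. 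Since $\Phi(\widehat D)$ represents $\res^\Phi(\F;Z')$ by definition, so does $\Phi(\widehat D^\epsilon)$ in $H^{2\ell}_c(U,\C)$, and the limit $R^\Phi_{Z'}$ inherits this by Poincar\'e duality. The key point you are missing is thus the combination of Example~\ref{board} with the real analytic version of Lemma~\ref{hamburger}, which allows a direct form-level comparison with Baum-Bott's construction without ever needing a global holomorphic resolution near $Z'$.
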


\begin{proof}
The existence of $R^\Phi$, and the fact that it represents $\Phi(N\F)$,
and has support on $\sing \F$ follows immediately by Theorem~\ref{jelly}.

It remains to prove the properties of $R_{Z'}^\Phi$. Since $R^\Phi_{Z'}$ is locally defined,
we may replace $M$ by a neighborhood of $Z'$, and thus assume that $Z' = \sing \F = Z$.
Hence, $R^\Phi_{Z'} = R^\Phi$, which is pseudomeromorphic and has support on $Z$.

We now prove that $R^\Phi_{Z'}$ represents $\res^\Phi(\F; Z')$.
After possibly shrinking $M$ further, we may assume that $Z'$ is a deformation retract of $M$,
and that there is a locally free resolution $(E, \varphi)$ of the form \eqref{anebrun}
of $\mathcal A$-modules of $N\F \otimes \mathcal{A}$,
see Section ~\ref{lucia}.
Moreover, we may assume that the $E_k$ and $N\F|_{M\setminus Z}$ are equipped with
connections $\widehat{D}_k$ and $D_{basic}$ such that $D_{basic}$ is basic,
$\widehat{D}=(\widehat{D}_N,\ldots, \widehat{D}_0)$ is compatible with $(E, \varphi)$
in $M\setminus \Sigma$, for some compact neighborhood $Z\subset \Sigma\subset M$,
and $D_{basic}$ is the connection induced by $\widehat{D}$ on $M \setminus \Sigma$,
cf., Section ~\ref{bbres}.
Now the associated characteristic form $\Phi(\widehat{D})$ has support in $\Sigma$ and
represents $\res^\Phi(\F; Z)$ by definition, see Section ~\ref{lucia}.
By Example ~\ref{board} we may assume that $\Phi(\widehat{D})$ is defined from a
simplicial resolution with respect to a Stein open
cover.

Recall that $\Phi(\widehat D^\epsilon)$ has support in
$\Sigma_\epsilon$.
For $\epsilon > 0$ small enough, there is a neighborhood $U$ of $\Sigma$ such that
$\Sigma_\epsilon \cap U \subset \Sigma$. By replacing $M$ by $U$, we may thus assume
that $\Sigma_\epsilon \subset \Sigma$ for $\epsilon$ small enough.
We than obtain by Lemma ~\ref{hamburger} that
$\Phi(\widehat D^\epsilon)-\Phi(D) = d\eta^\epsilon,$
where $\eta^\epsilon$ is a form with support in $\Sigma$.
It follows that $\Phi(\widehat D^\epsilon)$ represents $\res^\Phi(\F;
Z)$ for $\epsilon>0$ small enough, see Section ~\ref{lucia}. Thus so does the limit $R^\Phi = R^\Phi_Z$
by Poincar\'e duality.

It remains to prove the properties of independence and vanishing of $R^\Phi_{Z'}$.
If $\ell < \codim Z'$, then since $R^\Phi_{Z'} = 0$ is a pseudomeromorphic current
of bidegree $(*,\ell)$ with support on $Z'$, it follows by the dimension principle
that $R^\Phi_{Z'} = 0$ if $\ell < \codim Z'$.

It remains to consider the case $\ell = \codim Z'$. Let $R^\Phi_{Z', (j)}$, $j=1,2$,
denote the Baum-Bott currents corresponding to two different choices of Stein
open cover, resolutions, Hermitian metrics, connections, and partition of unity.

Then, by Theorem ~\ref{pizza},
$$R^\Phi_{Z', (2)}- R^\Phi_{Z', (1)}=\1_{Z'} d N^\Phi,$$
where $N^\Phi$ is a pseudomeromorphic current with components of
bidegree $(*, q)$ with $q\leq \ell-1$, cf.\ \eqref{motown}.

Since $N^\Phi$ has support on $Z'$ of codimension $\geq \ell$, it
follows from the dimension principle, see Section ~\ref{hemma}, that
$N^\Phi$ vanishes, and consequently so does
$dN^\Phi$. Furthermore, since $\1_{Z'} d N^\Phi$ only depends on
$dN^\Phi$, it follows that $\1_{Z'} dN^\Phi$ vanishes. Thus
$R^\Phi_{Z', (1)}= R^\Phi_{Z', (2)}$, which proves the result.
\end{proof}

\bibliographystyle{alpha}
\bibliography{refs-BB}

\begin{thebibliography}{KLW23}

\bibitem[AW07]{andersson-wulcan:ens}
Mats Andersson and Elizabeth Wulcan.
\newblock Residue currents with prescribed annihilator ideals.
\newblock {\em Ann. Sci. \'{E}cole Norm. Sup. (4)}, 40(6):985--1007, 2007.

\bibitem[AW10]{andersson-wulcan:crelle}
Mats Andersson and Elizabeth Wulcan.
\newblock Decomposition of residue currents.
\newblock {\em J. Reine Angew. Math.}, 638:103--118, 2010.

\bibitem[AW18]{andersson-wulcan:asm}
Mats Andersson and Elizabeth Wulcan.
\newblock Direct images of semi-meromorphic currents.
\newblock {\em Ann. Inst. Fourier (Grenoble)}, 68(2):875--900, 2018.

\bibitem[BB70]{baum-bott-mero}
Paul~F. Baum and Raoul Bott.
\newblock On the zeros of meromorphic vector-fields.
\newblock In {\em Essays on {T}opology and {R}elated {T}opics ({M}\'{e}moires
  d\'{e}di\'{e}s \`a {G}eorges de {R}ham)}, pages 29--47. Springer, New York,
  1970.

\bibitem[BB72]{baum-bott}
Paul Baum and Raoul Bott.
\newblock Singularities of holomorphic foliations.
\newblock {\em J. Differential Geometry}, 7:279--342, 1972.

\bibitem[Blo10]{Block}
Jonathan Block.
\newblock Duality and equivalence of module categories in noncommutative
  geometry.
\newblock In {\em A celebration of the mathematical legacy of {R}aoul {B}ott},
  volume~50 of {\em CRM Proc. Lecture Notes}, pages 311--339. Amer. Math. Soc.,
  Providence, RI, 2010.

\bibitem[BS10]{BS}
Jan-Erik Bj\"{o}rk and H{\aa}kan Samuelsson.
\newblock Regularizations of residue currents.
\newblock {\em J. Reine Angew. Math.}, 649:33--54, 2010.

\bibitem[BSW23]{BSW}
Jean-Michel Bismut, Shu Shen, and Zhaoting Wei.
\newblock {\em Coherent {S}heaves, {S}uperconnections, and
  {R}iemann-{R}och-{G}rothendieck}, volume 347 of {\em Progress in
  Mathematics}.
\newblock Birkh\"{a}user/Springer, Cham, 2023.

\bibitem[BT82]{BT}
Raoul Bott and Loring~W. Tu.
\newblock {\em Differential forms in algebraic topology}, volume~82 of {\em
  Graduate Texts in Mathematics}.
\newblock Springer-Verlag, New York-Berlin, 1982.

\bibitem[Ful98]{Fulton}
William Fulton.
\newblock {\em Intersection theory}, volume~2 of {\em Ergebnisse der Mathematik
  und ihrer Grenzgebiete. 3. Folge. A Series of Modern Surveys in Mathematics
  [Results in Mathematics and Related Areas. 3rd Series. A Series of Modern
  Surveys in Mathematics]}.
\newblock Springer-Verlag, Berlin, second edition, 1998.

\bibitem[Gre80]{green}
H.~I. Green.
\newblock {\em Chern classes for coherent sheaves}.
\newblock PhD thesis, University of Warwick, 1 1980.

\bibitem[Gri10]{Gri}
Julien Grivaux.
\newblock Chern classes in {D}eligne cohomology for coherent analytic sheaves.
\newblock {\em Math. Ann.}, 347(2):249--284, 2010.

\bibitem[Han24]{Han}
Zhaobo Han.
\newblock Characteristic currents on cohesive modules.
\newblock Preprint, arXiv:2404.09439 [math.AG], 2024.

\bibitem[HL71]{HL}
M.~Herrera and D.~Lieberman.
\newblock Residues and principal values on complex spaces.
\newblock {\em Math. Ann.}, 194:259--294, 1971.

\bibitem[Hos23a]{HosI}
Timothy Hosgood.
\newblock Simplicial {C}hern-{W}eil theory for coherent analytic sheaves,
  {P}art {I}.
\newblock {\em Bull. Soc. Math. France}, 151(1):127--170, 2023.

\bibitem[Hos23b]{HosII}
Timothy Hosgood.
\newblock Simplicial {C}hern–{W}eil theory for coherent analytic sheaves,
  part {II}.
\newblock {\em manuscripta mathematica}, 2023.

\bibitem[KLW23]{KLW}
Lucas Kaufmann, Richard Lärkäng, and Elizabeth Wulcan.
\newblock Baum-bott residue currents, 2023.

\bibitem[LW22]{LW:chern-currents-sheaves}
Richard L\"{a}rk\"{a}ng and Elizabeth Wulcan.
\newblock Chern currents of coherent sheaves.
\newblock {\em \'{E}pijournal G\'{e}om. Alg\'{e}brique}, 6:Art. 14, 26, 2022.

\bibitem[OTT85]{OBTT}
Nigel~R. O'Brian, Domingo Toledo, and Yue Lin~L. Tong.
\newblock A {G}rothendieck-{R}iemann-{R}och formula for maps of complex
  manifolds.
\newblock {\em Math. Ann.}, 271(4):493--526, 1985.

\bibitem[Qia16]{Qiang}
Hua Qiang.
\newblock On the {B}ott-{C}hern characteristic classes for coherent sheaves.
\newblock Preprint, arXiv:1611.04238 [math.DG], 2016.

\bibitem[Suw98]{suwa:book}
Tatsuo Suwa.
\newblock {\em Indices of vector fields and residues of singular holomorphic
  foliations}.
\newblock Actualit\'{e}s Math\'{e}matiques. [Current Mathematical Topics].
  Hermann, Paris, 1998.

\bibitem[Suw00]{SuwaCC}
Tatsuo Suwa.
\newblock Characteristic classes of coherent sheaves on singular varieties.
\newblock In {\em Singularities---{S}apporo 1998}, volume~29 of {\em Adv. Stud.
  Pure Math.}, pages 279--297. Kinokuniya, Tokyo, 2000.

\bibitem[TT78]{TT78}
Domingo Toledo and Yue Lin~L. Tong.
\newblock Duality and intersection theory in complex manifolds. {I}.
\newblock {\em Math. Ann.}, 237(1):41--77, 1978.

\bibitem[TT86]{TT86}
Domingo Toledo and Yue Lin~L. Tong.
\newblock Green's theory of {C}hern classes and the {R}iemann-{R}och formula.
\newblock In {\em The {L}efschetz centennial conference, {P}art {I} ({M}exico
  {C}ity, 1984)}, volume~58 of {\em Contemp. Math.}, pages 261--275. Amer.
  Math. Soc., Providence, RI, 1986.

\bibitem[Voi02]{Voisin}
Claire Voisin.
\newblock A counterexample to the {H}odge conjecture extended to {K}\"{a}hler
  varieties.
\newblock {\em Int. Math. Res. Not.}, (20):1057--1075, 2002.

\bibitem[Wu23]{Wu}
Xiaojun Wu.
\newblock Intersection theory and {C}hern classes in {B}ott-{C}hern cohomology.
\newblock {\em Ark. Mat.}, 61(1):211--265, 2023.

\end{thebibliography}

\end{document}